\documentclass[12pt]{amsart}

\usepackage{amsmath}
\usepackage{amsthm}
\usepackage{amssymb}
\usepackage{amscd}
\usepackage{amsfonts}
\usepackage{amsbsy}
\usepackage[noadjust]{cite} 
\usepackage{epsfig,afterpage}
\usepackage{paralist}
\usepackage{psfrag}
\usepackage{mathrsfs}       
\usepackage{verbatim}

\newtheorem {theorem} {Theorem}
\newtheorem {definition} [theorem]{Definition}
\newtheorem {proposition} [theorem]{Proposition}
\newtheorem {corollary} [theorem]{Corollary}
\newtheorem {lemma}  [theorem]{Lemma}
\newtheorem {remark} [theorem]{\sc Remark}
\newtheorem {conjecture} [theorem]{\sc Conjecture}

\usepackage{xcolor}
\usepackage{float}

\usepackage{lpic}
\newcommand{\ve}{\varepsilon}

\title[Monodromic Brunella-Miari fields]{Monodromic singularities of Brunella-Miari vector fields with two edges in the Newton diagram}

\thanks{ The authors are partially supported by MICIN grants PID2020-113758GB-I00 and PID2022-136613NB-I00 and AGAUR grants 2021SGR 01618 and 2021SGR 01039.}

\author[Isaac A. Garc\'ia, Jaume Gin\'e and V\'{\i}ctor Ma\={n}osa]{Isaac A. Garc\'ia$^{(1)}$, Jaume Gin\'e$^{(1)}$ and V\'{\i}ctor Ma\={n}osa$^{(2)}$}

\address{$^{(1)}$Departament de Matem\`atica, Universitat de Lleida,
Avda. Jaume II, 69, 25001 Lleida, Spain}

\address{$^{(2)}$Departament de Matem\`atiques (MAT), Institut de Matem\`atiques de la UPC (IMTech), Universitat Polit\`ecnica de Catalunya (UPC),
Colom 11, 08222 Terrassa, Spain}

\email{isaac.garcia@udl.cat}
\email{jaume.gine@udl.cat}
\email{victor.manosa@upc.edu}
\subjclass[2000]{34Cxx, 37G15, 37G10}

\keywords{Center, periodic orbits, Poincar\'e map, cyclicity}

\begin{document}

\begin{abstract}
This work focuses on the study of monodromic singularities in planar analytic families of vector fields whose Newton diagram consists of exactly two edges. We begin by analyzing the desingularization scheme of a \emph{minimal model} of polynomial vector fields, denoted by $\mathcal{X}$, which includes only the monomials corresponding to the vertices of the Newton diagram. We then extend this minimal model to the so-called \emph{Brunella-Miari vector fields} $\mathcal{X} \subset \mathcal{X}^{[1]}$, incorporating all monomials associated with points lying on the edges of the Newton diagram. As a second extension, we consider vector fields $\mathcal{X}^{[1]} \subset \mathcal{X}^{[2]}$ that include higher-order terms corresponding to points located above the polygonal line in the Newton diagram. The key point of our approach is to preserve the desingularization geometry at each extension step. We provide explicit desingularization procedures, which enable the computation of the linear part of the return map $\Pi$ in cases where the desingularized singularity is associated with a hyperbolic polycycle. Several nontrivial examples are included to illustrate the method.
\end{abstract}

\maketitle

\section{Introduction}

A singular point of a planar vector field is said to be \emph{monodromic} if it admits a well-defined local return map. Centers and foci are examples of  monodromic points in which nearby trajectories either form closed curves or spiral around the point. The problem of distinguishing between centers and foci, which we shall refer to as \emph{the stability problem}, is a classical issue in the analytic theory of planar differential equations. There exist techniques that, except for computational obstructions, allow this problem to be addressed in some cases.
For instance, when the linear part of the vector field is non-degenerate, nilpotent, or identically zero but without characteristic directions, which are  singular points appearing in the exceptional divisor after the first blow-up, see for example \cite{CZZ,FLLL,GGL,G0,G,G1,GM,RS,TLZ}. However, a significant gap remains in the understanding of the degenerate case, when the differential matrix at the singularity vanishes and \emph{characteristic directions} are present.

The lack of a general methodology to study the degenerate case in the presence of characteristic directions has motivated the development of various geometric and algebraic tools, including desingularization via blow-ups, Newton polygon techniques, asymptotic analysis and the characterization of inverse integrating factors \cite{AGGMed,AGG,Ga-Gi,GaGi2,GaGi3,GG-complex,GGGrau,RS} and \cite{M20}. Unlike in the classical cases, one of the main difficulties in the degenerate setting, when there exist characteristic directions, is that the map is no longer analytic but semi-regular (or Dulac-type) which prevents the application of standard analytic techniques.
In fact, even though it is known that it admits a linear leading term, its global structure remains unknown in general.

In this work, we investigate the stability problem and focus on computing the linear term of the return map for planar analytic vector fields whose Newton diagram has exactly two edges. We first consider the case where the vector field coincides with the Brunella-Miari principal part of the vector field defined by these edges, and then we explore some extensions. Our analysis is restricted to the situation where, after desingularization, the system exhibits a monodromic polycycle composed of hyperbolic saddles.

The structure of the paper is as follows: In Section \ref{s:notation}, we introduce some of the notation that will be used throughout the article. In Section \ref{s:minimalmodel}, we define the notion of minimal models of Brunella-Miari fields and characterize conditions under which such fields have a monodromic point at the origin using the algorithm developed in the works \cite{AGR, AGR2}. These monodromic minimal models have a reversible center at the origin. The main result of this section is Theorem \ref{t:prop-mon-family}. In Section \ref{s:minimalhyp}, we study  three families of minimal models that can be analyzed through specific desingularization schemes leading to hyperbolic polycycles. As before, all these minimal models correspond to reversible centers. The main results are Theorems \ref{t:Teo-main1},   \ref{t:Teo-main2} and \ref{t:Teo-main2-new}. In Section \ref{s:linearterm}, we characterize the linear term of the return map associated with certain monodromic points, including some of the cases studied in the subsequent sections. The main results here are Theorems \ref{t:v1} and \ref{t:v2}. In Section \ref{s:extensions}, we introduce two types of extensions of the minimal models corresponding to Brunella-Miari fields with a monodromic point at the origin. These are desingularized using the blow-up schemes presented in Section \ref{s:minimalhyp}. In, Section \ref{s:examples}, we provide specific examples of such extensions and compute the linear part of their Poincar\'e maps. The main results are Propositions \ref{prop-1221}, \ref{p:novapropTh2} and \ref{p:propoexttipusb}. In Section \ref{s:literatura}, we analyze how certain well-known examples in the literature can be generalized and/or revisited in light of the results presented in this paper. Section \ref{S-ClasesMon} is devoted to show other monodromic classifications not based on the number of edges of the Newton diagram as we do. In Section \ref{s:conjuecture} we discuss how the results of this work relate to a conjecture formulated in \cite{GaGi3}. Finally, all proofs are provided in the concluding Sections \ref{S-Proofs1}, \ref{S-Proofs2} and \ref{S-Proofs3}.

\section{Background and some notations}\label{s:notation}

Through this work we will use the following notation. Let \begin{equation}\label{e:campZ}
\mathcal{Z}= \sum_{(i,j)\in\mathbb{N}^2} a_{ij} x^i y^{j-1}\,{\partial x} \, +
\sum_{(i,j)\in\mathbb{N}^2} b_{ij} x^{i-1} y^j\,{\partial y}
\end{equation}
be an analytic vector field in a neighborhood of the origin of $\mathbb{R}^2$.  The \emph{Newton diagram} $\mathbf{N}(\mathcal{Z})$ of $\mathcal{Z}$ is the polygonal line that appears when considering the boundary (modulus the two open rays) of the convex hull of the set $\bigcup_{(i,j)\in\text{supp}(\mathcal{Z})} \{(i, j) + \mathbb{R}^2_+\}$, where $\text{supp}(\mathcal{Z}) = \{(i, j) \in \mathbb{N}^2 : (a_{ij}, b_{ij}) \neq (0,0)\}$.
The endpoints of the edges of $\mathbf{N}(\mathcal{Z})$, which lie in  $\mathbb{N}^2$, are its \emph{vertices}.

To each edge of $\mathbf{N}(\mathcal{Z})$, we associate the \emph{weights} $(p,q) \in \mathbb{N}^2$, with $p$ and $q$ coprimes, given by the tangent $q/p$ of the angle between that edge and the ordinate axis. In what follows, we denote by $W(\mathbf{N}(\mathcal{Z})) $ the set of all weights associated with the edges of the Newton diagram $\mathbf{N}(\mathcal{Z})$ determined by:
\begin{equation}\label{e:WNZ}
W(\mathbf{N}(\mathcal{Z})) = \{(p_1, q_1), (p_2, q_2), \ldots, (p_\ell, q_\ell)\} \text{ ordered by } \frac{q_1}{p_1} < \frac{q_2}{p_2} < \cdots < \frac{q_\ell}{p_\ell}.
\end{equation}
In this setting, the edge labeled $i$ is assigned the weights $(p_i, q_i)$, and its adjacent upper edge corresponds to edge $i - 1$. For a vertex with coordinates $(i, j) \in \text{supp}(\mathcal{Z})$, we define its \emph{vector coefficient} as $(\mathfrak{a}, \mathfrak{b}) = (a_{ij}, b_{ij})$.

The \emph{Brunella-Miari vector field},  $\mathcal{Z}_{\Delta}$ associated with of $\mathcal{Z}$, also called the \emph{principal part}, is defined  as follows. For each $(p_i, q_i) \in W(\mathbf{N}(\mathcal{Z}))$ with $i = 1, \ldots, k$, we have the $(p_i, q_i)$-quasihomogeneous expansion $\mathcal{Z}= \mathcal{Z}_{r_i} + \cdots$, where $\mathcal{Z}_{r_i}$ is a quasi-homogeneous vector field with type $(p_i, q_i)$ and degree $r_i$, \cite{AGGMed}. We define
$$
\mathcal{Z}_{\Delta} = \bigoplus_{i=1}^k \mathcal{Z}_i,
$$
where the sum $\oplus$ of two vector fields acts as the ordinary sum but the common monomial terms, corresponding to vertices of $\mathbf{N}(\mathcal{Z})$, only appear once instead of twice.

As a consequence of the results in \cite{BM}, see also \cite[Theorem 6]{GaGi3}, if $\mathcal{Z}$ is an analytic planar vector field having an isolated singularity at the origin, then it is monodromic if and only if the origin is monodromic for $\mathcal{Z}_{\Delta}$, provided that this last field is not degenerated, that is, no quasi-homogeneous component
$\mathcal{Z}_{r_i}$ of $\mathcal{Z}_{\Delta}$ have singularities outside the coordinate axes.

Suppose that there exists $(m,n)\in\mathbb{N}^2$ such that the weighed polar coordinates change $x=\rho^m\cos\theta$ and $y=\rho^n\sin\theta$ defines an analytic system
\begin{equation}\label{e:polarspolars}
\dot{\rho} = R(\rho,\theta),\,
\dot{\theta}  = \Theta(\rho,\theta),
\end{equation}
for $\rho\geq 0$ and $\theta\in[0,2\pi)$. Then, any $\theta_*\in[0,2\pi)$ such that $\Theta(0,\theta_*)=0$ is called a \emph{characteristic direction}. Of course, if $\theta_*$ is a characteristic direction, then $\theta_*+\pi$ is as well. Notice also that if, in the above change, $(m,n)=(1,1)$, which corresponds to the classical polar coordinates, or if $(m,n)\in W(\mathbf{N}(\mathcal{Z}))$, then the system \eqref{e:polarspolars} is analytic. In this situation we will see that the forthcoming minimal model \eqref{e:minimal} only has, at most, two characteristic directions in $[0,\pi)$, see Propositions \ref{p:prop-dir.car-family} and ~\ref{p:prop-dir.car-family-2}.

\medskip

 The following definitions establish the types of characteristic directions involved in the main results of the paper.

\begin{definition}\label{d:typeA}
Let $\theta_* = 0$ be characteristic direction associated to the monodromic singularity at the origin of an analytic vector field $\mathcal{Z} = P(x,y) \partial_x + Q(x,y) \partial_y$. We say that $\theta_*=0$ is of type $\mathcal{A}$  with  \emph{weights} $(p,q)\in W(\mathbf{N}(\mathcal{Z})) =\{ (p_1, q_1), (p_2, q_2)\}$, if it can can be desingularized using the blow-ups: $(x, y) \mapsto (z,w)$ with $z=x^{q}/y^{p}$ and $w = y/x$ which gives a hyperbolic saddle at the origin of the system  \eqref{e:cordA} below; and $(x,y) \mapsto (x, v)$ with $v = y^{p}/x^{q}$ which gives a regular flow in a neighborhood of the regular solution $x=0$ of the system  \eqref{e:cordB}    below.
\end{definition}

One of the main ideas of this work is to desingularize the origin of certain systems using algebraic blow ups whose structure is encoded in $W(\mathbf{N}(\mathcal{Z}))$, and to compute the linear coefficient of the associated Poincar\'{e} map.  This requires studying the flow transitions both inside sectors containing characteristic directions and outside them.  The latter are obtained from first-order variational equations in weighted polar coordinates, with weights determined by $W(\mathbf{N}(\mathcal{Z}))$, that must be compatible with the chosen algebraic blow ups. Type $\mathcal{A}$ singularities are compatible with classical polar coordinates. The following definition deals with certain pairs of characteristic directions that are desingularized through blow ups compatible with polar coordinates with weights $(p_2,q_2-p_2)$.

\begin{definition}\label{d:typeB}

Let the origin be a monodromic singularity  of an analytic vector field $\mathcal{Z} = P(x,y) \partial_x + Q(x,y) \partial_y$, and
let $\{0,\pi/2\}$ be a $2$-tuple of characteristic directions associated appearing when using weighed polar coordinates $x=\rho^{p_2}\cos(\theta)$, $y=\rho^{q_2-p_2}\sin(\theta)$ where $(p_2,q_2)\in W(\mathbf{N}(\mathcal{Z}))$. We say that the 2-tuple is of type $\mathcal{B}$   if it can can be desingularized using the blow-ups.

\begin{enumerate}
\item[(a)] Direction $\theta=0$:  $(x, y) \mapsto (z,w)$ with $z=x^{q_2}/y^{p_2}$, $w = y^{p_2}/x^{q_2-p_2}$  which gives a hyperbolic saddle at the origin of the system  \eqref{e:cordA} below; and $(x,y) \mapsto (x, v)$ with $v = y^{p_2}/x^{q_2}$ which gives a regular flow in a neighborhood of the regular solution $x=0$ of the system  \eqref{e:cordB}  below.
\item[(b)] Direction $\theta=\pi/2$: $(x, y) \mapsto (z,w)$ with $z=y/x$, $w = x^{q_2-p_2}/y^{p_2}$  which gives a hyperbolic saddle at the origin of \eqref{e:cordA}; and $(x,y) \mapsto (u, y)$ with $u = x/y$ which gives a regular flow in a neighborhood of the regular solution $y=0$ of the system  \eqref{e:cordC} below.
\end{enumerate}
\end{definition}

See the Figures \ref{f:fig1}--\ref{f:fig5} for an illustration of the different local charts. Here, the systems \eqref{e:cordA}, \eqref{e:cordB} and \eqref{e:cordC} are given by:
\begin{equation}\label{e:cordA}
\dot{z}=Z(z,w)=-az+o(1), \ \ \dot{w} = W(z,w)=bw+o(1);\tag{A}
\end{equation}
with $a,b \neq 0$ and $\lambda = a/b > 0$ being the so-called \emph{hyperbolicity ratio} of the saddle at the origin of \eqref{e:cordA}; and
\begin{equation}\label{e:cordB}
\dot{x}=X(x,v), \ \ \dot{v} = V(x,v),\tag{B}
\end{equation} with $X(0,v) = 0$, $V(0,v) \neq 0$, and
\begin{equation}\label{e:cordC}
\dot{u} = U(u,y), \ \  \dot{y}=Y(u,y) \tag{C}
\end{equation} with $Y(u,0) = 0$, $U(u,0) \neq 0$.

Of course, if $\theta_*$ is an $\mathcal{A}$-type characteristic direction or it belongs to a $\mathcal{B}$-type $2$-tuple then so is its complementary characteristic directions $\theta_*+\pi$. This is because the use of algebraic blow-ups covers both directions.

\begin{remark} \label{remark-rotation-A}
{\rm The  $\mathcal{A}$-type definition can be straightforward generalized to the case $\# W(\mathbf{N}(\mathcal{X})) \geq 2$ and the set of characteristic directions $\Omega = \{\theta_1,\ldots,\theta_n\}$ with $n \geq 2$. Let $\theta_i$ be an $\mathcal{A}$-type characteristic direction with $i =1, \ldots, n$, after a rotation of angle $-\theta_i$, that may change $\mathbf{N}(\mathcal{X})$, we obtain a new vector field having $0$ as $\mathcal{A}$-type characteristic direction which, under the blow-ups described in Definition \ref{d:typeA}, transforms into the following systems:
\begin{equation}\label{e:sistemesi}
\dot{z}=Z_i(z,w), \, \dot{w} = W_i(z,w) \mbox{ and }
\dot{x}=X_i(x, v), \, \dot{v} = V_i(x,v),
\end{equation}
that corresponds with the fields \eqref{e:cordA} and \eqref{e:cordB}, respectively. }
\end{remark}

We stress that all the characteristic directions that appear in the forthcoming Theorems \ref{t:Teo-main1} and \ref{t:Teo-main2}, and in Propositions \ref{prop-1221} and \ref{p:novapropTh2}, are of type $\mathcal{A}$, while the characteristic directions that appear in Theorems \ref{t:Teo-main2-new} and Proposition \ref{p:propoexttipusb} form $2$-tuples of type $\mathcal{B}$.

\section{The minimal model}\label{s:minimalmodel}

We consider the 4-parameter family of polynomial planar vector fields
\begin{equation}\label{e:minimal}
\dot{x} = A y^{2s-1} + B x^{2 \alpha} y^{2 \beta-1}, \ \ \dot{y} = C x^{2r-1} + D x^{2 \alpha-1} y^{2 \beta},
\end{equation}
with arbitrary odd degree $\max\{ 2s-1, 2 r-1, 2 (\alpha+ \beta) -1\}$. Let $\mathcal{X}$ be its associated vector field. Clearly, when the origin of \eqref{e:minimal} is \emph{monodromic} it becomes a \emph{time-reversible center}. We call \eqref{e:minimal} the \emph{minimal model} since it is the simplest arbitrary degree family satisfying both $\# W(\mathbf{N}(\mathcal{X})) = 2$ and $\mathcal{X} = \mathcal{X}_\Delta$. The simplicity comes because \eqref{e:minimal} has the minimum number of monomials preserving the former conditions or, in other words, because $\mathbf{N}(\mathcal{X})$ has only points associated to the mononials of \eqref{e:minimal} at its vertices.

In this section, we characterize the monodromy of the origin of the vector field (see the Theorem  \ref{t:prop-mon-family} below), as well as its possible characteristic directions (see Proposition \ref{p:prop-dir.car-family} and \ref{p:prop-dir.car-family-2}). All the proof are written in Section \ref{S-Proofs1}.

\begin{theorem}\label{t:prop-mon-family}
The origin is a monodromic singular point of the polynomial vector field \eqref{e:minimal}, and therefore is a center, if and only if its exponents belong to the set
\begin{eqnarray*}\label{E-1}
\mathcal{E} &=& \{ r, s, \alpha, \beta \in \mathbb{N} \backslash \{0\} : s - \beta \geq 1, r-\alpha \geq 1, (r-\alpha)(s-\beta) > \alpha \beta \},
\end{eqnarray*}
and its coefficients lie in the monodromic parameter space
\begin{eqnarray*}\label{L-1}
\Lambda &=& \{ (A, B,C,D) \in \mathbb{R}^4 : A C < 0, A (D (s-\beta) - B \alpha) < 0, \\
 & & C (B (\alpha-r) + D \beta) > 0  \}. \nonumber
\end{eqnarray*}
Moreover, restricting to $\mathcal{E} \cap \Lambda$, the conditions $\# W(\mathbf{N}(\mathcal{X})) = 2$ and $\mathcal{X} = \mathcal{X}_\Delta$  hold.
\end{theorem}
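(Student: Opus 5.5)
The proof splits into a geometric part (the Newton diagram) and a monodromy part, which I would use in that order.

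\emph{Newton diagram.} Write \eqref{e:minimal} in the form \eqref{e:campZ}. The monomial $Ay^{2s-1}\partial_x$ corresponds to the point $(0,2s)$. The monomial $Cx^{2r-1}\partial_y$ corresponds to $(2r,0)$. The two monomials $Bx^{2\alpha}y^{2\beta-1}\partial_x$ and $Dx^{2\alpha-1}y^{2\beta}\partial_y$ share the point $(2\alpha,2\beta)$, with vector coefficient $(B,D)$.

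The support is therefore three points. $\mathbf N(\mathcal X)$ has exactly two edges precisely when $(2\alpha,2\beta)$ lies strictly below the segment joining $(0,2s)$ and $(2r,0)$, with $\alpha<r$ and $\beta<s$. That is, $\alpha/r+\beta/s<1$, which is equivalent to $(r-\alpha)(s-\beta)>\alpha\beta$ once $r-\alpha\ge1$ and $s-\beta\ge1$.

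If $(2\alpha,2\beta)$ lies on or above the segment, or outside the strip, then $\mathbf N$ has a single edge, and $\mathcal X_\Delta$ is the quasihomogeneous field $Ay^{2s-1}\partial_x+Cx^{2r-1}\partial_y$ (plus the middle monomial if it lies on the segment). I would dispose of this case separately, either by excluding it through the theorem's hypotheses or by checking that monodromy then forces a different Newton diagram. Since all support points are vertices, $\mathcal X=\mathcal X_\Delta$ holds automatically, which gives the final claim.

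\emph{Monodromy.} Since $\mathcal X=\mathcal X_\Delta$, I would apply the Brunella–Miari criterion (\cite{BM}, \cite[Theorem 6]{GaGi3}), or the algorithm of \cite{AGR,AGR2}. Monodromy requires that the origin be isolated, that there be no characteristic orbits along the axes, and that the edge quasihomogeneous components have no invariant curves off the axes. The two edges have weights $(p_1,q_1)\propto(s-\beta,\alpha)$ and $(p_2,q_2)\propto(\beta,r-\alpha)$, up to the ordering of slopes. For each edge, the quasihomogeneous component $\mathcal X_i$ contains two monomials. Its associated polynomial $F_i=\mathrm{wedge}(\mathcal X_i,E_i)$, with $E_i=p_ix\partial_x+q_iy\partial_y$ the Euler field, factors as a monomial times a binomial.

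\emph{Upper edge.} On the edge joining $(0,2s)$ and $(2\alpha,2\beta)$,
\[
F_1 = p_1x(Cx^{2r-1}+\cdots)-q_1y(\cdots) \sim y^{2\beta}x^{\cdot}\big[(\ldots)x^{2\alpha}+(\ldots)y^{2(s-\beta)}\big].
\]
The requirement that the binomial factor $x^{2\alpha}+\kappa y^{2(s-\beta)}$ have no real zeros off the axes amounts to $\kappa>0$. Computing $\kappa$ gives the condition $A(D(s-\beta)-B\alpha)<0$.

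\emph{Lower edge and axes.} The lower edge similarly gives $C(B(\alpha-r)+D\beta)>0$. The absence of real characteristic directions along the axes, i.e.\ the vertex conditions from the extremal monomials, gives $AC<0$. The parity of the exponents (even powers $2s$, $2\alpha$, $2r$, $2\beta$) is what makes these sign conditions sufficient, since no odd factor can change sign.

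\emph{Center.} Finally, $(x,y,t)\mapsto(-x,y,-t)$ leaves \eqref{e:minimal} invariant, because $\dot x$ is even in $x$ and $\dot y$ is odd in $x$. Hence a monodromic origin is a reversible center.

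\emph{Main obstacle.} The delicate step is the ``only if'' direction in degenerate configurations. This covers the cases where $\alpha\beta\ge(r-\alpha)(s-\beta)$, or $s=\beta$, or $r=\alpha$, where the diagram changes, as well as the cases where the sign conditions hold with equality. For these I would show directly that a characteristic orbit exists, using the invariant curve $x^{2\alpha}+\kappa y^{2(s-\beta)}=0$ of the principal part, which persists as a real characteristic orbit by Brunella–Miari, or an axis that becomes invariant. Correctly tracking the signs of $\kappa$ from the wedge computations is where most care is needed.
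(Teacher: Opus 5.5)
Your ``if'' direction follows the paper's argument. The ingredients are the three even vertices $(0,2s)$, $(2\alpha,2\beta)$, $(2r,0)$, the exterior-vertex condition $AC<0$, and the two edge sign conditions, closed off by Theorem~3 of \cite{AGR}. Your $p_ix\dot y-q_iy\dot x$ on the edge component is a nonzero multiple of the Hamiltonian $H_i$ of the conservative--dissipative splitting the paper uses, so the factorizations and inequalities coincide. Two minor slips: the upper-edge component contains no $Cx^{2r-1}$ term, and $F_1=y^{2\beta}[(p_1D-q_1B)x^{2\alpha}-q_1Ay^{2(s-\beta)}]$ has no monomial factor in $x$.

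The gap is your plan for the converse. You propose to exhibit a characteristic orbit whenever the exponents leave $\mathcal{E}$ or an inequality of $\Lambda$ becomes an equality. In those configurations the origin is often monodromic, so there is nothing to exhibit. For $r=s=\alpha=\beta=1$ and $AC<0$ the system has a linear center. For exponents in $\mathcal{E}$ with $B=D=0$ and $AC<0$, it is the Hamiltonian field of the definite function $Ay^{2s}/(2s)-Cx^{2r}/(2r)$. So these cases can only be removed by hypothesis, not by producing characteristic orbits.

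More seriously, the strict inequalities are not necessary even in the genuine two-edge setting. Take $(\alpha,\beta,r,s)=(1,1,3,3)$ and $(A,B,C,D)=(-1,-2,1,-1)$. Then $(B,D)\neq(0,0)$, $AC<0$ and $C(B(\alpha-r)+D\beta)=3>0$, but $A(D(s-\beta)-B\alpha)=0$. One computes
\[
2x\dot y-y\dot x=2x^6+y^6,\qquad x\dot y-2y\dot x=x^6+3x^2y^2+2y^6 .
\]
Now use the charts from the proof of Theorem~\ref{t:Teo-main1}. At direction $0$ you still get a hyperbolic saddle and $\dot w_2|_{x=0}=1+3w_2^2$. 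At $\pi/2$ you get $\dot z=z(1+w_1^2+z^6w_1^2)$, $\dot w_1=-w_1^3(1+2z^6)$, a semi-hyperbolic topological saddle, together with $\dot z_1|_{y=0}=-1$. The resulting polycycle is monodromic, so the origin is a (reversible) center with parameters outside $\Lambda$. At such an equality your binomial collapses to a monomial, leaving no off-axis invariant curve, and no axis is invariant since $A,C\neq0$.

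Hence the converse, read literally, cannot be proved. It has to be placed inside the two-edge setting that the paper tacitly assumes; the paper treats the $\mathcal{E}$ inequalities as the convex-hull condition on $\mathbf{N}(\mathcal{X})$ and only verifies sufficiency. Within that setting, what you can actually prove is that $AC<0$ and the non-strict versions of the last two inequalities are necessary. This is because a strict reversal makes the bracket in the corresponding $G_{r_i}$ change sign.
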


\begin{remark}
{\rm Under the restrictions in  $\mathcal{E} \cap \Lambda$, the vector field \eqref{e:minimal} is the simplest vector field with a monodromic singular point at the origin such that $\mathcal{X} = \mathcal{X}_\Delta$ and $\# W(\mathbf{N}(\mathcal{X}) = 2$ in the sense that no monomial in $\mathcal{X}$ corresponds to a point in $\mathbf{N}(\mathcal{X})$ that is not a vertex of it. }
\end{remark}

The following proposition computes the characteristic directions $\theta_* \in[0, \pi)$ of the origin of \eqref{e:minimal} in polar coordinates that we will need to perform blow-ups in the proof of Theorem \ref{t:Teo-main1}. Recall that each characteristic direction $\theta_*$ has associated a symmetric one $\theta_*+\pi \in[\pi, 2\pi)$.

\begin{proposition}\label{p:prop-dir.car-family}
Let the origin be a monodromic singular point of the polynomial vector field \eqref{e:minimal}. Then, the characteristic directions $\theta_* \in[0, \pi)$ of the origin in polar coordinates are classified as follows: (i) if $r < \min\{s, \alpha + \beta\}$ then $\theta_*= \pi/2$; (ii) if $s < \min\{r, \alpha + \beta\}$ then $\theta_*= 0$; (iii) if $\alpha + \beta < \min\{r, s\}$ and $D \neq B$ then $\theta_* \in \{ 0, \pi/2\}$.
\end{proposition}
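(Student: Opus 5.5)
In classical polar coordinates $x=\rho\cos\theta$, $y=\rho\sin\theta$ we have $\rho^2\dot\theta = xQ-yP$. For \eqref{e:minimal} this gives
$$
xQ - yP = C x^{2r} + D x^{2\alpha}y^{2\beta} - A y^{2s} - B x^{2\alpha}y^{2\beta} = C x^{2r} - A y^{2s} + (D-B)x^{2\alpha}y^{2\beta},
$$
a sum of three homogeneous terms of degrees $2r$, $2s$ and $2(\alpha+\beta)$. Substituting the polar expressions, $\dot\theta = \rho^{m-1}\,\Theta_0(\theta) + O(\rho^{m})$. Here $m$ is the lowest degree among the monomials actually present, and $\Theta_0$ is the sum of the corresponding angular factors $C\cos^{2r}\theta$, $-A\sin^{2s}\theta$ and $(D-B)\cos^{2\alpha}\theta\sin^{2\beta}\theta$, each taken only if its degree equals $m$. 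The characteristic directions are then the zeros of $\Theta_0$ in $[0,\pi)$. If the degree-$m$ part vanished identically we would have to go further, but the hypotheses of the three cases prevent this.

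\emph{Case (i).} When $r<\min\{s,\alpha+\beta\}$ the leading term is $C\cos^{2r}\theta$ alone. By Theorem~\ref{t:prop-mon-family}, $AC<0$, so $C\neq0$, and the only zero in $[0,\pi)$ is $\theta_*=\pi/2$.

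\emph{Case (ii).} Symmetrically, when $s<\min\{r,\alpha+\beta\}$ the leading term is $-A\sin^{2s}\theta$ with $A\neq 0$, and its only zero is $\theta_*=0$.

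\emph{Case (iii).} When $\alpha+\beta<\min\{r,s\}$ the leading term is $(D-B)\cos^{2\alpha}\theta\sin^{2\beta}\theta$. Since $D\ne B$ it is nonzero, and because $\alpha,\beta\ge1$ it vanishes exactly at $\theta\in\{0,\pi/2\}$.

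The only point needing care is that the lowest-degree term is a single monomial, so no cancellation can occur. In case (i) $r$ is strictly less than $s$ and $\alpha+\beta$, and similarly in (ii) and (iii), so this is immediate.

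It is also worth noting that the set $\mathcal{E}$ is consistent with each case. For instance, in case (iii) the conditions $r>\alpha+\beta$ and $s>\alpha+\beta$ do not contradict $(r-\alpha)(s-\beta)>\alpha\beta$; indeed $r-\alpha>\beta$ and $s-\beta>\alpha$ imply it. This is not needed for the proof, since the hypothesis already assumes monodromy.

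Finally, the proposition lists only these three strict cases. Equalities among $r$, $s$, $\alpha+\beta$ are not claimed, and there a combined trigonometric polynomial would have to be analysed. So the proof consists exactly of the three computations above, with the monodromy conditions of Theorem~\ref{t:prop-mon-family} used only to guarantee $A\neq0$ and $C\neq0$.
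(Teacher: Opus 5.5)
Your proof is correct and follows essentially the same route as the paper: write the angular component in polar coordinates as $C\cos^{2r}\theta\,\rho^{2r-2}-A\sin^{2s}\theta\,\rho^{2s-2}+(D-B)\cos^{2\alpha}\theta\sin^{2\beta}\theta\,\rho^{2(\alpha+\beta)-2}$, and divide by the lowest power of $\rho$. Then read off the zeros of the single surviving angular factor, using $A,C\neq 0$ from $\Lambda$ and $D\neq B$ in case (iii). One small slip: if $m$ is the lowest degree in $xQ-yP$, the leading power is $\rho^{m-2}$ rather than $\rho^{m-1}$, which does not affect the conclusion.
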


The next proposition computes the characteristic directions $\theta_* \in[0, \pi)$ of the origin of \eqref{e:minimal} in $(p,q)$-weighted polar coordinates with $(p,q) \in W(\mathbf{N}(\mathcal{X})$.

\begin{proposition}\label{p:prop-dir.car-family-2}
Let the origin be a monodromic singular point of the polynomial vector field \eqref{e:minimal}. Then, the characteristic directions $\theta_* \in[0, \pi)$ of the origin in $(p,q)$-weighted polar coordinates with $(p,q) \in W(\mathbf{N}(\mathcal{X}))$ are classified as follows: If $(p,q)=(p_1, q_1) = (s-\beta, \alpha)$ then $\theta_* =0$ while if $(p,q)=(p_2, q_2) = (\beta, r-\alpha)$ then $\theta_* = \pi/2$.
\end{proposition}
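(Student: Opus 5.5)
The plan is a direct computation of the angular equation in $(p,q)$-weighted polar coordinates. I will show that for each weight in $W(\mathbf{N}(\mathcal{X}))$, the leading part of $\dot\theta$ at $\rho=0$ factors as a power of $\sin\theta$ (resp. $\cos\theta$) times a factor whose sign is fixed by $\Lambda$.

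\textbf{Identifying the weights.} The monomials of \eqref{e:minimal} sit at three points of the Newton diagram:
\begin{itemize}
\item $A y^{2s-1}\partial_x$ at $(0,2s)$;
\item $C x^{2r-1}\partial_y$ at $(2r,0)$;
\item $B x^{2\alpha}y^{2\beta-1}\partial_x$ and $D x^{2\alpha-1}y^{2\beta}\partial_y$, both at $(2\alpha,2\beta)$.
\end{itemize}
By Theorem \ref{t:prop-mon-family}, on $\mathcal{E}\cap\Lambda$ these are the three vertices of a diagram with two edges. The upper edge joins $(0,2s)$ and $(2\alpha,2\beta)$, and has tangent $2\alpha/(2s-2\beta)$ with the ordinate axis. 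The lower edge joins $(2\alpha,2\beta)$ and $(2r,0)$, with tangent $(r-\alpha)/\beta$. After reducing to coprime entries, this gives $(p_1,q_1)=(s-\beta,\alpha)$ and $(p_2,q_2)=(\beta,r-\alpha)$. The inequality $(r-\alpha)(s-\beta)>\alpha\beta$ is exactly $q_1/p_1<q_2/p_2$, so the ordering \eqref{e:WNZ} is respected.

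\textbf{The angular equation.} Substituting $x=\rho^p\cos\theta$, $y=\rho^q\sin\theta$ gives
$$\rho^{p+q}\,(p\cos^2\theta+q\sin^2\theta)\,\dot\theta = p\,x\dot y-q\,y\dot x .$$
Hence $\Theta(0,\theta)$ is, up to the positive factor $p\cos^2\theta+q\sin^2\theta$ and a power of $\rho$ (removable by a time rescaling), the lowest $(p,q)$-quasihomogeneous part of
$$p\,x\dot y-q\,y\dot x = pC x^{2r}+(pD-qB)\,x^{2\alpha}y^{2\beta}-qA\,y^{2s}.$$

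\textbf{First weight $(p_1,q_1)=(s-\beta,\alpha)$.} The weighted degrees of $y^{2s}$ and $x^{2\alpha}y^{2\beta}$ both equal $2\alpha s$. The degree of $x^{2r}$ is $2r(s-\beta)$, and $2r(s-\beta)>2\alpha s$ is equivalent to $(r-\alpha)(s-\beta)>\alpha\beta$, which holds in $\mathcal{E}$. So the leading part is
$$\sin^{2\beta}\theta\,\bigl[-\alpha A\sin^{2(s-\beta)}\theta+((s-\beta)D-\alpha B)\cos^{2\alpha}\theta\bigr].$$
In $\Lambda$ we have $A((s-\beta)D-\alpha B)<0$. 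Thus $-\alpha A$ and $(s-\beta)D-\alpha B$ are nonzero with the same sign, and since both trigonometric factors are even powers that never vanish simultaneously, the bracket never vanishes. The only zeros in $[0,\pi)$ therefore come from $\sin\theta=0$, giving $\theta_*=0$.

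\textbf{Second weight $(p_2,q_2)=(\beta,r-\alpha)$.} Symmetrically, $x^{2r}$ and $x^{2\alpha}y^{2\beta}$ both have degree $2r\beta$. The term $y^{2s}$ has degree $2s(r-\alpha)$, which is larger by the same inequality. The leading part is
$$\cos^{2\alpha}\theta\,\bigl[\beta C\cos^{2(r-\alpha)}\theta+(\beta D-(r-\alpha)B)\sin^{2\beta}\theta\bigr].$$
The condition $C(B(\alpha-r)+D\beta)>0$ from $\Lambda$ makes the two coefficients nonzero with the same sign. The bracket is again sign-definite, so the only characteristic direction in $[0,\pi)$ is $\theta_*=\pi/2$.

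\textbf{Main obstacle.} The delicate point is the bookkeeping in the degree comparisons: confirming that the inequality in $\mathcal{E}$ is exactly what eliminates the third monomial from the leading part for each weight. One also has to check that the nonvanishing factor in $\dot\theta$ and the time rescaling do not alter the zero set of $\Theta(0,\cdot)$.
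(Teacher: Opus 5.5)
Your proposal is correct and follows the paper's proof. The paper also writes $\Theta(0,\theta)$ in the $(p_1,q_1)$- and $(p_2,q_2)$-weighted polar charts as $\sin^{2\beta}\theta\,P_1(\theta)$ and $\cos^{2\alpha}\theta\,P_2(\theta)$, respectively, divided by the positive factor $p\cos^2\theta+q\sin^2\theta$, and uses the sign conditions in $\Lambda$ to show that $P_1,P_2$ have no real zeros. Your derivation via $p\,x\dot y-q\,y\dot x$, and your explicit check that $(r-\alpha)(s-\beta)>\alpha\beta$ removes the third monomial from each leading part, fill in details the paper leaves implicit; your coefficient $(s-\beta)D-\alpha B$ is also the correct form of the garbled one printed in the paper's $P_1$.
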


\section{minimal models with hyperbolic monodromic polycycle}\label{s:minimalhyp}

The main results of this section are Theorems \ref{t:Teo-main1}, \ref{t:Teo-main2}, and \ref{t:Teo-main2-new}, and are proved in Section \ref{S-Proofs1}. They characterize the monodromic singular points of the minimal models with the characteristic directions described in each case of Proposition \ref{p:prop-dir.car-family} or Proposition \ref{p:prop-dir.car-family-2} that are desingularized via a specific sequence of blow-ups that transforms the origin of the vector field into a monodromic polycycle composed of hyperbolic saddles.

We would like to point out that, the origin of any monodromic minimal model always desingularizes but different desingularization schemes appear in each particular case. We have not been able to explicitly determine the desingularization scheme of every minimal model.
Indeed, in this work, the desingularization schemes of minimal models lead to hyperbolic polycycles; however, we stress that there are examples of minimal models with non-hyperbolic monodromic polycycles, see Section \ref{ss:nonhyp}.

In all the cases described in the above mentioned results, and since they deal with particular cases of the minimal model \eqref{e:minimal}, the characterized singularities are reversible centers.

\subsection{The minimal model *}

In this section, we consider a general family of minimal models with a monodromic singularity at the origin, characterized by a specific geometry of its associated monodromic polycycle.

We define the subset of exponents $\mathcal{E}^* \subset \mathcal{E}$ as
\begin{equation}\label{exp-enteros+}
\mathcal{E}^* = \{ (\alpha, \beta, r, s) \in \mathcal{E} : e_i(\alpha, \beta, r, s) \in \mathbb{N} \cup \{0\}, i=1,\ldots, 8\},
\end{equation}
where
\begin{eqnarray} \label{exponents-ei}
e_1 &=& \frac{2 (s-\alpha-\beta)}{r - \alpha - \beta}, \ \ e_2 = \frac{2(-r s + s \alpha + r \beta)}{-r + \alpha + \beta}, \ \ e_3 = \frac{2 (r - \alpha-\beta)}{s - \alpha - \beta}, \nonumber \\
e_4 &=& \frac{2(r s - s \alpha - r \beta)}{s - \alpha - \beta}, \ \ e_5 = \frac{2 (r s - s \alpha - r \beta)}{\alpha}, \ \ e_6 = \frac{2r-\alpha}{\alpha}, \\
e_7 &=& \frac{2 (r s - s \alpha - r \beta)}{\beta}, \ \ e_8 = \frac{2 s-\beta}{\beta}. \nonumber
\end{eqnarray}
The conditions in $\mathcal{E}^*$ ensure that the characteristic directions are  of $\mathcal{A}$ type, and that after a specific sequence of blow-ups given below, the associated vector fields are still polynomial ones. Naturally, this set is non-empty: here we show a short list of some cases of $(\alpha, \beta, r, s) \in \mathcal{E}^*$: If $\alpha = \beta = 1$ then the smallest $(r, s)$ are $(r, s) \in \{ (3,3), (3,4) \}$. If $\alpha = 1$, $\beta = 2$ then the smallest $(r, s)$ are $(r, s) \in \{ (4,4), (4,5) \}$.

We also define the monodromic parameter subset $\Lambda^* \subset \Lambda$ by
\begin{equation}\label{mon-sillas-hyper}
\Lambda^* = \Lambda \cap \{ (B \alpha - D (s - \beta)) (B-D) > 0, (B (r -\alpha) - D \beta) (B-D) > 0 \}.
\end{equation}

\begin{theorem}\label{t:Teo-main1}
Let the origin be a monodromic singular point of the polynomial vector field \eqref{e:minimal} restricted to $\mathcal{E}^* \cap \Lambda^*$ with $\mathcal{E}^* \subset \mathcal{E}$ and $\Lambda^* \subset \Lambda$ defined in \eqref{exp-enteros+} and \eqref{mon-sillas-hyper}. Then, in polar coordinates there are two characteristic directions $\{0, \pi/2 \}$ and $(1,1) \not\in W(\mathbf{N}(\mathcal{X})) = \{ (p_1, q_1), (p_2, q_2)\}$. Moreover, both characteristic directions are of type $\mathcal{A}$  with weights $(p_2, q_2)$
 and $(p_1, q_1)$, respectively. Furthermore, the associated  blow-up polycycle is hyperbolic.
\end{theorem}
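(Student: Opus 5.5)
The plan is to work directly with the explicit field \eqref{e:minimal} and the Newton diagram data from Theorem \ref{t:prop-mon-family}. On $\mathcal{E}$ the three support points are $(0,2s)$ from $Ay^{2s-1}\partial_x$, $(2r,0)$ from $Cx^{2r-1}\partial_y$, and $(2\alpha,2\beta)$ from the mixed monomials. The two edges therefore have weights
\[
(p_1,q_1)=(s-\beta,\alpha), \qquad (p_2,q_2)=(\beta,r-\alpha),
\]
after dividing by the gcd, as in Proposition \ref{p:prop-dir.car-family-2}.

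\textbf{Step 1: characteristic directions in polar coordinates.} First I show that integrality of the $e_i$ forces $\alpha+\beta<\min\{r,s\}$. Indeed $e_1,e_3\ge 0$ with $e_1e_3=4$ rules out $r-\alpha-\beta$ and $s-\alpha-\beta$ having opposite signs. Together with $(r-\alpha)(s-\beta)>\alpha\beta$, this leaves only case (iii) of Proposition \ref{p:prop-dir.car-family}. The inequality $(B-D)(\cdots)>0$ in $\Lambda^*$ gives $B\neq D$, so the directions are exactly $\{0,\pi/2\}$. Next, $(1,1)\notin W$ because the edge slopes $\alpha/(s-\beta)$ and $(r-\alpha)/\beta$ cannot equal $1$ once $\alpha+\beta<\min\{r,s\}$.

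\textbf{Step 2: the direction $\theta=0$.} I perform the blow-ups of Definition \ref{d:typeA} with weights $(p,q)=(p_2,q_2)$, that is $z=x^{q}/y^{p}$ and $w=y/x$, plus the chart $v=y^{p}/x^{q}$. I compute the transformed fields explicitly and divide by the appropriate power of the exceptional divisors. The conditions $e_1,e_2,e_5,e_6\in\mathbb{N}\cup\{0\}$, or whichever subset appears in the exponents, should be exactly what makes the resulting vector fields polynomial.

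At the origin of the $(z,w)$ chart, the linear part should be diagonal. Its entries are linear combinations of $B,D$ with exponent weights, giving eigenvalues proportional to $B\alpha-D(s-\beta)$ and to $B-D$, or similar. The sign condition in $\Lambda^*$ then gives opposite signs, hence a hyperbolic saddle. In the $(x,v)$ chart, the line $x=0$ is invariant, and $V(0,v)$ is a combination of $A$, $C$ and $v$-powers whose nonvanishing follows from $AC<0$ together with the $\Lambda$ inequalities.

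\textbf{Step 3: the direction $\theta=\pi/2$.} By the symmetry $x\leftrightarrow y$, with $(A,C,r,s,\alpha,\beta,B,D)\mapsto(C,A,s,r,\beta,\alpha,D,B)$, this case reduces to the same computation with weights $(p_1,q_1)$. The conditions $e_3,e_4,e_7,e_8$ play the role of the polynomiality conditions, and the second inequality of $\Lambda^*$ gives the saddle. Once every corner of the polycycle is a hyperbolic saddle and the arcs along the divisors are regular, the blow-up polycycle is hyperbolic.

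\textbf{Main obstacle.} I expect the hard part to be the explicit bookkeeping in Step 2. This means identifying the correct common factor to divide out, and verifying that after this division the origin is an isolated elementary singularity with nonzero eigenvalues, rather than part of a curve of singularities. The same care is needed to confirm that no other singular points remain on the divisor. I would first try writing the field in quasi-homogeneous form for each weight, so the leading exponents appear as the $e_i$ directly.
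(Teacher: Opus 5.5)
\textbf{Same route as the paper, with two improvements.} Your plan has the paper's structure. You reduce to case (iii) of Proposition \ref{p:prop-dir.car-family}, and at $\theta=0$ you use the same charts $z=x^{q_2}/y^{p_2}$, $w=y/x$, $v=y^{p_2}/x^{q_2}$. After your swap $x\leftrightarrow y$, you get exactly the paper's charts $x/y$, $y^{p_1}/x^{q_1}$, $x^{q_1}/y^{p_1}$ at $\theta=\pi/2$. Two of your steps are cleaner than the paper's:
\begin{enumerate}
\item The argument $e_1e_3=4$, combined with $(r-\alpha)(s-\beta)>\alpha\beta$, actually proves $\alpha+\beta<\min\{r,s\}$, hence case (iii) and $p_i\neq q_i$. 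The paper only asserts that this can be checked.
\item The swap is legitimate because it maps $\mathcal{E}^*\cap\Lambda^*$ onto itself. It sends $e_1\leftrightarrow e_3$, $e_2\leftrightarrow e_4$, $e_5\leftrightarrow e_7$, $e_6\leftrightarrow e_8$, and it exchanges the two inequalities of $\Lambda^*$ and the last two of $\Lambda$. Orientation reversal is irrelevant for saddles and flow boxes. So it halves the work.
\end{enumerate}

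\textbf{The gap.} What is missing is the computation that actually carries the theorem, and your prediction of its outcome is wrong exactly where it matters. At $\theta=0$ the eigenvalues are not multiples of $B\alpha-D(s-\beta)$ and $B-D$. On $\Lambda^*$ those two quantities have the same sign (that is the first inequality of $\Lambda^*$), so as written your linear part would be a node, not a saddle.

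With $(p_2,q_2)=(\beta,r-\alpha)$, use the identities $x\dot y-y\dot x=Cx^{2r}+(D-B)x^{2\alpha}y^{2\beta}-Ay^{2s}$ and $(r-\alpha)y\dot x-\beta x\dot y=(B(r-\alpha)-D\beta)x^{2\alpha}y^{2\beta}-\beta Cx^{2r}+(r-\alpha)Ay^{2s}$. Since $x^{2r}=z^2x^{2\alpha}y^{2\beta}$ and $y^{2s}=z^{e_1}w^{e_2}x^{2\alpha}y^{2\beta}$, dividing by $x^{2\alpha-1}y^{2\beta-1}$ gives
\begin{equation*}
\dot z=z\bigl[B(r-\alpha)-D\beta-\beta Cz^{2}+(r-\alpha)Az^{e_1}w^{e_2}\bigr],\qquad \dot w=w\bigl[D-B+Cz^{2}-Az^{e_1}w^{e_2}\bigr].
\end{equation*}
So the saddle comes from the \emph{second} inequality of $\Lambda^*$, and polynomiality comes from $e_1,e_2$. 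In the regular chart one gets
\begin{equation*}
\dot x=Bxv+Ax^{e_7+1}v^{e_8},\qquad \dot v=\beta C-\bigl(B(r-\alpha)-D\beta\bigr)v^{2}-(r-\alpha)Ax^{e_7}v^{e_8+1}.
\end{equation*}
This corrects three of your predictions:
\begin{enumerate}
\item The relevant exponents are $e_7,e_8$, not $e_5,e_6$.
\item $A$ drops out of $V(0,v)$, because $e_7>0$ on $\mathcal{E}$.
\item $V(0,v)\neq 0$ follows from the third inequality of $\Lambda$, namely $C(B(\alpha-r)+D\beta)>0$, not from $AC<0$.
\end{enumerate}
Transported by your swap, the direction $\theta=\pi/2$ uses $e_3,e_4,e_5,e_6$ and the first inequality of $\Lambda^*$; you had the two inequalities reversed.

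\textbf{Your ``main obstacle'' disappears.} The explicit formulas settle it:
\begin{enumerate}
\item Each component is $z$, respectively $w$, times a bracket with nonzero constant term, so the origin is an isolated hyperbolic saddle.
\item On $w=0$ one has $\dot z=z\bigl(B(r-\alpha)-D\beta-\beta Cz^2\bigr)$. Via $v=1/z$, its nonzero zeros are exactly the real roots of $V(0,v)$, which are already excluded.
\item On $z=0$ one has $\dot w=(D-B)w$.
\end{enumerate}
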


\begin{remark}\label{rem-1}
{\rm  In the proof of Theorem \ref{t:Teo-main1} we perform a blow-up $(x,y) \mapsto (w, z_2)$ with $w = y/x$ and $z_2=x^{q_2}/y^{p_2}$ and also the blow-up given by $(x,y) \mapsto (z, w_1)$ with $z = x/y$ and $w_1=y^{p_1}/x^{q_1}$ both designed to reach a hyperbolic saddle singularity associated to each characteristic direction. These changes only work in case that $p_i \neq q_i$ for $i=1,2$, which is, assured when we restrict the exponents to  $\mathcal{E}^*$. In short, two cases must be analyzed separately, namely $p_1=q_1$ and $p_2=q_2$. This is done in Theorem \ref{t:Teo-main2}. }
\end{remark}

\subsection{The minimal model $\dag$}

In this case, we confine ourselves in the specific case $(p_1, q_1) \in W(\mathbf{N}(\mathcal{X}))$ with $p_1=q_1$, equivalently we consider the parameter restriction
\begin{equation}\label{parametersp1q11}
s= \alpha+\beta.
\end{equation}
In this situation we define the exponents subset $\mathcal{E}^\dag \subset \mathcal{E}$ as
$$
\mathcal{E}^\dag = \{ (\alpha, \beta, r, s) \in \mathcal{E} : p_1 = q_1, \ e_i(\alpha, \beta, r, s) \in \mathbb{N} \cup \{0\}, i=7,8\},
$$
where $e_7$ and $e_8$ are defined in \eqref{exponents-ei}. Again, we notice that this set is non-empty: a short list of some $(\alpha, \beta, r, s) \in \mathcal{E}^\dag$ with $\alpha=1, 2$ and small exponents is:
$$
(\alpha, \beta, r, s) \in \{ (1, 1, 3, 2), (1, 1, 4, 2), (1, 2, 5, 3), (2, 2, 5, 4), (2, 4, 7, 6) \}.
$$
We also define the monodromic parameter subset $\Lambda^\dag \subset \Lambda$ by
\begin{equation}\label{mon-sillas-hyper-d}
\Lambda^\dag = \Lambda \cap \{ p_1 =  q_1, \ (B (r -\alpha) - D \beta) (B-D) > 0 \}.
\end{equation}

\begin{theorem}\label{t:Teo-main2}
Let the origin be a monodromic singular point of the polynomial vector field \eqref{e:minimal} where $W(\mathbf{N}(\mathcal{X})) = \{ (p_1, q_1), (p_2, q_2)\}$ with $p_1=q_1$. If  the field \eqref{e:minimal} is restricted to the parameter subset $\mathcal{E}^\dag \cap \Lambda^\dag$, then, in polar coordinates there is only one characteristic direction $\{0\}$, which is of type $\mathcal{A}$ with weights $(p_2, q_2)$, and the associated blow-up polycycle is hyperbolic.
\end{theorem}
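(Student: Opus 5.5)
The plan is to follow the scheme that Remark \ref{rem-1} suggests for the proof of Theorem \ref{t:Teo-main1}, keeping only the blow-up attached to the second edge. The chart attached to the first edge degenerates when $p_1=q_1$ and is replaced by the ordinary polar blow-up.

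\textbf{Step 1: the single characteristic direction.} Under \eqref{parametersp1q11} we have $(p_1,q_1)=(s-\beta,\alpha)=(\alpha,\alpha)\sim(1,1)$. The condition $(r-\alpha)(s-\beta)>\alpha\beta$ from $\mathcal{E}$ becomes $r-\alpha>\beta$, that is, $r>s$. So in polar coordinates the lowest homogeneous part of \eqref{e:minimal} has degree $2s-1$ and consists of the three monomials with coefficients $A,B,D$. The monomial $Cx^{2r-1}$ is of strictly higher degree.

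A direct computation of $\Theta(0,\theta)=\cos\theta\, Q_{2s-1}-\sin\theta\, P_{2s-1}$ gives, up to a positive factor,
$$
\Theta(0,\theta)=\sin^{2\beta}\theta\,\bigl[(D-B)\cos^{2\alpha}\theta-A\sin^{2\alpha}\theta\bigr].
$$
The condition $A(D(s-\beta)-B\alpha)=A\alpha(D-B)<0$ in $\Lambda$ shows that the bracket has constant nonzero sign. Hence the only characteristic direction in $[0,\pi)$ is $\theta_*=0$, with multiplicity $2\beta$. This is consistent with Proposition \ref{p:prop-dir.car-family-2} for the edge $(p_2,q_2)=(\beta,r-\alpha)$.

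\textbf{Step 2: chart \eqref{e:cordA}.} Apply $(x,y)\mapsto(z,w)$ with $w=y/x$ and $z=x^{q_2}/y^{p_2}$. Express $x,y$ as monomials in $(z,w)$; this needs $q_2\neq p_2$, which holds because $r-\alpha>\beta$. Then compute $\dot z,\dot w$ and divide by the largest common monomial factor.

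The integrality conditions $e_7,e_8\in\mathbb{N}\cup\{0\}$ from $\mathcal{E}^\dag$ should be exactly what makes the resulting exponents nonnegative integers, so that the system is polynomial. I would check this by writing out the exponents of each of the four monomials explicitly.

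The linear part at the origin should then be diagonal, with eigenvalues proportional to $B(r-\alpha)-D\beta$ and $B-D$, up to nonzero constants involving $p_2,q_2$. The extra condition in $\Lambda^\dag$, $(B(r-\alpha)-D\beta)(B-D)>0$, would then give opposite-sign eigenvalues and hence a hyperbolic saddle with $\lambda>0$.

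\textbf{Step 3: chart \eqref{e:cordB}.} Use the chart $(x,v)$ with $v=y^{p_2}/x^{q_2}$. Check that, after removing the common power of $x$, one has $X(0,v)=0$ and $V(0,v)\neq0$ along the divisor $x=0$. The sign condition $AC<0$ should be what guarantees that $V(0,v)$ does not vanish. This makes the direction $\theta=0$ of type $\mathcal{A}$ in the sense of Definition \ref{d:typeA}, and the same holds for $\theta=\pi$ by the symmetry remarked after Definition \ref{d:typeB}.

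\textbf{Step 4: assembling the polycycle.} Outside the sectors around $\theta=0,\pi$, the polar exceptional divisor carries a regular flow, because $\Theta(0,\theta)\neq0$ there. Gluing the charts therefore produces a monodromic polycycle whose only singular vertices are the hyperbolic saddles found in Step 2.

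The main obstacle I expect is the bookkeeping in Step 2. One has to identify the correct common factor to divide out, verify polynomiality using $e_7$ and $e_8$, and show that the corner point is an isolated saddle with the two eigenvalues factoring exactly as claimed. I would carry out this computation symbolically in the exponents first, and confirm it on $(\alpha,\beta,r,s)=(1,1,3,2)$.
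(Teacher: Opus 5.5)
Your route is the paper's: the same computation $\Theta(0,\theta)=\sin^{2\beta}\theta\,[(D-B)\cos^{2\alpha}\theta-A\sin^{2\alpha}\theta]$ isolates $\theta=0$, and then you apply the two $\theta=0$ blow-ups from the proof of Theorem~\ref{t:Teo-main1} with $s=\alpha+\beta$. Steps~1 and~2 are right; the eigenvalues at the corner are exactly $B(r-\alpha)-D\beta$ and $D-B$. However, you attach the hypotheses to the wrong charts. As a result, Step~3 omits the only place where $\mathcal{E}^\dag$ is actually needed.

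\textbf{Chart (A).} This chart needs no integrality assumption. When $s=\alpha+\beta$ its exponents are $e_1=2(s-\alpha-\beta)/(r-\alpha-\beta)=0$ and $e_2=2\alpha$. After dividing by the (generally fractional) monomial $x^{2\alpha-1}y^{2\beta-1}$ you get
$\dot z=z[(B(r-\alpha)-D\beta)-\beta Cz^2+A(r-\alpha)w^{2\alpha}]$, $\dot w=w[(D-B)+Cz^2-Aw^{2\alpha}]$.
This system is polynomial for every exponent vector in $\mathcal{E}$ with $p_1=q_1$.

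\textbf{Chart (B).} The conditions $e_7,e_8\in\mathbb{N}\cup\{0\}$ are what make this chart analytic. With $v=y^{\beta}/x^{r-\alpha}$ one gets
$\dot x=x[Bv+Ax^{e_7}v^{e_8}]$, $\dot v=\beta C-(B(r-\alpha)-D\beta)v^2-A(r-\alpha)x^{e_7}v^{e_8+1}$.
For instance, $(\alpha,\beta,r,s)=(1,3,5,4)$ satisfies all conditions of $\mathcal{E}$ with $p_1=q_1$, but gives $e_7=2/3$ and $e_8=5/3$. The field in chart (B) is then not analytic along $\{x=0\}$. Your Step~3 must check this.

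\textbf{Non-vanishing of $V(0,v)$.} The function $V(0,v)=\beta C-(B(r-\alpha)-D\beta)v^2$ is zero-free exactly when $C(B(\alpha-r)+D\beta)>0$, which is the last inequality of $\Lambda$. The condition $AC<0$ alone does not give it. On $\Lambda^\dag$ it also follows by combining $AC<0$, $A(D-B)<0$ and the extra inequality of $\Lambda^\dag$.

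With these corrections your argument is exactly the paper's.
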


\begin{remark}
{\rm We do not write an analogous result to Theorem \ref{t:Teo-main2} analyzing the case $p_2 = q_2$, hence with $p_1 \neq q_1$, because this case reduce to Theorem \ref{t:Teo-main2} taking into account the fact that when  $W(\mathbf{N}(\mathcal{X})) = \{ (p_1, q_1), (p_2, q_2) \}$ by taking change $(x,y) \mapsto \phi(x,y) = (y, x)$, one has  $W(\mathbf{N}(\phi_*\mathcal{X})) = \{ (q_2, p_2), (q_1, p_1) \}$.}
\end{remark}

\subsection{The minimal model $\ddag$}\label{ss:ddag}

The cases discussed in this section are desingularized using a different procedure than those in the previous sections. We take the exponents subset $\mathcal{E}^\ddag \subset \mathcal{E}$ as
\begin{equation}\label{exp-enteros-DD}
\mathcal{E}^\ddag = \{ (\alpha, \beta, r, s) \in \mathcal{E} : \alpha = s - \beta, \ \ n_i(\alpha, \beta, r, s) \in \mathbb{N} \cup \{0\}, i=1,2,3, 4\},
\end{equation}
where
\begin{eqnarray*}
n_1 &=& - \frac{2 (s \alpha + r (\beta-s))}{\beta^2}, \  n_2 =\frac{2 (r (s - \beta) + \beta^2 - s (\alpha + \beta))}{\beta^2},  \\
n_3 &=&  \frac{2 (r-s)}{\beta}, n_4 = \frac{2 (r - s) (r - s + \beta)}{\beta}.
\end{eqnarray*}
Also we define the subset $\Lambda^\ddag \subset \Lambda$ as
\begin{equation}\label{mon-sillas-hyper-d2}
\Lambda^\ddag = \Lambda \cap \{ \big( B r - B \alpha - D \beta \big) \big( D \beta + B (-r + \alpha + \beta) \big) < 0 \}.
\end{equation}
We notice that $\mathcal{E}^\ddag \neq \emptyset$, as for example
$$
(\alpha, \beta, r, s) \in \{ (1,1, 4, 2), (1,1, 5, 2), (1,1, 6, 2), (1,1,7, 2) \} \subset \mathcal{E}^\ddag.
$$

\begin{theorem}\label{t:Teo-main2-new}
Let the origin be a monodromic singular point of the polynomial vector field \eqref{e:minimal} where $W(\mathbf{N}(\mathcal{X})) = \{ (p_1, q_1), (p_2, q_2)\} =\{ (s-\beta, \alpha),  (\beta, r-\alpha) \}$ with $p_1=q_1$.  We   restrict family \eqref{e:minimal} to the parameter subset $\mathcal{E}^\ddag \cap \Lambda^\ddag$ defined in \eqref{exp-enteros-DD} and \eqref{mon-sillas-hyper-d2}. If the polar vector field associated to  weighted polar coordinates $x=\rho^{p_2}\cos(\theta)$, $y=\rho^{q_2-p_2}\sin(\theta)$ is analytic at $\rho=0$ and possesses the only characteristic directions  $\{0,\pi/2\}$ then the this $2$-tuple of characteristic directions are of type $\mathcal{B}$ and the associated polycycle is hyperbolic.
\end{theorem}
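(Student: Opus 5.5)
The plan is to prove Theorem \ref{t:Teo-main2-new} by direct computation in the charts of Definition \ref{d:typeB}, using the restriction $\alpha=s-\beta$ (that is, $p_1=q_1$) throughout. With this restriction the weights are $(p_2,q_2)=(\beta,r-\alpha)=(\beta,r-s+\beta)$, so $q_2-p_2=r-s$. The field \eqref{e:minimal} becomes
$$
\dot x = Ay^{2s-1}+Bx^{2(s-\beta)}y^{2\beta-1},\qquad
\dot y = Cx^{2r-1}+Dx^{2(s-\beta)-1}y^{2\beta}.
$$
The hypothesis of the theorem says that the weighted polar field with $x=\rho^{\beta}\cos\theta$, $y=\rho^{r-s}\sin\theta$ is analytic and has characteristic directions only at $\{0,\pi/2\}$ (and their antipodes). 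I will therefore only need to analyse these two directions, and because the algebraic blow-ups cover $\theta$ and $\theta+\pi$ at once, it suffices to do each direction once.

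\emph{Direction $\theta=0$.} I will apply the blow-up $z=x^{q_2}/y^{p_2}$, $w=y^{p_2}/x^{q_2-p_2}$. Its inverse is $x^{p_2}=zw$ and $y^{p_2}=z^{q_2-p_2}w^{q_2}$, which is the standard toric chart. Writing $\dot z/z$ and $\dot w/w$ as combinations of $\dot x/x$ and $\dot y/y$, each of the four monomials of the field becomes a monomial in $(z,w)$ with rational exponents. I expect that after dividing by the common factor (a time rescaling by a suitable power of $z$ and $w$), the remaining exponents are exactly the quantities $n_1,\dots,n_4$. Hence the hypothesis $n_i\in\mathbb N\cup\{0\}$ in $\mathcal E^\ddag$ is what makes the resulting system polynomial; I will verify this by matching the exponents one by one. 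I will then read off the linear part at the origin, which should have the form $\dot z=-az+\cdots$, $\dot w=bw+\cdots$. Here $a$ and $b$ are linear in $B,D$: one should be proportional to $Br-B\alpha-D\beta$ (the edge-2 quasi-homogeneous combination, also appearing in $\Lambda$), and the other to $D\beta+B(\alpha+\beta-r)$. The defining inequality of $\Lambda^\ddag$ in \eqref{mon-sillas-hyper-d2} is then precisely the condition $\lambda=a/b>0$, so the origin is a hyperbolic saddle of \eqref{e:cordA}. In the chart $v=y^{p_2}/x^{q_2}$ I will check that $x=0$ is invariant, i.e. $X(0,v)=0$. I will also check that $V(0,v)$ is a sum of two terms of the same sign that never vanishes; I expect this to use $AC<0$ together with the second condition of $\Lambda$, so that the divisor arc is regular, as required in \eqref{e:cordB}.

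\emph{Direction $\theta=\pi/2$.} I will proceed in the same way with $z=y/x$, $w=x^{q_2-p_2}/y^{p_2}$, whose inverse is again monomial, and with the chart $u=x/y$. In the $(z,w)$ chart I will again rescale time and use $n_i\in\mathbb N\cup\{0\}$ to get a polynomial system. I will then check that its linear part is a saddle, with the sign again controlled by the $\Lambda^\ddag$ inequality and the third condition in $\Lambda$. In the $(u,y)$ chart I will verify $Y(u,0)=0$ and $U(u,0)\neq0$, the latter coming from $AC<0$ and the relevant condition of $\Lambda$.

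Finally I will assemble the picture. The exceptional divisors, together with the saddles at their corners, form a closed chain around the blown-up origin. Because the origin is monodromic by Theorem \ref{t:prop-mon-family}, the orientations along the regular arcs match, and the chain is a monodromic polycycle all of whose vertices are hyperbolic saddles. I will also check that the charts for $\theta=0$ and $\theta=\pi/2$ overlap consistently on the intermediate divisor arcs. I expect the main obstacle to be the exponent bookkeeping in the first chart: showing that the rescaled exponents are exactly the $n_i$ and that the linear coefficients factor into the two expressions of \eqref{mon-sillas-hyper-d2}. I would do this first for a concrete element such as $(1,1,4,2)$ to fix signs, and then redo it symbolically.
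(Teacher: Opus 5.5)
Your outline follows the paper's route: its proof only says that, proceeding as for Theorems \ref{t:Teo-main1} and \ref{t:Teo-main2}, the blow-ups of Definition \ref{d:typeB} give saddles and flow boxes on $\mathcal{E}^\ddag\cap\Lambda^\ddag$. Doing the bookkeeping you anticipate corrects several of your predictions and breaks one step.

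Put $q_2=r-s+\beta$ and $k=r-s-\beta=q_2-2p_2$. The hypothesis that only $\{0,\pi/2\}$ are characteristic in the $(\beta,r-s)$-weighted polar coordinates forces $k>0$. In the $\theta=0$ chart the inverse is $x^\beta=zw$, $y^{\beta^2}=z^{r-s}w^{q_2}$, not $y^{p_2}=\cdots$ as you wrote. Dividing by $x^{2\alpha-1}y^{2\beta-1}$ gives
\begin{gather*}
\dot z=z\bigl[(q_2B-\beta D)-\beta Cz^2+q_2Az^{n_2}w^{n_1}\bigr],\\
\dot w=w\bigl[(\beta D-(r-s)B)+\beta Cz^2-(r-s)Az^{n_2}w^{n_1}\bigr].
\end{gather*}
So only $n_1,n_2$ occur in this chart, and your matching of the eigenvalues with the two factors of \eqref{mon-sillas-hyper-d2} is correct. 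In the chart $v=y^\beta/x^{q_2}$ one finds $V(0,v)=\beta C+(\beta D-q_2B)v^2$. This is nonvanishing by the \emph{third} condition of $\Lambda$, not by $AC<0$ and the second. The second condition is what makes $U(u,0)=A+(B-D)u^{2\alpha}$ nonvanishing in the $(u,y)$ chart.

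The genuine gap is the saddle chart at $\pi/2$. With $z=y/x$, $w=x^{r-s}/y^\beta$ one has $x^{k}=wz^{\beta}$, and the same rescaling gives
\begin{gather*}
\dot z=z\bigl[(D-B)-Az^{2\alpha}+Cz^{2\beta^2/k}w^{2(r-s)/k}\bigr],\\
\dot w=w\bigl[((r-s)B-\beta D)+(r-s)Az^{2\alpha}-\beta Cz^{2\beta^2/k}w^{2(r-s)/k}\bigr].
\end{gather*}
The exponents here are not $n_3,n_4$. The system is analytic only if $k\mid 2\beta$, and $\mathcal{E}^\ddag$ does not imply this.

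Take $(\alpha,\beta,r,s)=(1,1,6,2)$, which the paper lists in $\mathcal{E}^\ddag$ and which satisfies every hypothesis of the theorem (e.g.\ with $(A,B,C,D)=(-1,1,1,6)$). There the last term is $z^{2/3}w^{8/3}$, so your step ``use $n_i\in\mathbb N\cup\{0\}$ to get a polynomial system'' fails. The field is not analytic, and not even $C^1$ near $\{z=0\}$. Its derivative at the origin is still a hyperbolic saddle, by $\Lambda^\ddag$ together with $AC<0$ and the second and third conditions of $\Lambda$.

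The paper's one-line proof is silent on this as well. To close the argument you need one of two things. Either add the hypothesis $(r-s-\beta)\mid 2\beta$, which holds for $(1,1,4,2)$ and $(1,1,5,2)$. Or treat this corner without relying on an analytic saddle chart, e.g.\ by a finer sequence of blow-ups there.
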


\subsection{Minimal model with non-hyperbolic monodromic polycycle}\label{ss:nonhyp}

In \cite{Ga-Li-Ma-Ma} it is proved that family
\begin{equation}\label{Ej2-Nonlinearity}
\dot{x}= y(\alpha x^2 + b x y+ c y^2),  \ \ \dot{y}= y^2(\alpha x + b y) + x^5,
\end{equation}
has a monodromic singularity at the origin when it is restricted to the parameter's set $\Lambda = \{(\alpha, b, c) \in \mathbb{R}^3 :  \alpha < 0, c < 0 \}$, and Poincar\'e map has the form $\Pi(x) = x + o(x)$.

The family \eqref{Ej2-Nonlinearity} has $W(\mathbf{N}(\mathcal{X})) = \{ (1, 1), (1,2) \}$, it is a minimal model \eqref{e:minimal} only when $b=0$ (just take $(\alpha, \beta, r, s)=(1, 1, 3,2)$ and $(A,B,C,D)=(c, \alpha, 1, \alpha)$) and $\mathcal{X} = \mathcal{X}_\Delta$ when $b \neq 0$. The origin of \eqref{Ej2-Nonlinearity} has associated a non-hyperbolic monodromic polycycle since it contains a saddle-node singularity. This follows after desingularizing the only characteristic direction $\theta_*=0$ with the two blow-ups used in the proof of Theorem \ref{t:Teo-main1}. More explicitly, after the blow-up $(x,y) \mapsto (x, w_2)$ with $w_2 = y/x^{2}$ and the time-rescaling removing the common factor $x^3$, we obtain that \eqref{Ej2-Nonlinearity} becomes
\begin{eqnarray*}
\dot{x} &=& w_2 x (\alpha + b w_2 x + c w_2^2 x^2),\\
\dot{w}_2 &=& 1 - b w_2^3 x - 2 c w_2^4 x^2 - \alpha w_2^2,
\end{eqnarray*}
so that $\dot{w}_2|_{x=0} = 1 - \alpha w_2^2$ has no real roots, since $\alpha < 0$, in $\Lambda$ giving rise to the regular part of the flow. Finally we perform the blow-up $(x, y) \mapsto (w, z_2)$ with $w = y/x$ and $z_2=x^{2}/y$, we remove the common factor $z_2^2 w^3$ and we obtain a differential system
\begin{eqnarray*}
\dot{z}_2 &=& \alpha z_2 + z_2(- z_2^2 + w (b + 2 c w)), \\
\dot{w} &=& w (z_2^2 - c w^2),
\end{eqnarray*}
that has a saddle-node at the origin.

\section{Linear term of the Poincar\'e map}\label{s:linearterm}

Since the pioneering work of N.B. Medvedeva and her collaborators, see \cite{Be-Me,Med1} for instance, it has been well established that the Poincar\'e map of a monodromic point, even in the most degenerate case, possesses a linear principal term, despite being only a semi-regular map. In this section we provide the necessary results to compute the linear term of the Poincar\'e map for the considered monodromic points by taking the appropriate extensions of the minimal models considered before. Our main result are Theorems \ref{t:v1} and \ref{t:v2} below and they are proved in Section \ref{S-Proofs2}.

The first result in this section is concerning degenerate monodromic points with $\mathcal{A}$-type characteristic directions. We use the straightforward generalization when we have $n \geq 2$ characteristic directions of type $\mathcal{A}$ and $\# W(\mathbf{N}(\mathcal{X})) \geq 2$ given in Remark \ref{remark-rotation-A}.

\begin{theorem}\label{t:v1}
Let the origin be a monodromic point of an analytic vector field such that all the characteristic directions $\theta_1,\ldots,\theta_n\in[0,\pi)$, obtained in any weighted polar coordinates, are of type $\mathcal{A}$. Let $\mathcal{A}^*=\{i_1<\cdots < i_k\}\subseteq\{1,\ldots,n\}$ such that $\{\theta_{i_j},\, j=1,\ldots, k\}$ is the set of characteristic directions with odd weights $p_{i_j}$ and $q_{i_j}$, and let
$X_{i_j}$ and $V_{i_j}$ be the components of the second differential system in \eqref{e:sistemesi}, associated with each characteristic direction.   Then, the Poincar\'e map defined for $x\gtrsim 0$ is $\Pi(x)=\eta x+o(x)$ with
\begin{equation}\label{e:tv1}
\eta=\exp\left\{ \mathrm{PV} \int_0^{2\pi} \mathcal{F}(\theta)d\theta+2\,\sum\limits_{_{i_j}\in\mathcal{A}^*} \frac{1}{\lambda_{i_j} p_{i_j}} \mathrm{PV} \int_{-\infty}^{\infty}
\frac{\partial}{\partial x} \left.\left(\frac{X_{i_j}(x,v)}{V_{i_j}(x,v)}\right)\right|_{x=0} dv\right\}
\end{equation}
where $\mathrm{PV}$ stands for the principal value of the integral, and where $\mathcal{F}(\theta)$ denotes the leading term of the differential equation $d\rho/d\theta = \mathcal{F}(\theta)\rho + o(\rho)$, derived from the original system using the initial weighted polar coordinates, and where
$$\lambda_{i_j}=-\left.\left(\frac{\partial W_{i_j}(z,w)/\partial w}{\partial Z_{i_j}(z,w)/\partial z}\right)\right|_{(z,w)=(0,0)}
$$
are the hyperbolicity ratios of the saddles appearing in the corresponding local charts.
\end{theorem}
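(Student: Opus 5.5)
The plan is to decompose the Poincar\'e map $\Pi$ into a composition of transition maps along the hyperbolic polycycle obtained after the blow-ups of Definition \ref{d:typeA}. In the chosen weighted polar coordinates $(\rho,\theta)$, the circle $\rho=0$ is split by the characteristic directions $\theta_1,\ldots,\theta_n$ and their antipodes into open arcs where $\Theta(0,\theta)\neq 0$. Let $\delta>0$ be small, and fix transversal sections $\theta=\theta_i\pm\delta$ and $\theta=\theta_i+\pi\pm\delta$. Then $\Pi$ is a composition of two kinds of maps. The \emph{regular passages} cross arcs $[\theta_i+\delta,\theta_{i+1}-\delta]$. The \emph{corner passages} cross a small sector around each characteristic direction.

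\textbf{Regular passages.} On these arcs $d\rho/d\theta=\mathcal{F}(\theta)\rho+o(\rho)$ is analytic. The first variational equation gives a transition map $\rho\mapsto \exp\big(\int_{\theta_i+\delta}^{\theta_{i+1}-\delta}\mathcal{F}\,d\theta\big)\rho+o(\rho)$.

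\textbf{Corner passages.} Near $\theta_i$ we rotate as in Remark \ref{remark-rotation-A} and use the charts of Definition \ref{d:typeA}. The passage through the sector becomes three pieces: a Dulac map near the hyperbolic saddle of \eqref{e:cordA}, a regular transit along the invariant line $x=0$ in chart \eqref{e:cordB}, and a second Dulac map near the symmetric saddle. A hyperbolic saddle with ratio $\lambda$ has Dulac map $c\,w^{\lambda}(1+o(1))$, and the return through the symmetric saddle has ratio $1/\lambda$. So the composition saddle--regular--saddle is $C\,\rho(1+o(1))$, with the exponents cancelling.

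Computing $C$ is the main step. The regular transit in chart \eqref{e:cordB} is governed by $dx/dv=X/V$ near $x=0$. Its linearization gives the factor $\exp\int \partial_x (X/V)|_{x=0}\,dv$ over $v\in[-M,M]$, where $\pm M$ corresponds to $\theta_i\pm\delta$. The variable $x$ is related to $\rho$ by $x=\rho^{p}\cos\theta$ with the weights $(p,q)$ of the blow-up. Together with the saddle passages, this relation turns a linear factor in $x$ into a power $1/(\lambda p)$ acting on $\rho$. I expect the exponent $1/(\lambda_i p_i)$ in \eqref{e:tv1} to come out of exactly this, with the constants of the Dulac maps cancelling.

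\textbf{Parity and the factor $2$.} If $p_i$ or $q_i$ is even, the chart $v=y^{p}/x^{q}$ (with $x\mapsto -x$ or $y\mapsto -y$) covers the two passages at $\theta_i$ and $\theta_i+\pi$ with opposite orientations. Their contributions then cancel, so only directions in $\mathcal{A}^*$ contribute. When both weights are odd the two passages add, which produces the factor $2$ and the restriction of the sum to $\mathcal{A}^*$.

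\textbf{Passing to principal values.} Finally we let $\delta\to0$ and $M\to\infty$. The divergent pieces $\int\mathcal{F}\,d\theta$ near $\theta_i$ and the tails $\int_{|v|>M}\partial_x(X/V)\,dv$ have singular parts that match and cancel symmetrically, which is what yields the principal values. To check this I would expand $\mathcal{F}$ near $\theta_i$, where it has a simple pole with residue tied to $\lambda_i$, and compare it with the asymptotics of $\partial_x(X/V)|_{x=0}$ as $v\to\pm\infty$ under the relation between $v$ and $\theta$.

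The main obstacle is this matching step: showing that the non-PV divergences cancel exactly and that the saddle constants $c$ drop out, which requires careful bookkeeping of the coordinate relations between $(\rho,\theta)$, $(z,w)$ and $(x,v)$ near the corners.
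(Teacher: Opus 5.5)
Your decomposition is the one the paper uses. Regular passages in polar coordinates have multiplier $\exp\int\mathcal{F}\,d\theta$. Each corner is split into a Dulac map in chart \eqref{e:cordA}, a transit along $\{x=0\}$ in chart \eqref{e:cordB}, and a Dulac map back through the same saddle in the opposite quadrant. Parity then decides whether the pair $\theta_i,\theta_i+\pi$ contributes.

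The gap is the step you flag as the main obstacle, and the mechanism you propose for it cannot work. The ray $\theta=\theta_i\pm\delta$ is not a level set $v=\pm M$: on $y=\tan\delta\,x$ one has $v=\tan^p\delta\,x^{p-q}\to\infty$ as $x\to0$. The angular section and the section in chart \eqref{e:cordB} are separated by the saddle passage and carry independent cut-offs. Moreover, on the polycycle $\theta$ parametrizes the divisor $\{z=0\}$ of chart \eqref{e:cordA} (where $w=\tan\theta$), while $v=1/z$ parametrizes the other divisor $\{w=0\}$. The two meet only at the saddle. So there is no ``relation between $v$ and $\theta$'' under which the pole of $\mathcal{F}$ at $\theta_i$ could be matched against the tails of $\partial_x(X/V)|_{x=0}$ at $v=\pm\infty$. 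In the paper no such cross-matching happens. Each divergence disappears through the symmetry of its own truncation. The Dulac constants drop out because entry and exit lie in opposite quadrants of the same saddle, with symmetric sections.

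The missing ingredient is an explicit formula for the Dulac constants, Lemma~\ref{l:lema8}. Take sections $w=\mp\ve$ (the rays $y=\mp\ve x$) and $z=\mp\delta$ (the curves $v=\mp1/\delta$). The entry and exit constants are $A_1=\exp\{F(-\delta)-\lambda G(-\ve)\}$ and $A_2=\exp\{G(\ve)-F(\delta)/\lambda\}$, with $F,G$ the integrals along the separatrices defined in the proof of Proposition~\ref{p:propotocha}. Hence $A_2A_1^{1/\lambda}$ is the exponential of integrals over $[-\ve,\ve]$ and $[-\delta,\delta]$, and it tends to $1$ by part (d).

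Following an orbit gives $D^\ve=t(\delta)^{1/(\lambda p)}(A_2A_1^{1/\lambda})^{1/(q-p)}$ with $t(\delta)=\exp\int_{-1/\delta}^{1/\delta}\partial_x(X/V)|_{x=0}\,dv$. Since $D^\ve$ does not depend on $\delta$, letting $\delta\to0$ yields the principal value in $v$. Only afterwards does $\ve\to0$ in $\prod_j D^\ve_{\theta_j+\pi}D^\ve_{\theta_j}\cdot\exp\int_{I_\ve}\mathcal{F}\,d\theta$ yield the principal value in $\theta$.

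The same computation corrects your account of the exponent. Type $\mathcal{A}$ directions use ordinary polar coordinates, not $x=\rho^p\cos\theta$. The factor $1/(\lambda p)$ arises as follows:
\begin{itemize}
\item on $y=-\ve x$ one has $|z|=\ve^{-p}|x|^{q-p}$;
\item the Dulac map gives $|w|\sim|z|^{\lambda}$;
\item the identity $x_B^{q-p}=z\,w^p$ at $|z|=\delta$ gives $x_B\sim|x|^{\lambda p}$.
\end{itemize}

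A smaller point concerns parity. For $p$ even the two passages do not cancel each other; each is trivial by itself. The chart $v=y^p/x^q$ folds $y\mapsto-y$, and the transit runs out along a half-line and back. Only for $p$ odd and $q$ even do you get $t$ and $t^{-1}$.
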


Of course, if $\mathcal{A}^*$ is empty, then $
\eta=\exp\left\{ \mathrm{PV} \int_0^{2\pi} \mathcal{F}(\theta)d\theta\right\}$. Notice that, in formula \eqref{e:tv1}, we only count characteristic directions in $[0, \pi)$, not their complementary counterparts in $[\pi, 2\pi)$.

\medskip

The next result concerns degenerate monodromic points with $\mathcal{B}$-type $2$-tuples of  characteristic directions.

\begin{theorem}\label{t:v2}
Let the origin be a monodromic point of an analytic vector field  $\mathcal{Z}$ having $W(\mathbf{N}(\mathcal{Z})) = \{ (p_1,q_1), (p_2, q_2) \}$ and a $2$-tuple of $\mathcal{B}$-type characteristic directions $\theta_* \in \{0,\pi/2\}$ computed using the weighted polar coordinates $x = \rho^{p_2} \cos\theta$ and $y = \rho^{q_2-p_2} \sin\theta$. Then, the Poincar\'e map computed in these weighted polar coordinates with transversal section $\theta=\bar{\theta}\in (0,\pi/2)$ is $\Pi(\rho)=\eta \rho+o(\rho)$ with
\begin{eqnarray}\label{e:tv2}
\eta & = & \exp\left\{ \mathrm{PV} \int_0^{2\pi} \mathcal{F}(\theta)d\theta  +
\frac{2\mathcal{O}}{\lambda_{0}p_2^2}  \mathrm{PV} \int_{-\infty}^{\infty}
\frac{\partial}{\partial x} \left.\left(\frac{X(x,v)}{V(x,v)}\right)\right|_{x=0} dv\right.\\
 & & \left.  -\frac{2}{\lambda_{\frac{\pi}{2}} }  \mathrm{PV} \int_{-\infty}^{\infty}
\frac{\partial}{\partial y} \left.\left(\frac{Y(u,y)}{U(u,y)}\right)\right|_{y=0} d u  \right\}\nonumber
\end{eqnarray}
where $\mathcal{O}=1$ if $p_2$ and $q_2$ are odd and $\mathcal{O}=0$ otherwise; $\mathrm{PV}$ stands for the principal value of the integral;  where $\mathcal{F}(\theta)$ denotes the leading term of the differential equation $d\rho/d\theta = \mathcal{F}(\theta)\rho + o(\rho)$, derived from the original system using the weighted polar coordinates introduced above; and where
$$\lambda_{0,\frac{\pi}{2}}=-\left.\left(\frac{\partial W_{0,\frac{\pi}{2}}(z,w)/\partial w}{\partial Z_{0,\frac{\pi}{2}}(z,w)/\partial z}\right)\right|_{(z,w)=(0,0)}$$
are the hyperbolicity ratios of the saddles appearing in the corresponding local charts \eqref{e:cordA} for the directions $0$ and $\pi/2$, respectively.
\end{theorem}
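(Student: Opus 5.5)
We prove Theorem~\ref{t:v2} by writing $\Pi$ as a composition of transition maps along the desingularized polycycle and collecting the linear coefficient of each piece, exactly as for Theorem~\ref{t:v1} but in the weighted chart $x=\rho^{p_2}\cos\theta$, $y=\rho^{q_2-p_2}\sin\theta$.

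\textbf{Regular sectors.} First cut the circle $\theta\in[0,2\pi)$ into closed arcs avoiding small neighbourhoods $(\theta_*-\delta,\theta_*+\delta)$ of the four characteristic directions $0,\pi/2,\pi,3\pi/2$. On each such arc $\Theta(0,\theta)\neq 0$, so $d\rho/d\theta=\mathcal{F}(\theta)\rho+o(\rho)$ is analytic. The first variational equation then gives a transition map $\rho\mapsto \exp\{\int \mathcal{F}\,d\theta\}\rho+o(\rho)$ along that arc.

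\textbf{Passage near $\theta=0$.} Rewrite the sector near $\theta=0$ in the charts of Definition~\ref{d:typeB}(a). The passage consists of three pieces:
\begin{enumerate}
\item[(1)] a regular transit in chart \eqref{e:cordB}, along the regular orbit $x=0$ with $v$ running over $\mathbb{R}$;
\item[(2)] a Dulac map of the hyperbolic saddle \eqref{e:cordA} at each of the two corners;
\item[(3)] coordinate changes linking $(z,w)$, $(x,v)$ and $(\rho,\theta)$.
\end{enumerate}

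For piece (1), the orbit $x=0$ is regular in $v$. The linear part of the transit $x\mapsto c\,x+o(x)$ is obtained from $dx/dv=X/V$: integrating the variational equation gives $c=\exp\int \partial_x(X/V)|_{x=0}\,dv$, taken between the two section values of $v$.

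For piece (2), each saddle Dulac map is $z\mapsto K z^{\lambda_0}(1+o(1))$. Composed with the monomial changes $z=x^{q_2}/y^{p_2}$ and $w=y^{p_2}/x^{q_2-p_2}$, and expressed in $\rho$, the exponents $\lambda_0$ and $1/\lambda_0$ cancel up to the factor $1/(\lambda_0 p_2)$ in the contribution of the $(x,v)$ transit. A further factor $1/p_2$ comes from $x=\rho^{p_2}\cos\theta$, since $\log x\sim p_2\log\rho$. Together these produce the weight $1/(\lambda_0p_2^2)$.

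When letting $\delta\to0$, the divergent pieces, namely the $\int\mathcal{F}$ near $\theta=0$ and the $v$-integral at $|v|\to\infty$, must cancel against the constants $K$. Writing both as principal values makes the cancellation exact, as in the proof of Theorem~\ref{t:v1}.

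The parity factor $\mathcal{O}$ comes from tracking orientation. The map $(x,y)\mapsto v=y^{p_2}/x^{q_2}$ sends the half-planes $x>0$ and $x<0$ either to the same sheet or to different sheets. When $p_2$ and $q_2$ are both odd, the directions $0$ and $\pi$ traverse the $v$-line in opposite senses. Their contributions then add, giving the factor $2$ with $\mathcal{O}=1$. Otherwise they cancel by symmetry, giving $\mathcal{O}=0$. This is the step I expect to be the main obstacle: it requires careful bookkeeping of signs and of which branch of $x^{1/p_2}$ is used.

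\textbf{Passage near $\theta=\pi/2$.} Repeat the argument with Definition~\ref{d:typeB}(b): chart \eqref{e:cordC} with $u=x/y$, and $dy/du=Y/U$. Here the blow-up $u=x/y$ is a classical linear-weight blow-up, and near $\theta=\pi/2$ we have $y\sim\rho^{q_2-p_2}$. The ratio of logarithms therefore contributes only $1/\lambda_{\pi/2}$, and there is no parity restriction, since $u$ covers both half-planes $y>0$ and $y<0$ coherently.

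The minus sign comes from the fact that, going counterclockwise past $\pi/2$, $u$ decreases, so the orientation of the $u$-integration is reversed relative to $\theta$. The two directions $\pi/2$ and $3\pi/2$ each contribute equally, which gives the factor $2$.

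\textbf{Assembly.} Finally, compose all transitions starting and ending at the section $\theta=\bar\theta\in(0,\pi/2)$ and multiply the linear coefficients. Taking $\delta\to0$, the partial integrals of $\mathcal{F}$ combine into $\mathrm{PV}\int_0^{2\pi}\mathcal{F}$, and the remaining factors yield \eqref{e:tv2}. The $o(\rho)$ remainders compose harmlessly, because every piece is of the form $c\rho(1+o(1))$.
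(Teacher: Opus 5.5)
Your decomposition is the same as the paper's. You use regular sector maps from $d\rho/d\theta=\mathcal{F}(\theta)\rho+o(\rho)$ in the $(p_2,q_2-p_2)$-weighted chart, write each of the four corner passages as saddle, then regular transit, then saddle in the charts of Definition~\ref{d:typeB}, and finish with a principal-value limit. The gap is that the corner bookkeeping, which is the whole content of the theorem, is asserted so as to match the target rather than computed, and the mechanisms you give are not the ones the computation produces.

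\emph{Parity.} Take $p_2,q_2$ both odd. A counterclockwise orbit near $\theta=0$ ($x>0$) and one near $\theta=\pi$ ($x<0$, so $x^{q_2}<0$) both cross chart \eqref{e:cordB} from $v=-1/\delta$ to $v=1/\delta$, that is, in the \emph{same} sense. Since $dx/dv=X/V$ is linear in $x$ to first order, this gives $t_1=t_2$, and the contributions add. Opposite senses occur when $p_2$ is odd and $q_2$ is even; they give $t_2=t_1^{-1}$, i.e.\ cancellation. When $p_2$ is even, each passage is trivial by itself. So ``opposite senses, hence they add'' is backwards.

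\emph{Weight.} In chart \eqref{e:cordA} of Definition~\ref{d:typeB}(a) one has $zw=x^{p_2}$, and the section $\theta=-\tilde{\ve}$ is $w=-\ve$, so $|z_0|=|x_0|^{p_2}/\ve$.
\begin{itemize}
\item The saddle gives $|w_1|\sim|x_0|^{\lambda_0p_2}$.
\item Hence the entry point in chart \eqref{e:cordB} is $|x|=(\delta|w_1|)^{1/p_2}\sim|x_0|^{\lambda_0}$.
\item Therefore the transit multiplier $t$ enters the $x$-map as $t^{1/\lambda_0}$, and the $\rho$-map as $t^{1/(\lambda_0p_2)}$.
\end{itemize}
Your own phrase ``expressed in $\rho$ \dots\ the factor $1/(\lambda_0p_2)$'' is thus already the complete answer. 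The ``further factor $1/p_2$'' double-counts the conversion $x=\rho^{p_2}\cos\theta$. If you meant the first factor at the level of $x$, it is $1/\lambda_0$, not $1/(\lambda_0p_2)$.

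This is exactly what the paper's displayed chain in the proof of Proposition~\ref{p:propotochb} yields: it ends with $t_j^{1/\lambda}(A_{i+1}A_i^{1/\lambda})^{1/p}x$. The $p_2^2$ in the statement enters only through the next line, which reverts to the type-$\mathcal{A}$ exponent $t_j^{1/(\lambda p)}$. So with $\lambda_0$ measured in $w=y^{p_2}/x^{q_2-p_2}$, as the statement specifies, no bookkeeping gives $2\mathcal{O}/(\lambda_0p_2^2)$; the composition gives $2\mathcal{O}/(\lambda_0p_2)$. The two agree only when $p_2=1$ (as in Proposition~\ref{p:propoexttipusb}), or if $\lambda_0$ is instead measured in the variable $w^{1/p_2}$.

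\emph{Direction $\pi/2$.} Your conclusion there is right, but the reason is not $y\sim\rho^{q_2-p_2}$; converting by that weight would introduce a spurious $1/(q_2-p_2)$. The actual reason: from $y^{q_2-2p_2}=wz^{q_2-p_2}$, the entry point in chart \eqref{e:cordC} satisfies $y\sim\rho_0^{\lambda_{\pi/2}}$, which is why $t$ enters as $t^{1/\lambda_{\pi/2}}$.

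\emph{Saddle constants.} The constants $K$ do not absorb the singular part of $\int\mathcal{F}$. What you need is Lemma~\ref{l:lema8}(d): the entry and exit constants combine to $\exp(R_1(\ve)+R_2(\delta))\to 1$. The singularity of $\mathcal{F}$ is handled separately by the principal value as the sector width shrinks.
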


We observe that $\mathrm{PV} \int_0^{2\pi} \mathcal{F}(\theta)d\theta = 0$ in \eqref{e:tv2} when either $p_2$ or $q_2-p_2$ is even, see \cite{GaGi3}. The proofs of the above results are technical and are provided in the proofs sections.

\section{The first and second extensions of the minimal models}\label{s:extensions}

Let $\mathcal{X}$ denote the vector field of the minimal model \eqref{e:minimal}.
We introduce two types of extensions of the vector field $\mathcal{X}$,  related to $\mathbf{N}(\mathcal{X})$. The \emph{first extension} $\mathcal{X}^{[1]}$ is the most general vector field such that $\mathcal{X} \subset \mathcal{X}^{[1]}$, meaning that $\mathcal{X}$ is a member of the family $\mathcal{X}^{[1]}$; $\mathbf{N}(\mathcal{X}^{[1]}) = \mathbf{N}(\mathcal{X})$; and $\mathcal{X}^{[1]} = (\mathcal{X}^{[1]})_\Delta$. That means we add the maximum number of monomials to $\mathcal{X}$ that are associated to points on the edges of $\mathbf{N}(\mathcal{X})$ but they are not vertices. In other words, since $W(\mathbf{N}(\mathcal{X})) = \{ (p_1, q_1), (p_2, q_2) \}$, then  $\mathcal{X}^{[1]}$ is the expansion in $(p_k, q_k)$-quasihomogeneous vector fields with the weighted degrees $r_1$ and $r_2$ given in
\begin{equation}\label{The-rk}
r_1 = s (2 \alpha - 1) + \beta - \alpha, \, r_2 = \alpha - \beta + r (2 \beta - 1)
\end{equation}
(see the proof of Proposition \ref{t:prop-mon-family}).
In other words, $\mathcal{X}^{[1]}=(\mathcal{X}^{[1]})_\Delta= \mathcal{X}_{r_1} \oplus  \mathcal{X}_{r_2}$.
In summary, the monomials $(a x^i y^{j - 1}) \partial_x + (b x^{i-1} y^{j}) \partial_y$ with $(i,j) \in \mathbb{N}^2$ are added if and only if $p_k i + q_k (j-1) = p_k + r_k$ and $p_k (i-1) + q_k j = q_k + r_k$ for some $k \in \{1,2\}$ and either $0 < i < 2 \alpha$ if $k=1$ or $2 \alpha < i < 2 r$ if $k=2$.
\medskip

Finally, we introduce the \emph{second extension}, $\mathcal{X}^{[2]}$, by adding to $\mathcal{X}^{[1]}$ arbitrary higher-order terms with respect to $\mathbf{N}(\mathcal{X})$. More precisely, we define $\mathcal{X}^{[2]} = \mathcal{X}^{[1]} + \mathcal{X}_*$ where
$$
\mathcal{X}_* = \Big(\sum_{(i, j) \in S} a_{i,j-1} x^i y^{j-1} \Big) \partial_x + \Big(\sum_{(i, j) \in S} b_{i-1,j} x^{i-1} y^{j} \Big) \partial_y
$$
being $(i,j) \in S \subset \mathbb{N}^2$ if and only if $(i, j)$ lies in the upper half plane with respect to $\mathbf{N}(\mathcal{X})$. In other words,
\begin{equation}\label{The-set-S-HOT}
S = \{ (i,j) \in \mathbb{N}^2 : p_k i + q_k (j-1) > p_k + r_k, p_k (i-1) + q_k j > q_k + r_k \}.
\end{equation}

Let $\Omega \subset [0,2\pi)$ be the set of characteristic directions of $\mathcal{X}$.
It is clear that if there exists a specific finite sequence of blow-ups desingularizing the origin of $\mathcal{X}^{[1]}$ (a \emph{blow-up scheme}), then the origin of $\mathcal{X}$ follows the same scheme, since $\mathcal{X} \subset \mathcal{X}^{[1]}$; but the converse is not generally true (see the example in Section \ref{S-Monstruo-1}).

We emphasize that the first extension is critical in the sense that it may modify the monodromy at the origin as well as the cardinality $\# \Omega$. In particular, when $\mathcal{X}^{[1]}$ has a different set of characteristic directions than $\mathcal{X}$, these vector fields cannot share the same blow-up desingularization process. On the contrary, the second extension preserves both the monodromy at the origin and the set $\Omega$ of $\mathcal{X}^{[1]}$. In our examples, we also observe that the second extension preserves the value of $\eta$; however, we do not currently have a proof of this fact in the general setting. This behaviour suggest to consider the following open problem: Set $\mathcal{X} = \mathcal{X}_\Delta + \cdots$, where the dots mean higher order terms with respect to $\mathbf{N}(\mathcal{X})$. Suppose that the Poincar\'e maps $\Pi(x) = \eta x + o(x)$ and $\Pi_\Delta(x) = \eta_\Delta x + o(x)$ of the origin of $\mathcal{X}$ and $\mathcal{X}_\Delta$, respectively, are well-defined. Then,  is it true that $\eta = \eta_\Delta$?

\subsection{Changes in the blow-up scheme for first extension fields} \label{S-Monstruo-1}

In this section, we provide a simple example illustrating how the first extension may alter the blow-up scheme at the origin. We consider a vector field $\mathcal{Y} = \mathcal{X} + (a x^i y^{j - 1}) \partial_x + (b x^{i-1} y^{j}) \partial_y$ with $(i,j) \in \mathbb{N}^2$ such that $\mathcal{Y} \subset \mathcal{X}^{[1]}$. In other words such that $(i,j)$ lies on a particular edge of $\mathbf{N}(\mathcal{X})$.

For the sake of simplicity, we assume that $p_1 = q_1$.  If we want to desingularize the characteristic direction $\theta_1^* = 0$ of $\mathcal{Y}$ by doing the blow-up $(x,y) \mapsto (w_2, x)$ with $w_2 = y^{p_2}/x^{q_2}$ and the corresponding time-rescaling, as we did in the proof of Theorem \ref{t:Teo-main2}, the outcome is that we get a polynomial differential system $\dot{x} = X(x, w_2)$, $\dot{w}_2 = W_2(x, w_2)$  if and only if $(i,j)$ is such that
\begin{equation}\label{rest-exp-regularpart-ext}
\left(-1 + \frac{j}{s-\alpha}, i-2 r + \frac{j(r-\alpha)}{s-\alpha}\right) \in \mathbb{N}^2 \cup \{0\},
\end{equation}
satisfies the conditions in $\mathcal{E}^\dag$. This condition on $(i,j)$ gives the allowed points on $\mathbf{N}(\mathcal{X})$ (different from the vertices) for which the geometry of the desingularizing blow-ups of $\mathcal{X}$ and $\mathcal{Y}$ coincide. In other words: in order that $\mathcal{X}$ and $\mathcal{Y}$ share the same desingularization scheme, the restriction \eqref{rest-exp-regularpart-ext} needs to be imposed.

For example, adding the point $(i,j)=(3, 9)$ that lies on the first edge of $\mathbf{N}(\mathcal{X})$ for the minimal model with exponents $(\alpha, \beta, r, s) = (2, 4 , 8, 6) \in \mathcal{E}^\dag$, the resulting vector field  does not satisfy \eqref{rest-exp-regularpart-ext}. The vector field $\mathcal{Y}$ of this example is $\dot{x} = B x^4 y^7 + A y^{11} + a x^3 y^8$, $\dot{y} = C x^{15}+ D x^3 y^{8} + b x^2 y^9$.

 The associated minimal model $\mathcal{X}$ fits within the framework of Theorem \ref{t:Teo-main2} when it is restricted to the parameter set \eqref{mon-sillas-hyper-d} that becomes $\Lambda^\dag = \{ A C < 0,  A (D-B) < 0, C(3 B -2 D) < 0 \}$. In this case, the characteristic direction $\{0\}$ of the associated minimal model is of type $\mathcal{A}$ and, therefore, can be  desingularized with the scheme that characterizes these type of directions. However, this scheme does not applies to  $\mathcal{Y}$.

In order to desingularize the characteristic direction $\{\theta=0\}$ of the origin of $\mathcal{Y}$, we perform the following blow-up sequence: We consider the change of coordinates $(x, y) \mapsto (x, z)$, where $z = y/x$, and rescale by dividing by $x^{10}$. To further desingularize point $(x,z)=(0,0)$, we first apply $(x, z) \mapsto (x, z_2)$ with $z_2 = z/x$, and rescale by dividing by $x^3$, so that the resulting polynomial vector field $\dot{x} = X(x, z_2)$, $\dot{z}_2 = Z_2(x, z_2)$ satisfies $Z_2(0, z_2) = C \neq 0$, providing a regular  flow. Secondly, we perform the transformation $(x, z) \mapsto (w, z)$ with $w = x/z$, followed by a division by $z^3$. This results in a polynomial vector field $\dot{w} = W(w, z)$, $\dot{z} = Z(w, z)$, whose origin is again degenerated.

To desingularize the point $(w,z)=(0,0)$, next, we apply $(w, z) \mapsto (w, u)$ with $u = z/w$ and divide by $w^4$, obtaining the polynomial vector field $\dot{w} = -C w + o(w,u)$, $\dot{u} = 2C u + o(w,u)$, so that the origin becomes a hyperbolic saddle. Finally, we apply the blow-up $(w, z) \mapsto (v, z)$ with $v = w/z$ and divide by $z^4$ to the system $\dot{w} = W(w, z)$, $\dot{z} = Z(w, z)$, resulting in the polynomial vector field $\dot{v} = (3B - 2D) v + o(v,z)$, $\dot{z} = (D - B) z + o(v,z)$, so that the origin becomes a hyperbolic saddle on $\Lambda^\dag$.

In summary, the sequence of elementary blow-ups is: $(x, y) \mapsto (x, z_2) = (x, y/x^2)$ that gives a regular part;  $(x, y) \mapsto (w, u) = (x^2/y, y^2/x^3)$ that produces a hyperbolic saddle; and, finally, $(x, y) \mapsto (v, z) = (x^3/y^2, y/x)$ that yields the last hyperbolic saddle. Therefore, we conclude that the blow-up scheme for $\mathcal{Y} \subset \mathcal{X}^{[1]}$ differs from that for $\mathcal{X}$, given by  Theorem \ref{t:Teo-main2}.  This example also shows that the minimal model $\mathcal{X}$ admits at least two different desingularizations.

\section{Extensions of particular minimal models}\label{s:examples}

In this section we compute the liner term of the Poincar\'e map of extensions of concrete examples of minimal models. The proofs are given in Section \ref{S-Proofs3}.

\subsection{An example of extension of minimal models within the framework of Theorem \ref{t:Teo-main1}}

We consider the family \eqref{e:minimal} with exponents given by the values $(\alpha, \beta, r, s) = (1, 1, 3, 3) \in \mathcal{E}^*$, that is, the vector field $\mathcal{X}$ given by
\begin{equation}\label{Ex-1}
\dot{x} = B x^2 y + A y^{5}, \ \ \dot{y} = D x y^2 + C x^{5},
\end{equation}
and we restrict the family to $\Lambda = \Lambda^* = \{ A C < 0, A (2 D-B) < 0, C (D-2 B) > 0 \}$. This family has a reversible center at the origin with $W(\mathbf{N}(\mathcal{X})) = \{(2, 1), (1,2) \}$ with characteristic directions $\Omega_{11} = \{0,\pi/2\}$ and a desingularization process like the one in Theorem \ref{t:Teo-main1}.

\begin{proposition}\label{prop-1221}
The first extension $\mathcal{X}^{[1]}$ of the minimal model  $\mathcal{X}$ given by \eqref{Ex-1}, with the assumptions of Theorem \ref{t:Teo-main1} is:
\begin{equation}\label{Ex-1-ext}
\dot{x} = B x^2 y + A y^{5} + a_1 x y^3 + a_2 x^4, \ \ \dot{y} = D x y^2 + C x^{5} +  b_1 y^4 + b_2 x^3 y.
\end{equation}
The origin of \eqref{Ex-1-ext} is  monodromic when restricted to the parameter space $\hat{\Lambda} = \Lambda \cap \{ \Delta_1 < 0, \Delta_2 < 0 \}$ with $\Lambda = \{ (A, B,C,D) \in \mathbb{R}^4 : A C < 0, A (2 D-B) < 0, C (D-2 B) > 0 \}$, $\Delta_1 = (2 a_2-b_2)^2 + 4 C (2 B-D)$, and $\Delta_2 = (2 b_1-a_1 ))^2 + 4 A (2 D-B)$. Moreover its Poincar\'e map is  $\Pi(x) = x + o(x)$.
The second extension $\mathcal{X}^{[2]}$, is given by
\begin{equation}\label{Ex-1-ext-HOT}
\begin{array}{l}
\dot{x} = B x^2 y + A y^{5} + a_1 x y^3 + a_2 x^4 + \sum_{(i, j) \in S} a_{i,j-1} x^i y^{j-1},  \\
\dot{y} = D x y^2 + C x^{5} + b_1 y^4 + b_2 x^3 y + \sum_{(i, j) \in S} b_{i-1,j} x^{i-1} y^{j},
\end{array}
\end{equation}
where $S = \{ (i,j) \in \mathbb{N}^2 : 2i + j > 6, i + 2j > 6 \}$. The origin of \eqref{Ex-1-ext-HOT}, when restricted to $\hat{\Lambda}$, is a monodromic singularity with the same linear coefficient in  its Poincar\'e map, that is $\Pi(x) = x + o(x)$.
\end{proposition}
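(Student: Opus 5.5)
The plan is to verify each of the three assertions of Proposition \ref{prop-1221} directly, working from the minimal model \eqref{Ex-1}, which has $W(\mathbf{N}(\mathcal{X}))=\{(2,1),(1,2)\}$, $r_1=r_2=3$ by \eqref{The-rk}, and then to feed the result into Theorem \ref{t:v1}.

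\textbf{Step 1: the form of $\mathcal{X}^{[1]}$.} I will list the lattice points satisfying the edge conditions stated after \eqref{The-rk}, respecting the bounds $0<i<2\alpha=2$ for the first edge and $2<i<6$ for the second.
\begin{itemize}
\item On the $(2,1)$-edge the conditions are $2i+(j-1)=5$ and $2(i-1)+j=4$. The only admissible point is $i=1$, $j=4$, which produces the monomials $a_1xy^3\,\partial_x$ and $b_1y^4\,\partial_y$.
\item On the $(1,2)$-edge the analogous computation produces $a_2x^4\,\partial_x$ and $b_2x^3y\,\partial_y$.
\end{itemize}
This gives \eqref{Ex-1-ext}.

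\textbf{Step 2: monodromy of $\mathcal{X}^{[1]}$.} I will use the Brunella--Miari criterion recalled in Section \ref{s:notation}.
\begin{itemize}
\item For the $(2,1)$-component $P_1\partial_x+Q_1\partial_y$, compute $2xQ_1-yP_1=y^2\bigl[(2D-B)x^2+(2b_1-a_1)xy^2-Ay^4\bigr]$. The bracket has no real zero in $x/y^2$ exactly when $\Delta_2<0$.
\item Symmetrically, the $(1,2)$-component gives $x^2\bigl[(2B-D)y^2+(2a_2-b_2)x^2y-Cx^4\bigr]$ up to sign. It has no zero off the axes iff $\Delta_1<0$.
\end{itemize}
So on $\hat\Lambda$ the principal part is non-degenerate. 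The remaining monodromy conditions, namely the rotation along the axes and the sign conditions at the vertices, depend only on the vertex coefficients. They are therefore exactly those of Theorem \ref{t:prop-mon-family}, which yields $\Lambda$.

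\textbf{Step 3: applying Theorem \ref{t:v1}.} I will first check that the characteristic directions in polar coordinates are still $\{0,\pi/2\}$ and of type $\mathcal{A}$. To do so I redo the blow-ups of the proof of Theorem \ref{t:Teo-main1}, i.e.\ those of Remark \ref{rem-1}, on \eqref{Ex-1-ext}, and verify two things:
\begin{itemize}
\item the added monomials keep the charts polynomial;
\item the added monomials only contribute terms vanishing at the saddles, or leave the regular charts non-vanishing on the exceptional divisor thanks to $\Delta_i<0$.
\end{itemize}
I expect this bookkeeping to be the main obstacle. The linear parts of the saddles should still be governed by $B,D$ as in $\Lambda^*$, so the chart construction carries over.

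Since neither $(2,1)$ nor $(1,2)$ has both entries odd, $\mathcal{A}^*=\emptyset$ and $\eta=\exp\{\mathrm{PV}\int_0^{2\pi}\mathcal{F}\,d\theta\}$. The lowest homogeneous part is the cubic $(Bx^2y,Dxy^2)$; the degree-$4$ and degree-$5$ additions are of higher order. Hence
\[
\mathcal{F}(\theta)=\frac{B\cos^2\theta+D\sin^2\theta}{(D-B)\cos\theta\sin\theta},
\]
which is odd under $\theta\mapsto-\theta$. Its principal value over a period therefore vanishes, giving $\eta=1$.

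\textbf{Step 4: the second extension.} For \eqref{Ex-1-ext-HOT}, the points of $S$ lie strictly above both edges. Checking $i+j=4$, the cubic homogeneous part is not altered, so $\mathcal{F}$ is unchanged. In every blow-up chart the new terms carry strictly positive powers of the exceptional variable after the same time rescaling. Hence the saddles, their ratios, the regular flows and monodromy (by \cite{BM}) all persist, and again $\eta=1$.
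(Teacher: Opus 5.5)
Your proposal is correct, and for the type-$\mathcal{A}$ check and the value of $\eta$ it follows the paper. You use the same blow-ups $(z_2,w)=(x^2/y,y/x)$ and $(x,w_2)=(x,y/x^2)$, with their analogues for $\pi/2$. You then apply Theorem~\ref{t:v1} with $\mathcal{A}^*=\emptyset$ and $\mathcal{F}=(B\cot\theta+D\tan\theta)/(D-B)$, whose principal value vanishes; your oddness argument supplies the reason the paper leaves implicit.

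The genuine difference is how you prove monodromy. The paper reads it off the desingularization. It obtains hyperbolic saddles with eigenvalues $2B-D,\,D-B$ and $D-B,\,B-2D$, and regular charts in which $\dot w_2|_{x=0}=-C+(2a_2-b_2)w_2+(2B-D)w_2^2$ and $\dot z_1|_{y=0}=-A+(2b_1-a_1)z_1+(2D-B)z_1^2$ have no real roots; that is where $\Delta_1,\Delta_2$ come from. You obtain the same discriminants from the edge angular polynomials, i.e.\ from the absence of strong factors of the edge Hamiltonians, as in the proof of Theorem~\ref{t:prop-mon-family}. These are literally the same quadratics: $\dot w_2|_{x=0}$ is, up to sign, $x^{-6}(xQ-2yP)$ for the $(1,2)$-component restricted to $y=w_2x^2$. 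For $\mathcal{X}^{[2]}$ you invoke \cite{BM} with $(\mathcal{X}^{[2]})_\Delta=\mathcal{X}^{[1]}$ nondegenerate, which gives persistence of monodromy with no chart computation; the paper only asserts that the charts are unaffected. Your route explains why $\Delta_1,\Delta_2$ and monodromy are blind to higher-order terms. The paper's route is more economical, since the blow-ups are needed anyway for Theorem~\ref{t:v1}; once Step~3 is done, your Step~2 is logically redundant.

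Two points of precision. First, the Brunella--Miari statement of Section~\ref{s:notation} is vacuous for $\mathcal{X}^{[1]}=(\mathcal{X}^{[1]})_\Delta$; what you actually use there is the criterion of \cite{AGR}. Its vertex conditions reduce to $AC<0$, not to all of $\Lambda$. The other two inequalities of $\Lambda$ follow from $\Delta_2<0$ and $\Delta_1<0$, so $\hat\Lambda=\{AC<0,\Delta_1<0,\Delta_2<0\}$ and your conclusion stands. Second, the bookkeeping you flag as the main obstacle is short:
\begin{itemize}
\item With $x=z_2w$, $y=z_2w^2$ and division by $z_2^2w^3$, a support point $(i,j)$ contributes $z_2^{i+j-4}w^{i+2j-6}$ to the brackets.
\item In the chart $y=w_2x^2$, after division by $x^3$, it contributes terms carrying at least the factor $x^{i+2j-6}$.
\end{itemize}
On $S$ one has $i+j\ge 5$ and $i+2j\ge 7$, so every new term vanishes at the saddle and on $\{x=0\}$. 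The direction $\pi/2$ follows by the $(i,j)\mapsto(j,i)$ symmetry of the diagram.
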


\subsection{An example within the framework of Theorems \ref{t:Teo-main2} and \ref{t:v1}}

We consider the minimal model \eqref{e:minimal} with $(\alpha, \beta, r, s) = (1, 1, 3, 2) \in \mathcal{E}^\dag$ and associated vector field $\mathcal{X}$, that is,
\begin{equation}\label{Ex-2}
\dot{x} = B x^2 y + A y^3, \\ \dot{y} = C x^5 + D x y^2,
\end{equation} restricted to the monodromic space $\Lambda^\dag = \Lambda = \{ (A, B,C,D) \in \mathbb{R}^4 : A C < 0, A (D - B) < 0, C  (D- 2 B) > 0 \}$ associated to the origin and having only one characteristic direction $\theta_*=0$. The weights are $W(\mathbf{N}(\mathcal{X})) = \{ (1,1), (1,2) \}$. This vector field satisfies, therefore, the assumptions of Theorem \ref{t:Teo-main2}.

\begin{proposition}\label{p:novapropTh2}
The first extension $\mathcal{X}^{[1]}$ of the minimal model  $\mathcal{X}$ given by \eqref{Ex-2}, with the assumptions of Theorem \ref{t:Teo-main2} is:
\begin{equation}\label{Toy1-XD}
\dot{x} = B x^2 y + A y^3 + a_1 x y^2 + a_2 x^4, \ \ \dot{y} = C x^5 + D x y^2 + b_1 y^3 + b_2 x^3 y.
\end{equation}
The origin of \eqref{Toy1-XD} is a monodromic singularity when we restrict it to the parameter space $\hat{\Lambda} = \Lambda \cap \{ \delta_1 < 0, \Delta_1 < 0 \}$ with $\Lambda = \{ (A, B,C,D) \in \mathbb{R}^4 : A C < 0, A (D - B) < 0, C  (D- 2 B) > 0 \}$ and $\delta_1 = (a_1 - b_1)^2 + 4 A (D -B)$, $\Delta_1 = (2 a_2-b_2)^2 + 4 C (2B-D)$.  The second extension $\mathcal{X}^{[2]}$ is
\begin{equation}\label{Toy1-HOT}
\begin{array}{l}
\dot{x} = B x^2 y + A y^3 + a_1 x y^2 + a_2 x^4 + \sum_{(i, j) \in S} a_{i,j-1} x^i y^{j-1},  \\
\dot{y} = C x^5 + D x y^2 + b_1 y^3 + b_2 x^3 y + \sum_{(i, j) \in S} b_{i-1,j} x^{i-1} y^{j},
\end{array}
\end{equation}
where $S = \{ (i,j) \in \mathbb{N}^2 : i+j > 4, i + 2 j > 6 \}$. Then the origin of \eqref{Toy1-HOT} restricted to $\hat{\Lambda}$ is a monodromic singularity.

Moreover if we fix $D-B>0$, to ensure that the orbits turn around in counterclockwise direction, the Poincar\'e map of both extensions $\mathcal{X}^{[1]}$ and  $\mathcal{X}^{[2]}$ is $\Pi(x) = \eta x + o(x)$,
with $$
\eta=\exp\left\{
2 \, \mathrm{PV} \int_{0}^{2\pi}
\frac{(b_1 +   \cot\theta (A + D +  \cot\theta (a_1 +  B \cot\theta))) \sin^2\theta }{ D-A - B  + (A - B +  D) \cos(2 \theta) + (b_1-a_1 ) \sin^2 \theta)} d\theta\right\}.
$$
\end{proposition}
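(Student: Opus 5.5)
The plan has four parts: compute the first extension from the two edges, prove monodromy with the Brunella–Miari criterion, check that higher-order terms do not change the picture, and compute $\eta$ with Theorem \ref{t:v1}.

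\textbf{Step 1: the first extension.} Here $(\alpha,\beta,r,s)=(1,1,3,2)$, so by \eqref{The-rk} the weighted degrees are $r_1=2$ for $(p_1,q_1)=(1,1)$ and $r_2=4$ for $(p_2,q_2)=(1,2)$.
\begin{itemize}
\item On the first edge the admissible points satisfy $i+j=4$ with $0<i<2$, so $(i,j)=(1,3)$. This contributes $a_1xy^2\partial_x+b_1y^3\partial_y$.
\item On the second edge the admissible points satisfy $i+2j=6$ with $2<i<6$, so $(i,j)=(4,1)$. This contributes $a_2x^4\partial_x+b_2x^3y\partial_y$.
\end{itemize}
This gives \eqref{Toy1-XD}. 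The set $S$ in \eqref{The-set-S-HOT} then reduces to the stated inequalities $i+j>4$, $i+2j>6$.

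\textbf{Step 2: monodromy of $\mathcal{X}^{[1]}$ and $\mathcal{X}^{[2]}$.} I would use the Brunella–Miari criterion recalled in Section \ref{s:notation}. It requires that each quasihomogeneous component $\mathcal{X}_{r_k}$ has no invariant curves (equivalently, no singularities) outside the axes, plus the sign conditions on the axes already encoded in $\Lambda$.
\begin{itemize}
\item For the $(1,1)$ component $(By+a_1y^2/x\cdots)$, i.e.\ the homogeneous cubic part, I restrict to $x=wy$-type directions. The characteristic polynomial $x\dot y-y\dot x$ of the cubic part is $y^3\big((D-B)x^2+(b_1-a_1)xy-Ay^2\big)$ up to terms. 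Its only real root should be $y=0$, which holds exactly when its discriminant $\delta_1<0$.
\item For the $(1,2)$ component, the corresponding quasihomogeneous polynomial $q\,x\dot y-p\,y\dot x$ restricted to $y=wx^2$ gives a quadratic in $w$ with discriminant $\Delta_1$. Requiring $\Delta_1<0$ excludes nontrivial branches.
\end{itemize}
Since $\mathcal{X}^{[2]}$ has the same principal part $\mathcal{X}_\Delta=\mathcal{X}^{[1]}$, non-degeneracy and the cited result (\cite{BM}, \cite[Theorem 6]{GaGi3}) transfer monodromy to $\mathcal{X}^{[2]}$.

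\textbf{Step 3: the only characteristic direction is of type $\mathcal{A}$.}
\begin{itemize}
\item In polar coordinates the lowest-degree part is the cubic homogeneous part. Its angular factor is $y\cdot(\text{quadratic})$, which is non-vanishing off $\theta=0$ by $\delta_1<0$. So $\theta_*=0$ is the only characteristic direction.
\item Following the proof of Theorem \ref{t:Teo-main2}, I blow up with $(x,v)=(x,y/x^2)$ and $(z,w)=(x^2/y,y/x)$. Higher-order terms from $S$ only add terms vanishing on the exceptional divisor after rescaling, so the regular chart persists.
\item In the regular chart, $V(0,v)$ is the $\Delta_1$-quadratic, which has no zero. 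In the other chart the linear part at the origin is governed by the $\mathcal{X}_\Delta$ coefficients. This gives a hyperbolic saddle with hyperbolicity ratio $\lambda$ depending only on $B,D$, and the saddle is unchanged by $a_i,b_i$ and by terms from $S$. This last point needs an explicit check and is where care is needed.
\end{itemize}

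\textbf{Step 4: the linear coefficient $\eta$.} I apply Theorem \ref{t:v1}.
\begin{itemize}
\item The weights $(p,q)=(1,2)$ are not both odd, so $\mathcal{A}^*=\emptyset$ and $\eta=\exp\{\mathrm{PV}\int_0^{2\pi}\mathcal{F}(\theta)d\theta\}$.
\item In ordinary polar coordinates the leading part is the cubic field, so $d\rho/d\theta=\mathcal{F}(\theta)\rho+o(\rho)$ with $\mathcal{F}=\dfrac{\cos\theta P_3+\sin\theta Q_3}{\cos\theta Q_3-\sin\theta P_3}$. Here $P_3=Bx^2y+Ay^3+a_1xy^2$ and $Q_3=Dxy^2+b_1y^3$ are evaluated at $(\cos\theta,\sin\theta)$.
\item Higher-order terms only enter the $o(\rho)$ remainder, which gives the same $\eta$ for $\mathcal{X}^{[2]}$.
\item Simplifying, the common factor $\sin\theta$ cancels. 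After rewriting in $\cot\theta$ and $\cos 2\theta$ (the factor $2$ comes from the symmetry $\theta\mapsto\theta+\pi$ and the double-angle rewriting), this gives the stated integrand. The denominator's sign fixes the orientation, hence the choice $D-B>0$.
\end{itemize}

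\textbf{Main obstacle.} The main obstacle is Step 3: verifying that the blow-up scheme of Theorem \ref{t:Teo-main2} still gives a hyperbolic saddle and a regular chart after adding the edge and upper monomials. In particular I must check that none of them produces non-polynomial terms or alters the saddle's linear part; this is a direct monomial-by-monomial exponent computation. A further subtlety is justifying the principal value at $\theta=0,\pi$, where $\mathcal{F}$ has a pole. There I rely on Theorem \ref{t:v1} as stated.
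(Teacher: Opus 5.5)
Your Steps 1, 3 and 4 coincide with the paper's proof. The same charts $(x,y/x^2)$ and $(x^2/y,y/x)$ certify that $\theta_*=0$ is of type $\mathcal{A}$. Theorem \ref{t:v1} is then applied with $\mathcal{A}^*=\emptyset$ because $(p,q)=(1,2)$. Finally, $\mathcal{F}$ comes from the cubic homogeneous part, which the terms of $S$ (all of degree at least $4$) do not touch.

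The genuinely different ingredient is Step 2. The paper does not use Brunella--Miari here. It reads off monodromy of both $\mathcal{X}^{[1]}$ and $\mathcal{X}^{[2]}$ from the desingularization itself: a hyperbolic saddle plus a regular chart with no singular points on the divisor, i.e.\ a monodromic hyperbolic polycycle. Your route is the natural way to pass from $\mathcal{X}^{[1]}=(\mathcal{X}^{[2]})_\Delta$ to $\mathcal{X}^{[2]}$, and non-degeneracy does follow from $\delta_1<0$, $\Delta_1<0$. But Brunella--Miari, as recalled in Section \ref{s:notation}, only reduces the question to the principal part. For $\mathcal{X}^{[1]}$ itself you must invoke a separate Newton-diagram criterion, e.g.\ the one from \cite{AGR} used for Theorem \ref{t:prop-mon-family}. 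Its hypotheses, including the saddle condition $(D-B)(D-2B)>0$ at the interior vertex, follow from $\hat{\Lambda}$. Since Step 3 is needed anyway for Theorem \ref{t:v1} and already gives monodromy, Step 2 ends up redundant.

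The obstacle you flag in Step 3 disappears after a one-line exponent count. Take a Newton point $(i,j)$ with vector coefficient $(a,b)$. In the chart $x=zw$, $y=zw^2$ after dividing by $z^2w^3$, and in the chart $y=vx^2$ after dividing by $x^3$, it becomes respectively
\[
z^{i+j-4}w^{i+2j-6}\big((2a-b)z\partial_z+(b-a)w\partial_w\big),\qquad x^{i+2j-6}v^{j-1}\big(ax\partial_x+(b-2a)v\partial_v\big),
\]
and for $j=0$ one has $a=0$. Consequences:
\begin{itemize}
\item Every point on or above $\mathbf{N}(\mathcal{X})$ gives a polynomial contribution in both charts, so both stay polynomial for $\mathcal{X}^{[2]}$.
\item The saddle's linear part comes only from the vertex $(2,2)$, giving $(2B-D)z\partial_z+(D-B)w\partial_w$.
\item $\dot v|_{x=0}=C+(b_2-2a_2)v+(D-2B)v^2$ comes only from the second edge.
\end{itemize}
This works because $(x^2/y,y/x)=\phi_2\circ\phi_1$ is a polynomial chart, unlike the $(1,1,4,2)$ case of Section \ref{ss:twodifferent}.

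A few slips to fix:
\begin{itemize}
\item $r_2=3$, not $4$; your edge equation $i+2j=6$ is the right one.
\item $x\dot y-y\dot x=y^2\big((D-B)x^2+(b_1-a_1)xy-Ay^2\big)$, so $\dot\theta$ carries $\sin^2\theta$, not an odd power. The even power is what makes $\theta=0$ compatible with monodromy.
\item Absence of invariant curves off the axes is stronger than, not equivalent to, absence of singularities there.
\item The factor $2$ in $\eta$ comes only from $2\big((D-B)\cos^2\theta-A\sin^2\theta\big)=(D-A-B)+(A-B+D)\cos2\theta$, not from $\theta\mapsto\theta+\pi$, since the integral stays over $[0,2\pi]$.
\end{itemize}
Carrying out that simplification, you will find $(b_1-a_1)\sin2\theta$ in the denominator. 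So the $\sin^2\theta$ printed in the statement appears to be a misprint.
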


\subsection{An example within the framework of Theorems \ref{t:Teo-main2-new} and \ref{t:v2}}

We consider the minimal model \eqref{e:minimal} with exponents $(\alpha, \beta, r, s) = (1,1, 4, 2) \in \mathcal{E}^\ddag$. So, the vector field $\mathcal{X}$ is given by
\begin{equation}\label{Toy-3}
\dot{x} = B x^2 y + A y^{3}, \ \ \dot{y} = D x y^2 + C x^{7},
\end{equation}
restricted to $\Lambda = \Lambda^\ddag = \{ A C < 0, A D < A B, C D > 3 B C \}$. This minimal family has weights $W(\mathbf{N}(\mathcal{X})) = \{(1, 1), (1, 3) \}$, and $\Omega_{11} = \{0\}$. To compute the coefficient $\eta$ of the linear part of the Poincar\'e map using  Theorem \ref{t:v2}, we must employ weighted polar coordinates $x=\rho^{p_2}\cos\theta=\rho\cos\theta$ and
$x=\rho^{q_2-p_2}\sin\theta=\rho^2\sin\theta$, in which new characteristic directions $\Omega_{p_2, q_2-p_2} = \{0, \pi/2\}$ of $\mathcal{B}$-type arise, both for $\mathcal{X}$, $\mathcal{X}^{[1]}$, and $\mathcal{X}^{[2]}$.

\begin{proposition}\label{p:propoexttipusb}
The first extension $\mathcal{X}^{[1]}$ of the minimal model  $\mathcal{X}$ given by \eqref{Toy-3}, with the assumptions of Theorem \ref{t:Teo-main2-new} is:
\begin{equation}\label{Toy-3-ext}
\dot{x} = B x^2 y + A y^{3} +a_1 x y^2 + a_2 x^5, \ \ \dot{y} = D x y^2 + C x^{7} + b_1 y^3 + b_2 x^4 y.
\end{equation}
The origin of \eqref{Toy-3-ext} becomes  monodromic if we restrict it to the parameter space $\hat{\Lambda} = \Lambda \cap \{ \Delta_1 < 0, \Delta_2 < 0 \}$ with $\Lambda = \{ (A, B,C,D) \in \mathbb{R}^4 : AC<0, (3B-D)(D-2B)<0, (B-D)(D-2B)<0 \}$, $\Delta_1 = (b_2-3 a_2)^2 + 4 C (3 B - D)$ and $\Delta_2 = (a_1 - b_1)^2 + 4 A (D-B)$. Moreover if we fix $D-2B>0$, to ensure that the orbits turn around in counterclockwise direction, the Poincar\'e map is $\Pi(\rho) = \eta \rho + o(\rho)$ with
\begin{align}\label{eta-Toy-3-ext}
\eta=&\exp\left\{ 2\,\frac{3B-D}{2B-D}\mathrm{PV}\int_{-\infty}^{\infty}\frac{Bv+a_2}{(D-3B)v^2+(b_2-3a_2)v+C}dv\right.\\
&\left.-2\,\frac{B-D}{2B-D}  \mathrm{PV}\int_{-\infty}^{\infty}\frac{Du+b_1}{(B-D)u^2+(a_1-b_1)u+A}du\right\}.\nonumber
\end{align}
The second extension $\mathcal{X}^{[2]}$ is
\begin{equation}\label{Toy-3-ext-HOT}
\begin{array}{l}
\dot{x} = B x^2 y + A y^{3} +a_1 x y^2 + a_2 x^5+ \sum_{(i, j) \in S} a_{i,j-1} x^i y^{j-1},  \\
\dot{y} = D x y^2 + C x^{7} + b_1 y^3 + b_2 x^4 y + \sum_{(i, j) \in S} b_{i-1,j} x^{i-1} y^{j},
\end{array}
\end{equation}
where $S = \{ (i,j) \in \mathbb{N}^2 : i + j > 4, i + 3 j  > 8\}$. The origin of \eqref{Toy-3-ext-HOT} restricted to $\hat{\Lambda}$ is a monodromic singularity with Poincar\'e map $\Pi(x) =\eta x + o(x)$ where $\eta$ is given in \eqref{eta-Toy-3-ext}.
\end{proposition}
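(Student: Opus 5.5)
The proof has four steps: identify the first extension, prove monodromy, compute $\eta$ via Theorem \ref{t:v2}, and show the second extension changes nothing at leading order.

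\textbf{Step 1: the first extension.} For $(\alpha,\beta,r,s)=(1,1,4,2)$ the weights are $(p_1,q_1)=(1,1)$ and $(p_2,q_2)=(1,3)$, and \eqref{The-rk} gives $r_1=1$ and $r_2=3$. On the first edge, the $(1,1)$-homogeneous degree-$3$ part contributes $a_1xy^2\partial_x+b_1y^3\partial_y$. On the second edge we need the points with $i+3(j-1)=4$ and $2<i<8$, which gives only $(i,j)=(5,0)$, that is $a_2x^5\partial_x$, and its companion $b_2x^4y\partial_y$. This yields \eqref{Toy-3-ext}.

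\textbf{Step 2: monodromy.} By the Brunella--Miari criterion it suffices to check the two quasi-homogeneous components.

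For the $(1,1)$-component, compute $x\dot y-y\dot x$ restricted to it. We need this to have no real zeros off the axes and to be non-degenerate. In the variable $u=x/y$ this is the quadratic $(B-D)u^2+(a_1-b_1)u+A$ (up to sign), and it has no real root exactly when $\Delta_2<0$.

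For the $(1,3)$-component, compute $3y\dot x-x\dot y$. In the variable $v=y/x^3$ this is $(D-3B)v^2+(b_2-3a_2)v+C$, and it has no real root exactly when $\Delta_1<0$.

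The sign conditions in $\Lambda$ then ensure that the rotation agrees on both edges, as in Theorem \ref{t:prop-mon-family}.

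\textbf{Step 3: computing $\eta$.} I will verify that the desingularization of Theorem \ref{t:Teo-main2-new} persists for $\mathcal{X}^{[1]}$, and then apply Theorem \ref{t:v2}.

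The added monomials have the same weighted degrees as the vertices. So after the blow-ups $v=y/x^3$ (chart \eqref{e:cordB}) and $u=x/y$ (chart \eqref{e:cordC}), each extension term only perturbs the quadratic denominator. Concretely:
\begin{itemize}
\item In chart \eqref{e:cordB}, $V(0,v)$ is the quadratic in $v$ above and $\partial_x(X/V)|_{x=0}=(Bv+a_2)/(\dots)$.
\item In chart \eqref{e:cordC}, $U(u,0)$ is the quadratic in $u$ and $\partial_y(Y/U)|_{y=0}=(Du+b_1)/(\dots)$.
\end{itemize}

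The saddle eigenvalues at the two corners involve only vertex coefficients, because the corner charts see the extreme monomials. This gives $\lambda_0=(2B-D)/(3B-D)$ and $\lambda_{\pi/2}=(2B-D)/(B-D)$, up to the sign bookkeeping that I will check explicitly. Since $p_2=1$ and $q_2=3$ are odd, $\mathcal{O}=1$.

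Since $q_2-p_2=2$ is even, the term $\mathrm{PV}\int\mathcal{F}=0$ by the remark after Theorem \ref{t:v2}. Substituting everything into \eqref{e:tv2} gives \eqref{eta-Toy-3-ext}.

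\textbf{Step 4: the second extension.} For \eqref{Toy-3-ext-HOT}, every monomial indexed by $S$ has strictly larger weighted degree on both edges. After each blow-up and the same time rescaling, these terms therefore carry a positive power of the blow-up variable $x$, $y$, $z$ or $w$. Consequently:
\begin{itemize}
\item $V(0,v)$, $U(u,0)$, the first variations $\partial_x(X/V)|_{x=0}$ and $\partial_y(Y/U)|_{y=0}$, and the saddle linear parts are all unchanged;
\item $\mathcal{F}$ is unchanged.
\end{itemize}
Hence monodromy and $\eta$ persist.

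\textbf{Main obstacle.} The key point is the leading-order check in Steps 3 and 4. One must confirm, monomial by monomial, that after each rescaling no term from $S$ lands in the order-zero part of the first variational term. I expect Step 4 to be the delicate part, especially for monomials near the corner $(i,j)$ where the two edges meet. There I would bound the exponents of $x$ and $w$ directly using the two strict inequalities defining $S$.
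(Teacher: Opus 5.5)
Your argument is correct, and for $\eta$ it is the paper's argument. It uses the same $\mathcal{B}$-type charts $(x^3/y,\,y/x^2)$, $(x,\,y/x^3)$, $(y/x,\,x^2/y)$, $(x/y,\,y)$, the same ratios $\lambda_0=(2B-D)/(3B-D)$ and $\lambda_{\pi/2}=(D-2B)/(D-B)$, the same integrands, and Theorem~\ref{t:v2} with $p_2=1$, $\mathcal{O}=1$. The paper computes $\mathcal{F}=(B\cot\theta+D\tan\theta)/(D-2B)$ explicitly where you cite the parity remark; either is fine.

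Two points differ.

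\emph{Monodromy.} The paper reads it off the desingularization: the saddles are hyperbolic on $\Lambda$, and $\dot v|_{x=0}$, $\dot u|_{y=0}$ have no real roots. You use instead the Newton-polygon edge/vertex criterion behind Theorem~\ref{t:prop-mon-family}. These are literally the same conditions, since $x\dot y-3y\dot x$ on the $(1,3)$-component is $x^8V(0,v)$ and $x\dot y-y\dot x$ on the $(1,1)$-component is $-y^4U(u,0)$. Be aware that Brunella--Miari by itself says nothing for $\mathcal{X}^{[1]}=(\mathcal{X}^{[1]})_\Delta$. The actual content is the criterion of \cite{AGR}; Brunella--Miari is what then transfers monodromy to $\mathcal{X}^{[2]}$ for free.

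\emph{Second extension.} Here the paper only says ``a computation shows'', and your weighted-degree argument supplies the missing proof. The delicate check you flagged goes through. A term indexed by $(i,j)\in S$ contributes:
\begin{itemize}
\item in chart \eqref{e:cordB}, $x^{i+3j-8}v^{j}$ to $\dot v$ and $x^{i+3j-7}v^{j-1}$ to $\dot x$;
\item in chart \eqref{e:cordC}, $y^{i+j-4}$ to $\dot u$ and $y^{i+j-3}$ to $\dot y$;
\item to $\dot z/z$ and $\dot w/w$ in the two saddle charts, $z^{i+2j-6}w^{i+3j-8}$ and $z^{i+2j-6}w^{i+j-4}$ respectively.
\end{itemize}
The corner exponent satisfies $i+2j\ge 7$, by averaging $i+j\ge 5$ and $i+3j\ge 9$. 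The same bound shows that $\mathcal{F}$ is untouched. The structural reason this works is that in every $\mathcal{B}$-chart, $x$ and $y$ are monomials in the chart variables with nonnegative integer exponents (e.g.\ $x=zw$, $y=z^2w^3$). This is what breaks down for the $\mathcal{A}$-chart $(x^3/y,\,y/x)$ in Section~\ref{ss:twodifferent}.

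One slip in Step~1: \eqref{The-rk} gives $r_1=2$ and $r_2=4$, not $1$ and $3$. So the second-edge condition is $i+3(j-1)=5$, with interior point $(i,j)=(5,1)$, not $(5,0)$. The monomials you end up listing are nonetheless the correct ones.
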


\section{Some difficulties after the first extension}\label{s:literatura}

We consider the simplest family of Example B given in \cite{Ma}:
\begin{equation}\label{ejemplo-manyosa}
\dot{x} = x y^2 - y^3 + a x^5, \ \ \dot{y} = 2 x^7 - x^4 y + 4 x y^2 + y^3,
\end{equation}
that has a monodromic singularity at the origin when $\Delta(a) := 32 - (1+3 a)^2 >0$ and $W(\mathbf{N}(\mathcal{X})) = \{(1, 1), (1, 3) \}$. In \cite{Ma} it is proved that, when $\Delta(a)>0$, the coefficient of the linear part of the Poincar\'e map of the origin of \eqref{ejemplo-manyosa} is
\begin{equation}\label{eta1-manyosa}
\eta = \exp\left( \pi + \frac{4 \pi a}{\sqrt{\Delta(a)}} \right).
\end{equation}
In particular $\eta \neq 1$ if $a \neq -31/25$, the origin is an stable focus when $-(4 \sqrt{2}+1)/3 < a < -31/25$ while it is an unstable focus when $-31/25 < a < (4 \sqrt{2}-1)/3$. The nature of the singularity when $a = -31/25$ is not known because $\eta = 1$.
\medskip

In order to generalize the family \eqref{ejemplo-manyosa} we take $(A, B, C, D)=(-1, 0, 2,4)$ and  $(\alpha, \beta, r, s) = (1, 1, 4, 2) \in \mathcal{E}^\dag$ into the minimal model \eqref{e:minimal}. This choice of parameters and exponents lie in $\Lambda^\dag \cap  \mathcal{E}^\dag$ and \eqref{e:minimal} becomes the vector field $\mathcal{\mathcal{X}}$ given by
\begin{equation}\label{toy-manyosa-Famili-I}
\dot{x} = - y^3, \ \ \dot{y} = 2 x^7 + 4 x y^2,
\end{equation}
a part of \eqref{ejemplo-manyosa} with the same Newton diagram and the origin monodromic. The next proposition is proved in Section \ref{S-Proofs3}.

\begin{proposition}\label{Prop-manyosa-Famili-I-ext}
The first extension $\mathcal{X}^{[1]}$ of the minimal model  $\mathcal{X}$ given by  \eqref{toy-manyosa-Famili-I} is:
\begin{equation}\label{manyosa-ext}
\dot{x} = - y^3 + c_1 x y^2 + a x^5, \ \ \dot{y} = 2 x^7 + 4 x y^2 + d_1 y^3 + d_2 x^4 y.
\end{equation}
The origin of \eqref{manyosa-ext} is  monodromic  when we restrict it to the parameter space $\hat{\Lambda} = \{ \delta_1 < 0, \Delta_1 > 0 \}$ with $\delta_1 = -16 + (c_1 - d_1)^2$, $\Delta_1 = 32 - (d_2 -3 a)^2$. Moreover the Poincar\'e map is $\Pi(x) = \eta x + o(x)$ with

\begin{equation}\label{eta-Toy1-XD_ManyosaFamilyI}
\eta = \exp\left\{\frac{2(c_1+d_1) \pi}{\sqrt{-\delta_1}}+\frac{4a\pi}{\sqrt{\Delta_1(a,d_2)}}\right\}.
\end{equation}
The second extension $\mathcal{X}^{[2]}$
\begin{equation}\label{ManyosaFamilyI-HOT}
\begin{array}{l}
\dot{x} = - y^3 + c_1 x y^2 + a x^5 + \sum_{(i, j) \in S} a_{i,j-1} x^i y^{j-1}, \\
\dot{y} = 2 x^7 + 4 x y^2+ d_1 y^3 + d_2 x^4 y + \sum_{(i, j) \in S} b_{i-1,j} x^{i-1} y^{j},
\end{array}
\end{equation}
where $S = \{ (i,j) \in \mathbb{N}^2 : i + j  > 4, i + 3 j > 8\}$. Then the origin of \eqref{ManyosaFamilyI-HOT} restricted to $\hat{\Lambda}$ is a monodromic singularity with the same linear coefficient \eqref{eta-Toy1-XD_ManyosaFamilyI} of its Poincar\'e map.
\end{proposition}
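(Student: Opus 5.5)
We follow the scheme already used for Proposition \ref{p:propoexttipusb}, since \eqref{toy-manyosa-Famili-I} is the minimal model with $(\alpha,\beta,r,s)=(1,1,4,2)$. Here $W(\mathbf{N}(\mathcal{X}))=\{(1,1),(1,3)\}$, $r_1=2$, $r_2=4$, and we treat it as \eqref{Toy-3} with $(A,B,C,D)=(-1,0,2,4)$. The plan has four steps.

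\textbf{Step 1: the first extension.} We list the lattice points lying on the two edges that are not vertices, using the conditions stated after \eqref{The-rk}. On the $(1,1)$-edge, degree $r_1=2$, the new monomials are $x y^2\partial_x$ and $y^3\partial_y$. On the $(1,3)$-edge, degree $r_2=4$, they are $x^5\partial_x$ and $x^4y\partial_y$. This gives exactly \eqref{manyosa-ext}, which is \eqref{Toy-3-ext} with $(A,B,C,D,a_1,a_2,b_1,b_2)=(-1,0,2,4,c_1,a,d_1,d_2)$.

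\textbf{Step 2: monodromy.} By the Brunella--Miari criterion (\cite{BM}, \cite[Theorem 6]{GaGi3}), it suffices to check that each quasi-homogeneous component has no invariant lines off the axes and the correct rotation.
\begin{itemize}
\item For the $(1,1)$ component, substituting $y=ux$ reduces the condition to the absence of real roots of $(B-D)u^2+(a_1-b_1)u+A=-4u^2+(c_1-d_1)u-1$. This is equivalent to $\delta_1=(c_1-d_1)^2-16<0$.
\item For the $(1,3)$ component, substituting $y=vx^3$ reduces it to the absence of real roots of $(D-3B)v^2+(b_2-3a_2)v+C=4v^2+(d_2-3a)v+2$. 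This is equivalent to $(d_2-3a)^2<32$, that is, $\Delta_1>0$.
\end{itemize}
The parameter values lie in $\Lambda^\ddag$ and $\Lambda$, so on $\hat\Lambda$ the sign conditions of Proposition \ref{p:propoexttipusb} hold. Moreover $D-2B=4>0$, so the orbits turn counterclockwise.

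\textbf{Step 3: the linear coefficient.} We invoke formula \eqref{eta-Toy-3-ext} of Proposition \ref{p:propoexttipusb}, which comes from Theorem \ref{t:v2}. Since $q_2-p_2=2$ is even, the $\mathcal F$ integral vanishes. The prefactors are $2(3B-D)/(2B-D)=2$ and $-2(B-D)/(2B-D)=-2$. Each remaining integral is $\int\frac{Mv+N}{av^2+bv+c}dv$ with negative discriminant, and its principal value equals $\frac{2\pi}{\sqrt{4ac-b^2}}\bigl(N-\frac{Mb}{2a}\bigr)$.
\begin{itemize}
\item First integral: $M=0$, $N=a$ and $4ac-b^2=\Delta_1$, giving $\frac{2\pi a}{\sqrt{\Delta_1}}$. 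After the factor $2$ this contributes $\frac{4\pi a}{\sqrt{\Delta_1}}$.
\item Second integral: the data are $Du+b_1=4u+d_1$ over $-4u^2+(c_1-d_1)u-1$. Multiplying numerator and denominator by $-1$ and using $4ac-b^2=-\delta_1$, the principal value becomes $\frac{2\pi}{\sqrt{-\delta_1}}\bigl(-d_1-\frac{(-4)(c_1-d_1)}{8}\bigr)=\frac{2\pi}{\sqrt{-\delta_1}}\cdot\frac{c_1-d_1-2d_1}{2}$.
\end{itemize}
Combining with the factor $-2$ should reproduce $\frac{2(c_1+d_1)\pi}{\sqrt{-\delta_1}}$. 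This sign and constant bookkeeping is the main place an error can creep in, so we will redo it against the direct computation in the charts of Theorem \ref{t:v2} rather than rely on the formula blindly.

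\textbf{Step 4: the second extension, the main obstacle.} We must show that adding terms indexed by $S$ does not alter the blow-up scheme of Theorem \ref{t:Teo-main2-new}, the saddle eigenvalues, the regular charts on $\{x=0\}$ and $\{y=0\}$, or the weighted-polar leading term $\mathcal F$. In each chart the added monomials carry strictly positive extra powers of the exceptional divisor variable, since the inequalities in $S$ are strict. Hence they vanish on the divisor and leave unchanged both the linear parts at the saddles and the functions $X(0,v)$, $V(0,v)$, $\partial_x X(0,v)$ (respectively $\partial_y Y(u,0)$) entering \eqref{e:tv2}. The delicate check is that the $\partial_x$ derivative at $x=0$ receives no contribution. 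This requires verifying that every term in $S$ gains at least two powers of $x$ in the chart $v=y/x^3$ (and of $y$ in the $u$ chart), which we expect to follow from $i+3j>8$ together with integrality and the rescaling exponents. Monodromy is preserved because $\mathcal{X}^{[2]}_\Delta=\mathcal{X}^{[1]}$.
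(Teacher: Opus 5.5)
Your argument is correct and proves the statement, but it takes a different route from the paper's proof.

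\textbf{The paper's route.} For $\mathcal{X}^{[1]}$ the paper uses the $\mathcal{A}$-type scheme and Theorem \ref{t:v1}:
\begin{itemize}
\item in classical polar coordinates, $\delta_1<0$ leaves only the characteristic direction $\theta_*=0$;
\item the chart $(x^3/y,\,y/x)$ gives a saddle with $\lambda=1$;
\item the chart $y/x^3$, with factor $2/(\lambda p)=2$ since the weights $(1,3)$ are both odd, gives $4\pi a/\sqrt{\Delta_1}$;
\item the $(c_1+d_1)$-term is $\mathrm{PV}\int_0^{2\pi}\mathcal{Q}/\mathcal{P}\,d\theta$.
\end{itemize}
For $\mathcal{X}^{[2]}$ that scheme no longer yields polynomial charts: there $x^2=z_2w$, so points with $i+j$ odd give half-integer powers. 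The paper then switches to the $\mathcal{B}$-type scheme and asserts, without details, that $\eta$ is unchanged.

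\textbf{Your route.} You observe that \eqref{manyosa-ext} and \eqref{ManyosaFamilyI-HOT} are exactly \eqref{Toy-3-ext} and \eqref{Toy-3-ext-HOT}, with the same $S$, at $(A,B,C,D)=(-1,0,2,4)$ and $(a_1,a_2,b_1,b_2)=(c_1,a,d_1,d_2)$. The hypotheses of Proposition \ref{p:propoexttipusb} then reduce to $\delta_1<0$, $\Delta_1>0$. So the whole statement is a specialization. The weighted-polar integral vanishes, and the $(c_1+d_1)$-term now comes from the chart $u=x/y$. This treats both extensions uniformly.

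Your Step 4 also supplies the check the paper leaves implicit. The $\mathcal{B}$ charts have monomial inverses, so after the rescalings a point $(i,j)\in S$ contributes:
\begin{itemize}
\item $z^{i+2j-6}w^{i+3j-8}$ and $z^{i+2j-6}w^{i+j-4}$ in the two saddle charts;
\item $x^{i+3j-7}$ to $\dot x$ and $x^{i+3j-8}$ to $\dot v$ in the chart $v=y/x^3$;
\item $y^{i+j-4}$ to $\dot u$ and $y^{i+j-3}$ to $\dot y$ in the chart $u=x/y$.
\end{itemize}
On $S$ we have $i+j\ge 5$ and $i+3j\ge 9$, hence $i+2j\ge 7$, so every exponent is $\ge 1$. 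Only $\dot x$ (resp.\ $\dot y$) needs $\ge 2$, since $X(0,v)=0$ gives $\partial_x(X/V)|_{x=0}=\partial_xX(0,v)/V(0,v)$. Note that the $u$-chart uses $i+j>4$, not $i+3j>8$. The $(1,2)$-weighted degree $i+2j-3\ge 4>3$ shows $\mathcal{F}$ is unchanged. What the paper's route buys in exchange is an independent cross-check that two different desingularizations of $\mathcal{X}^{[1]}$ give the same $\eta$.

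\textbf{A sign slip in Step 3, where you suspected one.} After multiplying by $-1$ the denominator is $4u^2+(d_1-c_1)u+1$, so its linear coefficient is $d_1-c_1$, not $c_1-d_1$. Then
\[
N-\frac{Mb}{2a}=-d_1-\frac{(-4)(d_1-c_1)}{8}=-\frac{c_1+d_1}{2}.
\]
The principal value is $-\pi(c_1+d_1)/\sqrt{-\delta_1}$, and the prefactor $-2$ gives exactly $2\pi(c_1+d_1)/\sqrt{-\delta_1}$. Your expression $(c_1-3d_1)/2$ would instead have produced $2\pi(3d_1-c_1)/\sqrt{-\delta_1}$.

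\textbf{A minor missing check.} Appealing to Brunella--Miari for $\mathcal{X}^{[2]}$ requires $\mathcal{X}^{[1]}$ to be nondegenerate, which holds on $\hat\Lambda$:
\begin{itemize}
\item the $(1,1)$ component $y^2[(c_1x-y)\partial_x+(4x+d_1y)\partial_y]$ vanishes off the axes only if $c_1d_1=-4$, which forces $(c_1-d_1)^2=(c_1+d_1)^2+16\ge 16$;
\item the $(1,3)$ component vanishes off the axes only if $a=0$ and $d_2^2\ge 32$.
\end{itemize}
In any case, your Step 4 already shows the hyperbolic polycycle persists, which gives monodromy directly.
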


\begin{remark}\label{Remar-Monster2}
{\rm Along the proof of Proposition \ref{Prop-manyosa-Famili-I-ext}, we observed that when extending the field $\mathcal{X}$ given in \eqref{toy-manyosa-Famili-I} to $\mathcal{X}^{[1]}$ given in \eqref{manyosa-ext}, new characteristic directions may appear while the origin of both $\mathcal{X}$ and $\mathcal{X}^{[1]}$ remains monodromic. More specifically there is only one characteristic direction $\theta_1^* = 0$ when $\delta_1 < 0$ but two characteristic directions for the origin of $\mathcal{X}^{[1]}$ appear when $\delta_1=0$, namely $(\theta_1^*, \theta_2^*) = (0, \pm \arctan(2))$ according to whether $d_1 = \mp 4 + c_1$. We emphasize that this last parameter restriction is compatible with the monodromy of the origin. Since $p_1 = q_1$, the geometry of the desingularizing blow-ups of $\mathcal{X}$ and $\mathcal{X}^{[1]}$ coincide only when the extension is made preserving the number of characteristic directions, which is what states Proposition \ref{Prop-manyosa-Famili-I-ext} when $\delta_1 < 0$.}
\end{remark}

\subsection{The action of linear changes of variables}

Let $\mathcal{Z}$ be an analytic vector field with a monodromic singular point at the origin with Newton diagram $\mathbf{N}(\mathcal{Z})$ and a set of characteristic directions $\Omega \subset [0,2\pi)$. We consider the effect of a diffeomorphism $\Phi$ around the origin on the transformed vector field $\phi_* \mathcal{Z}$ with Newton diagram $\mathbf{N}(\phi_* \mathcal{Z})$ and characteristic directions $\Omega^* \subset [0,2\pi)$. Clearly, $\phi$ preserves certain invariants, such as the monodromic character at the origin and the equality $\#\Omega = \#\Omega^*$. However, in general, we have $\Omega \neq \Omega^*$ and $\mathbf{N}(\mathcal{Z}) \neq \mathbf{N}(\phi_* \mathcal{Z})$ because  $W(\mathbf{N}(\mathcal{Z})) \neq W(\mathbf{N}(\phi_* \mathcal{Z}))$ and even $\# W(\mathbf{N}(\mathcal{Z})) \neq \# W(\mathbf{N}(\phi_* \mathcal{Z}))$ may occur.
\medskip

We are going to see how linear changes of variables modify $\Omega$, $\mathbf{N}(\mathcal{X}^{[1]})$, and the monodromy of the associated minimal model $\mathcal{X}$. According to Remark \ref{Remar-Monster2} and Proposition \ref{Prop-manyosa-Famili-I-ext} we extend the minimal model $\mathcal{X}$ given in \eqref{toy-manyosa-Famili-I} to $\mathcal{X}^{[1]}$ given by \eqref{manyosa-ext} but imposing the condition $\delta_1 = 0$ so that one more characteristic direction appears in the first extension. More specifically we have $(\theta_1^*, \theta_2^*) = (0, \pm \arctan(2))$ when $d_1 = \mp 4 + c_1$.

Some computations in the specific case $d_1 = - 4 + c_1$ reveal that the origin of $\mathcal{X}^{[1]}$ is monodromic if and only if the parameters lie in $\hat{\Lambda}_1 =  \{ c_1=2, \Delta_1 > 0\}$ with $\Delta_1 = 32 - (d_2 -3 a)^2$ as in Proposition \ref{Prop-manyosa-Famili-I-ext}. Now we do a linear change of variables $(x,y) \mapsto L(x,y) = (-2 x + y, y)$ so that the transformed vector field $L_* \mathcal{X}^{[1]}$ has a singularity at the origin with new characteristic directions $(\theta_1^*, \hat{\theta}_2^*) = (0, \pi/2)$. Moreover, $W(\mathbf{N}(\mathcal{X}^{[1]})) = \{(1, 1), (1, 3) \}$ is transformed into $W(\mathbf{N}(L_*\mathcal{X}^{[1]})) = \{(2, 1), (1, 3) \}$.

Since $\# W(\mathbf{N}(L_*\mathcal{X}^{[1]})) = 2$, we can find the minimal model \eqref{e:minimal} associated to $L_* \mathcal{X}^{[1]}$ which turns out to be
\begin{equation}\label{toy-transformed}
\dot{x} = \frac{d_2-a}{16} y^5, \ \ \dot{y} = -\frac{1}{64} x^7 -2 x y^2.
\end{equation}
Using Proposition \ref{t:prop-mon-family} it follows that for this transformed minimal model \eqref{toy-transformed} the origin is monodromic if and only if the new condition $a < d_2$ holds. We emphasize that this parameter condition  must be added to guarantee that the desingularization process of $L_* \mathcal{X}^{[1]}$ used in Theorem \ref{t:Teo-main1} success, that is we need to work on $\hat{\Lambda}_1 \cap \{a < d_2\}$. We have checked that the desingularization of the origin of $L_*\mathcal{X}^{[1]})$ without the condition $a < d_2$ is quite involved.

\subsection{Two different desingularizations of the first extension}\label{ss:twodifferent}

We consider the minimal model $\mathcal{X}$ given in \eqref{toy-manyosa-Famili-I}. First we do the desingularization of the characteristic direction $\theta_*=0$ as in  Theorem \ref{t:Teo-main1}, that is, thinking it as a type $\mathcal{A}$ characteristic directions, by doing $(x, y) \mapsto (z_2, w)$ and next $(x,y) \mapsto (x, w_2)$ with
\begin{equation}\label{first-desingularization-toy-M}
z_2= x^{3}/y, \ \  w = y/x, \ \ w_2 = y/x^{3}.
\end{equation}
These changes desingularize the origin of both $\mathcal{X}$ and its first extension $\mathcal{X}^{[1]}$ given in \eqref{manyosa-ext} but it doesn't do it with the origin of $\mathcal{X}^{[2]}$ when we add the higher order terms defined in \eqref{ManyosaFamilyI-HOT}. The reason is that, although the first transformation in \eqref{first-desingularization-toy-M} gives us a regular part, the second one trying to get a hyperbolic saddle singularity at the origin brings $\mathcal{X}^{[2]}$ to a non-polynomial vector field. This phenomenon was shown in the proof of Proposition \ref{Prop-manyosa-Famili-I-ext}.

This problem is fixed when we perform different blow-ups. More specifically, the origin of \eqref{toy-manyosa-Famili-I} desingularizes using the $\mathcal{B}$-type $2$-tuples scheme, that is, we do first $(x,y) \mapsto (z_2, z)$ and $(x,y) \mapsto (x, w_2)$ with
\begin{equation}\label{second-desingularization-toy-M}
z_2 =x^{3}/y, \ \ z=y/x^{2}, \ \ w_2 = y/x^{3},
\end{equation}
for the characteristic direction $\theta_*=0$ and finally we perform $(x, y) \mapsto (w_1, z_1)$ and $(x,y) \mapsto (u, y)$ with
\begin{equation}\label{FIRST-desingularization-toy-M}
z_1 =y/x, \ \ w_1=x^2/y, \ \ u = x/y,
\end{equation}
for the characteristic direction $\theta_*=\pi/2$.

In summary we notice that the first extension \eqref{manyosa-ext} of the minimal model \eqref{toy-manyosa-Famili-I} can be desingularized in two essentially different ways. At this point we recall that the desingularization process \cite{Dum} obtained composing the elemental blow-ups $\phi_1(x,y) = (x, y/x)$ and $\phi_2(x,y) = (x/y, y)$ is unique and it corresponds to the scheme \eqref{second-desingularization-toy-M} since the first change in \eqref{second-desingularization-toy-M} is written as the composition $(x, w_2) = \phi_1 \circ \phi_1 \circ \phi_1(x,y)$ while the second change as  $(z_2, z) = \phi_2 \circ \phi_1 \circ \phi_1(x,y)$. Of course the blow ups in \eqref{FIRST-desingularization-toy-M} are also composition of elemental ones since $(w_1, z_1) = \phi_2 \circ \phi_1(x,y)$ and $(u, y) = \phi_2(x,y)$.

Notice that we fail looking for a similar composition scheme associated to the second transformation in \eqref{first-desingularization-toy-M} because the change $(x, y) \mapsto (w, z_2) \neq \phi_i \circ \phi_j \circ \phi_k(x,y)$ for any combination of $i, j, k \in \{1,2\}$. We also point out that the origin of the second extension $\mathcal{X}^{[2]}$ can be desingularized only using the blow-up \eqref{second-desingularization-toy-M} while the blow-up \eqref{first-desingularization-toy-M} fails.

This example shows that, although clearly any desingularization of $\mathcal{X}^{[2]}$ also desingularizes $\mathcal{X}^{[1]}$, the reciprocal is not true so that $\mathcal{X}^{[1]}$ and  $\mathcal{X}^{[2]}$ can have two different desingularizations. It is well-known, see \cite{Dum} for details, that any analytic monodromic singularity of $\mathcal{X}$ admits a finite desingularization but, as far as we know, there is no method to determine the value $k \in \mathbb{N}$ such that the finite jet $J^{k} \mathcal{X}$ of $\mathcal{X}$ at the origin has the same desingularization that $\mathcal{X}$ itself. In all the examples of this work we obtain that $k$ is given by the minimum $k \in \mathbb{N}$ such that $\mathcal{X}^{[1]} \subset J^{k} \mathcal{X}$.

\section{Some monodromic classifications without $\# W(\mathbf{N}(\mathcal{Z}))$ fixed} \label{S-ClasesMon}

\subsection{The monodromic class $S_{k \omega}$}\label{s:sk}

The monodromic class $S_{k \omega}$ was defined in \cite{Ga-Ma-Ma}. A real analytic planar vector field defined in a neighborhood of the origin  whose first non-zero terms have odd degree $k$, polar differential system
$\dot{\rho}=\sum_{i\geq 0}F_{k+i}(\theta)\rho^{i+1},$ $
\dot{\theta}=\sum_{i\geq 0}G_{k+i}(\theta)\rho^i;$
with $G_k \not\equiv 0$ and $G_k(\theta) \geq 0$ (without loss of generality) and for any characteristic direction $\theta_*$ the following conditions hold:
\begin{enumerate}
  \item[(a)] $F_k(\theta_*) = 0$;
  \item[(b)] $G_{k+1}(\theta_*) = 0$;
  \item[(c)] $[(G''_{k}-2 F'_{r}) G''_{k}](\theta_*) > 0$;
  \item[(d)] $[(G'_{k+1}- F_{k+1})^2 -2 (G''_{k} - 2 F'_{k}) G_{k+2}](\theta_*) < 0$.
\end{enumerate}
Next we see that the class $S_{3 \omega}$ can be embedded the framework of Theorems \ref{t:Teo-main1} and \ref{t:Teo-main2}. Indeed,
in Proposition 13 of \cite{GaGi3} it is proved that the monodromic class $S_{3 \omega}$ has either $W(\mathbf{N}(\mathcal{Z})) = \{ (2,1), (1,2) \}$ and $\eta = 1$ or $W(\mathbf{N}(\mathcal{Z})) = \{ (1,1), (1,2) \}$. It is a tedious but straightforward to check that each one of the above cases fall into the desingularization processes established in Theorems \ref{t:Teo-main1} and \ref{t:Teo-main2}, respectively (in other words, the characteristic directions are $\mathcal{A}$-type). Indeed, following \cite{GaGi3} it follows that there exists a linear change of coordinates such that the associated polar differential system has the angular component $\dot{\theta} = G_3(\theta) + O(\rho)$ with (excluding the trivial case $G_3(\theta) > 0$ in $[0,2\pi)$) only two different cases: (I) either $G_{3}(\theta) = \alpha \sin^2\theta \cos^2\theta$ with $\alpha > 0$ and $W(\mathbf{N}(\mathcal{Z})) = \{ (2,1), (1,2) \}$ or (II) $G_{3}(\theta) = \sin^2\theta S(\theta)$ where $S(\theta) > 0$ in $[0,2\pi)$ and $W(\mathbf{N}(\mathcal{Z})) = \{ (1,1), (1,2) \}$.

In the first case (I) the family $S_{3 \omega}$ has two characteristic directions $\theta_1^*=0$ and $\theta_2^*= \pi/2$ and the associated minimal model \eqref{e:minimal} has exponents $(\alpha, \beta, r, s) = (1, 1, 3, 3)$ so that the origin desingularizes using the blow-ups of Theorem \ref{t:Teo-main1}.

In the second case (II) the family $S_{3 \omega}$ has only the characteristic direction $\theta_1^*=0$, the associated minimal model \eqref{e:minimal} has exponents $(\alpha, \beta, r, s) = (1, 1, 3, 2)$, and the origin desingularizes with the blow-ups described in Theorem \ref{t:Teo-main2}. In summary, the $S_{3,\omega}$ class falls within the scope of this work and, therefore, can be studied with the tools developed here.

Now, we show that vector fields $\mathcal{Z} \in S_{k \omega}$ with $k > 3$ can have $\# W(\mathbf{N}(\mathcal{Z})) > 2$, so that it lies beyond the scope of this work. Indeed, as an example,  consider the differential system
$$
\dot{x} = -2 x^4 y + x^3 y^2 - 3 x^2 y^3 - y^7, \ \ \dot{y} = x^7 - x^3 y^2 + 2 x^2 y^3 - x y^4.
$$ It has a monodromic singular point at the origin, it belongs to the class $S_{5 \omega}$, and moreover $\# W(\mathbf{N}(\mathcal{Z})) = 3$.

\subsection{The $\mathcal{G}$-monodromic class}\label{s:G}

The work \cite{Ga-Li-Ma-Ma} defines the $\mathcal{G}$-monodromic class for vector fields $\mathcal{Z} = P(x,y) \partial_x + Q(x,y) \partial_y =  \mathcal{Z}_{m-1} + \mathcal{Z}_{M-1}$ constructed from the sum of two homegeneous vector fields of degrees $m$ and $M$ with $1 \leq m < M$.
Let $\mathcal{Z}_{m-1} = P_m(x,y) \partial_x + Q_m(x,y) \partial_y$ and $\theta_j$ with $j = 1, \ldots, k$ the characteristic directions associated to the origin of $\mathcal{Z}$. We set $a_j = \cos\theta_j$, $b_j = \sin\theta_j$ and taking $z_j(z) = (a_j - b_j z, b_j+ a_j z)$ we define the polynomials
$$
P_m^j(z) = a_j P_m(z_j(z)) + b_j Q_m(z_j(z)), \ \ Q_m^j(z) = -b_j P_m(z_j(z)) + a_j Q_m(z_j(z)).
$$
from where we compute the derivatives $\alpha_j = (P_m^j)'(0)$ and $\beta_j = (Q_m^j)''(0)/2$.

Then $\mathcal{Z} \in \mathcal{G}$ if either the origin has no characteristic directions or $\alpha_j^2 + \beta_j^2 \neq 0$ for $j = 1, \ldots, k$. The vector field $\mathcal{Z}$ satisfies condition (a) if $x Q(x,y)-y P(x,y) \neq 0$ for all $(x,y)$ in a punctured neighborhood of the origin. Additionally, $\mathcal{Z}$ is said to satisfy condition (b) if either the origin has no characteristic directions or $P_m(\cos\varphi_j^*, \sin\varphi_j^*) = Q_m(\cos\varphi_j^*, \sin\varphi_j^*) = 0$ for all $j = 1, \ldots, k$. In \cite{Ga-Li-Ma-Ma} it is proved the following statements:
\begin{itemize}
  \item[(i)] If the origin is monodromic for $\mathcal{Z}$ then $m$ is odd, and conditions (a) and (b) hold. Moreover, if there are characteristic directions then $M$ is also odd.

  \item[(ii)] If $\mathcal{Z} \in \mathcal{G}$ then the origin is monodromic if and only if conditions (a) and (b) hold and $(2 + M - m) \alpha_j - 2 \beta_j \leq 0$ (resp. $\geq 0$) for all $j = 1, \ldots, k$ when the flow rotates counterclockwise (resp. clockwise)..
\end{itemize}
The origin of the vector field $\mathcal{Z}$ belongs to the $\mathcal{G}$-monodromic class when the conditions in (ii) are satisfied. We notice that, by construction, this monodromic class satisfies that $1 \leq \# W(\mathbf{N}(\mathcal{Z})) \leq 3$.

We stress that the $\mathcal{G}$-monodromic class with cubic lower order terms can be embedded into the scheme of Theorems \ref{t:Teo-main1} and \ref{t:Teo-main2}. The proof of this fact follows exactly the same lines than those shown in Section \ref{s:sk} for the class $S_{3 \omega}$. The connection is again Proposition 13 of \cite{GaGi3} where it is proved that the $\mathcal{G}$-monodromic class with cubic lower order terms shares some properties with the class $S_{3 \omega}$. In particular it shares all the features indicated  for the class $S_{3 \omega}$, namely either $W(\mathbf{N}(\mathcal{Z})) = \{ (2,1), (1,2) \}$ and $\eta = 1$ or $W(\mathbf{N}(\mathcal{Z})) = \{ (1,1), (1,2) \}$. Also shares the two different monodromic cases (I) and (II) so that we get an analogous proof of the fact that the origin of the $\mathcal{G}$-monodromic class with cubic lower order terms can be desingularized either with the processes established in Theorem \ref{t:Teo-main1} or in Theorem \ref{t:Teo-main2}.

However, there are examples of $\mathcal{G}$-monodromic singularities with $\# W(\mathbf{N}(\mathcal{Z})) > 2$ and, therefore, cannot be studied with the tools developed in this work. For example the quintic differential system
$$
\dot{x} = -x^4 y - 2 x^2 y^3 - y^7, \ \ \dot{y} = - x^3 y^2 - x y^4 + x^7,
$$ corresponds to a vector field $\mathcal{Z}$ in the $\mathcal{G}$-monodromic class with quintic lower order terms with $\# W(\mathbf{N}(\mathcal{Z})) = 3$.

\section{About a conjecture}\label{s:conjuecture}

In this section we discuss a conjecture formulated in \cite{GaGi3} and we relate it with the contents of this paper.  Following the notation in Section \ref{s:notation},  we consider an analytic family of vector fields $\mathcal{Z}$ given in \eqref{e:campZ} with a monodromic singularity at the origin and Newton diagram $\mathbf{N}(\mathcal{Z})$  with fixed weights $W(\mathbf{N}(\mathcal{Z})$, given in  \eqref{e:WNZ}. The vertex of $\mathbf{N}(\mathcal{Z})$ connecting the edges $i-1$ and $i$ with vector coefficient $(\mathfrak{a}_i, \mathfrak{b}_i)$ is called a {\it nondegenerate vertex} if $p_{i} \mathfrak{b}_i -q_{i} \mathfrak{a}_i \neq 0$ and $p_{i-1} \mathfrak{b}_i - q_{i-1} \mathfrak{a}_i \neq 0$.
\medskip

Given a weight $(p_i, q_i) \in W(\mathbf{N}(\mathcal{Z}))$ we may order the monomials of $\mathcal{Z}$ to write the $(p_i ,q_i)$-quasihomogeneous expansions $\mathcal{Z} = \mathcal{Z}_{r_i} + \cdots$, a sum of $(p_i ,q_i)$-quasihomogeneous vector fields of increasing weighted degrees where the leading vector field $\mathcal{Z}_{r_i} = P_{p_i+r_i}(x,y) \partial_x + Q_{q_i+r_i}(x,y) \partial_y \not\equiv 0$ is a $(p_i ,q_i)$-quasihomogeneous vector field of degree $r_i$ (the minimum degree in that expansion). After doing the weighted $(p_i, q_i)$-polar blow-up $(x,y) \mapsto (\rho, \varphi)$ given by
\begin{equation}\label{weighted-polar}
(x,y) = (\rho^{p_i} \cos\varphi, \rho^{q_i} \sin\varphi),
\end{equation}
and removing the maximum common power of $\rho$ times $\mathcal{D}_i(\varphi) = p_i \cos^2\varphi + q_i \sin^2\varphi > 0$ by a time-rescaling, $\mathcal{Z}$ is transformed into the polar vector field $\dot{\rho} = R_{i}(\varphi, \rho) = F_{r_i}(\varphi) \rho + O(\rho^2)$, $\dot{\varphi} = \Theta_{i}(\varphi, \rho) = G_{r_i}(\varphi) + O(\rho)$ for $i=1, \ldots, \ell$, where
\begin{eqnarray*} \label{def-GjFjD}
F_{r_i}(\varphi) &=& P_{p_i+r_i}(\cos\varphi, \sin\varphi) \cos\varphi + Q_{q_i+r_i}(\cos\varphi, \sin\varphi) \sin\varphi, \nonumber \\
G_{r_i}(\varphi) &=& p_i \, Q_{p_i+r_i}(\cos\varphi, \sin\varphi) \cos\varphi -q_i \, P_{q_i+r_i}(\cos\varphi, \sin\varphi) \sin\varphi.
\end{eqnarray*}
are trigonometric polynomials. Now we are in position to define the quantities
$$
\xi_{p_i q_i} = \mathrm{PV} \int_0^{2 \pi} \mathcal{F}_{p_i q_i}(\varphi) \, d \varphi, \ \ i=1, \ldots, \ell,
$$
with $\mathcal{F}_{p_i q_i}(\varphi) := F_{r_i}(\varphi)/G_{r_i}(\varphi)$, in case they exits. Under some additional conditions, the linear part of the Poincar\'e map $\Pi(x) = \eta x + o(x)$ associated to the origin of $\mathcal{Z}$ has leading coefficient
\begin{equation}\label{main-formula-eta}
\eta = \exp\left( \pm \sum_{i=1}^\ell \lambda_i \, \xi_{p_i q_i} \right),
\end{equation}
with
$$
\lambda_1 = 1, \ \ \ \ \lambda_i = \frac{p_{i} \mathfrak{b}_i -q_{i} \mathfrak{a}_i}{p_{i-1} \mathfrak{b}_i-q_{i-1} \mathfrak{a}_i}, \ \ i=2, \ldots, \ell.
$$
In the formula \eqref{main-formula-eta} the positive sign is taken when the flow rotates counterclockwise, in other words when $G_{r_i}(\varphi) \geq 0$ and the negative sign otherwise.

In \cite{GaGi3} it is formulated the following conjecture and it was checked to be true for a wide family of monodromic singularities.

\begin{conjecture}\label{Conject-xi}
{\rm  Given any analytic vector field $\mathcal{Z}$ with a monodromic singular point, there are analytic coordinates such that the leading coefficient $\eta$ of the asymptotic Dulac expansion $\Pi(x) = \eta x + o(x)$ of the Poincar\'e map $\Pi$ has the form \eqref{main-formula-eta} provided $\xi_{p_i q_i}$ exists for all $(p_i, q_i) \in \mathbf{N}(\mathcal{Z})$ and all the vertices of $\mathbf{N}(\mathcal{Z})$ are nondegenerate. }
\end{conjecture}

Let $(p, q) \in W(\mathbf{N}(\mathcal{Z}))$ and consider the integral $\xi_{pq}$, if it exists. We assume that the monodromic polycycle associated to the origin of $\mathcal{Z}$ is hyperbolic and that it has a characteristic direction such that, in the desingularization process, the blow-up associated to the regular part of the flow around a hyperbolic saddle is given by either $(x,y) \mapsto (x, w)$ with $w= y^{m}/x^{n}$ or $(x, y) \mapsto (z, y)$ with $z = x^{m}/y^{n}$ for some positive integers $m$ and $n$. Associated to each blow-up, and after a convenient time-rescaling removing the common factor, $\mathcal{Z}$ is transformed into an analytic vector field $\dot{x} = X(x, w)$, $\dot{w} = W(x, w)$ or $\dot{z} = Z(z, y)$, $\dot{y} = Y(z, y)$ with $W(0, w)$ and $Z(z, 0)$ without real roots, respectively. Now we associate to each vector field the functions
$$
g_1(w) = \frac{\partial}{\partial x} \left. \left(\frac{X(x, w)}{W(x, w)}\right) \right|_{x=0}, \ \ g_2(z) = \frac{\partial}{\partial y} \left. \left(\frac{Y(z, y)}{Z(z, y)}\right) \right|_{y=0},
$$
and the quantities
$$
t_1 = \int_{-\infty}^{\infty} g_1(w) \, dw, \ \ t_2 = \int_{-\infty}^{\infty} g_2(z) \, dz.
$$
The key point is to relate $\xi_{pq}$ with either $t_1$ or $t_2$, respectively. To do it we consider the trigonometric transformations $\varphi \mapsto w$ or $\varphi \mapsto z$ associated to each blow-up given by
\begin{equation}\label{changes-w-z}
w = \sin^{m}\varphi/\cos^{n}\varphi, \ \ z = \cos^{m}\varphi/\sin^{n}\varphi.
\end{equation}

We define the function $\mathcal{F}_{pq}(\varphi) = F_{r}(\varphi)/G_{r}(\varphi)$ so that $\xi_{p q} = \mathrm{PV} \int_0^{2 \pi} \mathcal{F}_{pq}(\varphi) d \varphi$ and we define the functions $\hat{g}_j(\varphi)$ by equating the differential 1-forms $g_1(w) \, dw = \hat{g}_1(\varphi) \, d\varphi$ and $g_2(z) \, dz = \hat{g}_2(\varphi) \, d\varphi$, respectively.  We take the differences
$$
h_1(\varphi) = \mathcal{F}_{pq}(\varphi) - \hat{g}_1(\varphi), \ \ h_2(\varphi) = \mathcal{F}_{pq}(\varphi) - \hat{g}_2(\varphi),
$$
respectively. We emphasize that the maps \eqref{changes-w-z} are not diffeomorphisms on $\mathbb{S}^1$ and when we apply it to some integral over the interval $[0, 2 \pi]$ in the variable $\varphi$ like $\xi_{pq}$ then the outcome can be a linear combination of integrals of either $g_1(w)$ or $g_2(z)$ over unbounded intervals in the variables $w$ or $z$, respectively.

If the additional condition
$$
\mathrm{PV} \int_0^{2 \pi} h_j(\varphi) d \varphi = 0
$$
holds then $\xi_{pq} = k_{pq} \, t_j$ for $j=1$ or $j=2$, respectively, for some non-negative integer $k_{pq}$. Actually, if $m$ or $n$ are even then $k_{pq}=0$ whereas $k_{pq}=2$ otherwise. This is in good agreement with the results stated in Theorems \ref{t:v1} and \ref{t:v2}.

\section{Proofs on the minimal model} \label{S-Proofs1}

\subsection{Proof of Theorem \ref{t:prop-mon-family}}

\begin{proof}
We have obtained the sets $\mathcal{E}$ and $\Lambda$ computing the necessary and sufficient monodromic conditions for the origin of \eqref{e:minimal} according to the algorithm developed in \cite{AGR, AGR2} based in the Newton diagram $\mathbf{N}(\mathcal{X})$ of \eqref{e:minimal}. More specifically, $\mathbf{N}(\mathcal{X})$ has two exterior vertices $(0, 2s)$ and $(2 r, 0)$ and one interior vertex $(2 \alpha, 2 \beta)$ all of them with even coordinates. The first condition in $\Lambda$ appears by imposing the necessary monodromic condition $A C < 0$ where $(A, 0)$ and $(0, C)$ are the vector coefficients of the exterior vertices of $\mathbf{N}(\mathcal{X})$. Up to know conditions (i) and (ii) of Theorem 3 in \cite{AGR} hold.

Notice that from the conditions in $\Lambda$ we get that  $B^2+D^2 \neq 0$, so that the coefficient vector $(B, D)$ of the interior vertex never vanishes. In particular $\mathbf{N}(\mathcal{X})$ contains the three former vertices and it is made by two edges whose slopes have associated weights $W(\mathbf{N}(\mathcal{X})) = \{(s-\beta, \alpha), (\beta, r-\alpha) \} \subset \mathbb{N}^2$. It is also worth to emphasize that the vector field $\mathcal{X}$ associated to system  \eqref{e:minimal} satisfies $\mathcal{X} = \mathcal{X}_\Delta$.

\medskip

Let $(p_1, q_1) = (s-\beta, \alpha)$ and $(p_2, q_2) = (\beta, r-\alpha)$ be the weights associated to the upper and lower edge of $\mathbf{N}(\mathcal{X})$, respectively. Then, the last condition in $\Lambda$ corresponds to the necessary convex hull condition $p_1/q_1 > p_2/q_2$ of $\mathbf{N}(\mathcal{X})$.

Consider the  $(p_i, q_i)$-quasihomogeneous expansion of $\mathcal{X}$ is $\mathcal{X} = \mathcal{X}_{r_i} + \cdots$ with leading $(p_i, q_i)$-quasihomogeneous vector field $\mathcal{X}_{r_i}$ of  weighted degree $r_i \in \mathbb{N}$. Then, we apply the conservative-dissipative decomposition $\mathcal{X}_{r_i} = \mathcal{X}_{H_i} + \mu_i \, \mathcal{D}_i$, being $\mathcal{X}_{H_i} = - \partial_y H_i(x,y) \partial_x + \partial_x H_i(x,y) \partial_y$ a Hamiltonian vector field and $\mathcal{D}_i = p_i x \partial_x + q_i y \partial_y$ the Euler field. After some computations we get
$$
\mathcal{X}_{r_1} = (A y^{2s-1} + B x^{2 \alpha} y^{2 \beta-1}) \partial_x + D x^{2 \alpha-1} y^{2 \beta} \partial_y
$$
with weighted degree $r_1=  s (2 \alpha-1) + \beta- \alpha \in \mathbb{N}$ where
$$
\mu_{r_1}(x,y) = \frac{B \alpha + D \beta}{s \alpha} x^{2 \alpha-1} y^{2 \beta -1}
$$
and Hamiltonian
$$
H_1(x,y) = \frac{1}{2 s \alpha} \left( (D s - B \alpha - D \beta) x^{2 \alpha} y^{2 \beta} - A \alpha y^{2 s} \right),
$$
whereas
$$
\mathcal{X}_{r_2} = B x^{2 \alpha} y^{2 \beta-1} \partial_x + (C x^{2r-1} + D x^{2 \alpha-1} y^{2 \beta}) \partial_y
$$
with weighted degree $r_2 = \alpha - \beta + r (2 \beta -1) \in \mathbb{N}$ where
$$
\mu_{r_2}(x,y) = \frac{B \alpha + D \beta}{r \beta} x^{2 \alpha-1} y^{2 \beta -1}
$$
and Hamiltonian
$$
H_2(x,y) = \frac{-1}{2 r \beta} \left( (B(r-\alpha - D \beta) x^{2 \alpha} y^{2 \beta} -  C \beta x^{2 r} \right).
$$
Now we are in position to compute the constant $\beta_V = -A C /(4 r s)$ associated to the interior vertex as described in  \cite{AGR}. So the necessary monodromic condition $\beta_V > 0$ yields the first condition in $\Lambda$ again.

We, now, factorize in $\mathbb{R}[x,y]$ with prime factors the Hamiltonian as $H_1(x,y) = k_1 y^{2 \beta} (y^{p_1} - \lambda_1 x^{q_1}) (y^{p_1} - \lambda_2 x^{q_1})$ with constant $k_1 = -A/(2 s)$ and coefficients $\lambda_2 = - \lambda_1 = ((D s - B \alpha - D \beta)/ (A \alpha))^{1/2}$. The penultimate condition in $\Lambda$ assures that $\lambda_i$ are non-real so that $H_1$ has no strong factor.

Similarly, $H_2(x,y)$ factorizes into the form $H_2(x,y) = k_2 x^{2 \alpha} (y^{p_2} - \gamma_1 x^{q_2}) (y^{p_2} - \gamma_2 x^{q_2})$ with constant $k_2 = (B (\alpha - r) + D \beta)/(2 r \beta)$ and coefficients $\gamma_2 = -\gamma_1 = (-\beta C /(B (\alpha - r) + D \beta))^{1/2}$. So the last condition in $\Lambda$ implies that $\gamma_i$ are non-real so that $H_2$ has no strong factor.

In short we have proved that on $\mathcal{E} \cap \Lambda$ family \eqref{e:minimal} has a monodromic singularity at the origin by Theorem 3 in \cite{AGR}.
~\end{proof}

\subsection{Proof of Proposition \ref{p:prop-dir.car-family}}

\begin{proof}
We take polar coordinates $(\theta, \rho)$ and we obtain the polar vector field associated to family \eqref{e:minimal} given by
\begin{eqnarray*}
\dot{\rho} &=& A_1(\theta) \rho^{2r-1} + A_2(\theta) \rho^{2s-1} + A_3(\theta) \rho^{2(\alpha+\beta)-1}, \\
\dot{\theta}  &=& B_1(\theta) \rho^{2r-2} + B_2(\theta) \rho^{2s-2} + B_3(\theta) \rho^{2(\alpha+\beta)-2},
\end{eqnarray*}
where all the exponents are positive integers in $\mathcal{E} \cap \Lambda$, the functions $A_j(\theta)$ are certain trigonometric polynomials and
$$
B_1(\theta) = C \cos^{2 r}\theta, \ \  B_2(\theta) = -A \sin^{2 s}\theta, \ \  B_3(\theta) = (D-B) \sin^{2 \beta}\theta \cos^{2 \alpha}\theta.
$$
We divide both components of the polar vector field by the common factor $\rho^{\gamma}$ with $\gamma = \min\{2r-2, 2s-2, 2(\alpha + \beta)-2\}$. So we split the analysis in three subcases according with the minimum value of these exponents.

If $r < \min\{s, \alpha + \beta\}$ then $\gamma = 2r-2$ so that the rescaled polar vector field has the component $\dot{\theta} = B_1(\theta) + O(\rho)$.
Conditions $\mathcal{E} \cap \Lambda$ imply that $C \neq 0$,
hence there is just one characteristic direction given by $\theta= \pi/2$.

If $s < \min\{r, \alpha + \beta\}$ then $\gamma = 2s-2$ and the rescaled polar vector field has the component $\dot{\theta} = B_2(\theta) + O(\rho)$.
Conditions $\mathcal{E} \cap \Lambda$ imply that $A \neq 0$, so we get one characteristic direction given by $\theta= 0$.

If $\alpha + \beta < \min\{r, s\}$ then $\gamma = 2(\alpha + \beta)-2$ and the rescaled polar vector field has the component $\dot{\theta} = B_3(\theta) + O(\rho)$. If $D = B$ then  $B_3(\theta) \equiv 0$ and the origin is no longer monodromic. If $D \neq B$ then we get two characteristic direction given by $\theta = 0$ and $\theta = \pi/2$.
\end{proof}

\subsection{Proof of Proposition \ref{p:prop-dir.car-family-2}}

\begin{proof}
Take $(p,q)$-weighted polar coordinates with $(p,q)=(p_1, q_1)$, we obtain the polar vector field associated to family \eqref{e:minimal} given by \eqref{e:polarspolars} with
$$
\Theta(0, \theta) =\frac{\sin^{2 \beta}\theta \, P_1(\theta)}{(s -\beta) \cos^2 \theta + \alpha \sin^2 \theta}
$$
being $P_1(\theta) = -A \alpha \sin^{2 (s - \beta)} \theta  + (\alpha-B + D (s - \beta)\cos^{2 \alpha} \theta$ a trigonometric polynomial without real roots when we restrict the parameters to  $\mathcal{E} \cap \Lambda$
 given in Theorem \ref{t:prop-mon-family}, hence $\theta_* =0$.

Analogously, using $(p,q)$-weighted polar coordinates with $(p,q)=(p_2, q_2)$ the associated polar vector field to \eqref{e:minimal} is \eqref{e:polarspolars} where now
$$
\Theta(0, \theta) = \frac{\cos^{2 \alpha}\theta \, P_2(\theta)}{\beta \cos^2 \theta + (r-\alpha) \sin^2 \theta}
$$
with $P_2(\theta) = C \beta \cos^{2 (r - \alpha)} + ((B (\alpha -r) + D \beta) \sin^{2 \beta} \theta$ being a trigonometric polynomial without real roots on $\mathcal{E} \cap \Lambda$. Therefore we get $\theta_* = \pi/2$.
\end{proof}

\subsection{Proof of Theorem \ref{t:Teo-main1}}

\begin{proof}
We restrict the parameters and exponents of \eqref{e:minimal} to $\mathcal{E}^* \cap \Lambda^*$. Then the possibilities (i) and (ii) of Proposition \ref{p:prop-dir.car-family} are not allowed. Regarding condition (iii) of Proposition \ref{p:prop-dir.car-family} we check that when $\alpha + \beta < \min\{r, s\}$ all the conditions are compatible and give $D \neq B$ in that case. In summary, \eqref{e:minimal} restricted to $\mathcal{E}^* \cap \Lambda^*$ always possesses two characteristic directions $\{\theta_1^*, \theta_2^*\} = \{0, \pi/2 \}$.

Let $(p_1, q_1) = (s-\beta, \alpha)$ and $(p_2, q_2) = (\beta, r-\alpha)$ be the weights associated to the upper and lower edge of $\mathbf{N}(\mathcal{X})$, respectively. We also can check that $p_i = q_i$ for some $i=1, 2$, is not compatible on $\mathcal{E}^* \cap \Lambda^*$ and therefore $(1,1) \not\in W(\mathbf{N}(\mathcal{X}))$.

\medskip

Next, we see that the desingularization process of the characteristic directions consists in applying the following sequence of blow-ups.
\begin{itemize}
  \item[(i)] For the characteristic direction $0$: $(x, y) \mapsto (z_2,w)$ with $z_2=x^{q_2}/y^{p_2}$ and $w = y/x$; and $(x,y) \mapsto (x, w_2)$ with $w_2 = y^{p_2}/x^{q_2}$.
  \item[(ii)] For the characteristic direction $\pi/2$: $(x,y) \mapsto (z_1, y)$ with $z_1 = x^{q_1}/y^{p_1}$ and $(x, y) \mapsto (z, w_1)$ with $z = x/y$ and $w_1=y^{p_1}/x^{q_1}$.
\end{itemize}

Indeed, associated to the characteristic direction $\theta_1^* = 0$ ($y=0$), we first consider the blow-up $(x, y) \mapsto (z_2,w)$ with $z_2=x^{q_2}/y^{p_2}$ and  $w = y/x$. We remove the common factor $z_2^{\gamma_1} w^{\gamma_2}$ with $\gamma_1=-1+(-2 + r + \alpha + \beta)/$ $(r - \alpha - \beta)$ and $\gamma_2 ={(r - \alpha + \beta - 2 r \beta)/(-r + \alpha + \beta)}$. Observe that this step can only be performed if and only if $r\neq \alpha+\beta$, that is, if and only if $p_2\neq q_2$.
 and we obtain the differential system
\begin{eqnarray*}
\dot{z}_2 &=& z_2 \left[ B (r - \alpha)- D \beta  - \beta C z_2^2 +   A (r-\alpha) z_2^{e_1} w^{e_2}  \right], \\
\dot{w} &=& w \left[ D-B + C  z_2^2 - A z_2^{e_1} w^{e_2} \right],
\end{eqnarray*}
with exponents $e_1, e_2 \in \mathbb{N} \cup \{0\}$ defined in \eqref{exponents-ei}. Taking into account the conditions in $\Lambda^*$, we obtain that the origin is a hyperbolic saddle. Assuming, without loss of generality, a counterclockwise direction of rotation, then
$D-B<0$ and $B (r - \alpha)- D \beta>0$, and the hyperbolicity ratio is
$$
\lambda_1 = \frac{B-D}{B (r - \alpha) - D \beta} > 0,
$$
on $\Lambda^*$.

Finally, we perform the blow-up $(x,y) \mapsto (x, w_2)$ with $w_2 = y^{p_2}/x^{q_2}$. After a time-rescaling obtained by removing the common factor $w_2^{1-1/\beta} \, x^{(-r + \alpha - \beta + 2 r \beta)/\beta}$ (having non-negative exponents on $\mathcal{E}^*$), we obtain a differential system
\begin{eqnarray*}
\dot{x} &=& B x w_2 + A x^{e_7+1} w_2^{e_8},\\
\dot{w}_2 &=& \beta C -(B r - B \alpha - D \beta) w_2^2  -  A (r-\alpha) x^{e_7} w_2^{e_8+1},
\end{eqnarray*}
with exponents $e_7, e_8 \in \mathbb{N} \cup \{0\}$ defined in \eqref{exponents-ei}. Moreover $\dot{w}_2|_{x=0} = \beta C -(B r - B \alpha - D \beta) w_2^2$ has no real roots on $\Lambda^*$.
So, in this local chart of the blow-up, the flow is regular.

\medskip

Associated to the characteristic direction $\theta_2^* = \pi/2$ ($x=0$), we first perform the blow-up $(x, y) \mapsto (z, w_1)$ with $z = x/y$ and $w_1=y^{p_1}/x^{q_1}$. After a time-rescaling that removes the common factor  $w_1^{\gamma_1}z^{\gamma_2}$ with $\gamma_1=
(2 (-1 + \alpha$ $ + \beta))/(s - \alpha - \beta)$ and $\gamma_2={(s + \alpha - 2 s \alpha - \beta)/(-s + \alpha + \beta)}$ (this step can only be performed if $s\neq \alpha + \beta$ or, equivalently, if $p_1\neq q_1$), and a time-reversion to preserve the previously defined direction of rotation, and we obtain the differential system
\begin{eqnarray*}
\dot{z} &=& z \left[ (D-B) - A w_1^2 + C  w_1^{e_3}  z^{e_4}  \right],\\
\dot{w}_1 &=& w_1 \left[ (B \alpha - D (s - \beta)) +A \alpha w_1^2 - C(s-\beta) w_1^{e_3}  z^{e_4}   \right] ,
\end{eqnarray*}
with exponents $e_3, e_4 \in \mathbb{N} \cup \{0\}$ given in \eqref{exponents-ei}. We conclude that the origin of the last system is a hyperbolic saddle with hyperbolicity ratio
$$
\lambda_2 = \frac{B-D}{B \alpha - D (s - \beta)} > 0,
$$
on $\Lambda^*$.

Finally, we perform the blow-up $(x,y) \mapsto (z_1, y)$ with $z_1 = x^{q_1}/y^{p_1}$. After the time-rescaling obtained by removing the common factor $z_1^{(\alpha-1)/\alpha} \, y^{(-s - \alpha + 2 s \alpha + \beta)/\alpha}$ (again notice that the exponents are non-negative on $\mathcal{E}$) we obtain the differential system
\begin{eqnarray*}
\dot{z}_1 &=& \alpha A - (D s - B \alpha - D \beta) z_1^2 - C (s - \beta) z_1^{1+e_6} y^{e_5}, \\
\dot{y} &=& D y z_1 +  C y^{1+e_5} z_1^{e_6},
\end{eqnarray*}
with exponents $e_5, e_6 \in \mathbb{N} \cup \{0\}$, see its expressions in \eqref{exponents-ei}. Since $\dot{z}_1|_{y=0} = \alpha A - (D s - B \alpha - D \beta) z_1^2$ has no real roots on $\mathcal{E}^*$, in this local chart the flow is regular.
\end{proof}

\subsection{Proof of Theorem \ref{t:Teo-main2}}

\begin{proof}
We take $p_1=q_1$, or equivalently we take into account the parameters' restriction  \eqref{parametersp1q11}. Therefore we get $p_2 \neq q_2$ because $\# \mathbf{N}(\mathcal{X}) = 2$. The polar vector field, after removing the common factor $r^{(2 (s-1))}$ has the angular component $\dot{\theta} = \sin^{2(s-\alpha)}(\theta) ((D-B) \cos^{2 \alpha}(\theta) -  A \sin^{2\alpha}(\theta)) + O(\rho)$. Thus the only characteristic direction is $\theta_* = 0$ because $A (D-B) < 0$ on $\Lambda^\dag$.

Now, it is only necessary to follow the same steps as in  the proof of Theorem \ref{t:Teo-main1} associated to the characteristic direction $\theta_* = 0$, but just taking into account the condition \eqref{parametersp1q11}, which leads to the fact that the  desingularization process is $(x,y) \mapsto (x, w_2)$ with $w_2 = y^{p_2}/x^{q_2}$ and $(x, y) \mapsto (z_2,w)$ with $z_2=x^{q_2}/y^{p_2}$ and $w = y/x$.~\end{proof}

\subsection{Proof of Theorem \ref{t:Teo-main2-new}}

\begin{proof} The proof is analogous to the ones of  Theorems \ref{t:Teo-main1} and \ref{t:Teo-main2}.
Proceeding as in these proofs, and assuming that the parameters belong to $\mathcal{E}^\ddag \cap \Lambda^\ddag$, one readily obtains that the blow-ups described in Definition \ref{d:typeB} desingularize the characteristic directions $0$ and $\pi/2$. This procedure yields hyperbolic saddle points at the origin for the corresponding systems \eqref{e:cordA}, as well as regular flow boxes surrounding the solutions ${x=0}$ and ${y=0}$ for systems \eqref{e:cordB} and \eqref{e:cordC}, respectively.~
\end{proof}

\section{Proof of Theorems \ref{t:v1} and \ref{t:v2}} \label{S-Proofs2}

The following result characterizes the linear term of the transition maps for characteristic directions of type $\mathcal{A}$.

\begin{proposition}\label{p:propotocha}
Let the origin of an analytic differential system $\dot{x}=P(x,y)$, $\dot{y}=Q(x,y)$, be a monodromic point. Assume that the orbits rotate in a counterclockwise direction.
Let $\theta = 0,\pi$ be  $\mathcal{A}$-type characteristic directions with weights $(p,q)$. For a fixed $\ve\gtrsim 0$, consider the transition maps of the flow $\Delta_{0,\pi}^\ve:\{y=-\ve x\}\rightarrow \{y=\ve x\}$  for  $x\gtrsim 0$ and  $x\lesssim 0$, respectively. Then,
$$
\Delta_{0,\pi}^\ve(x,-\ve x)=(D_{0,\pi}^\ve x+o(x), \ve\,D_{0,\pi}^\ve x+o(x))
$$
where
\begin{enumerate}[(a)]
\item If $p$ is even, then $
D_{0,\pi}^\ve=\exp\left\{R(\ve)\right\}.$
\item If $p$ is odd and $q$ is even, then
$$
D_{0,\pi}^\ve=\exp\left\{R(\ve)+ \frac{\nu}{\lambda p} \mathrm{PV} \int_{-\infty}^{\infty}
\left.\frac{\partial}{\partial x} \left(\frac{X(x,v)}{V(x,v)}\right)\right|_{x=0} dv\right\},
$$  where $\nu=1$ when $\theta=0$ and $\nu=-1$ when $\theta=\pi$.
\item If both $p$ and $q$ are odd, then
$$
D_{0,\pi}^\ve=\exp\left\{R(\ve)+ \frac{1}{\lambda p} \mathrm{PV} \int_{-\infty}^{\infty}
\left.\frac{\partial}{\partial x} \left(\frac{X(x,v)}{V(x,v)}\right)\right|_{x=0} dv\right\}.
$$
\end{enumerate}
In all the expressions above,
\begin{equation}\label{e:hypratio1}
\lambda=-\left.\left(\frac{\partial W(z,w)/\partial w}{\partial Z(z,w)/\partial z}\right)\right|_{(z,w)=(0,0)}
\end{equation} is the hyperbolicity ratio of the saddles appearing in the associated with the first blow-up described in Definition \ref{d:typeA}, where $Z$ and $W$ denote the components of the differential system \eqref{e:cordA}; and
$X$ and $V$ are the components of the differential system \eqref{e:cordB} associated with the second blow-up described in Definition \ref{d:typeA}; and $R(\ve)$ is a continuous function satisfying $\lim\limits_{\ve \to 0^+} R(\ve) =~0$.
\end{proposition}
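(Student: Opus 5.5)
The plan is to decompose the transition $\Delta^\ve_{0}$ from $\{y=-\ve x\}$ to $\{y=\ve x\}$ (with $x\gtrsim 0$) according to the two charts of Definition \ref{d:typeA}; the case $\theta=\pi$ is analogous with $x\lesssim0$. In the chart $(z,w)=(x^q/y^p,\,y/x)$ the rays $y=\pm\ve x$ become the transversals $w=\pm\ve$. For $w>0$ small, an orbit enters near the stable separatrix of the hyperbolic saddle of \eqref{e:cordA} and leaves near the unstable one. That unstable separatrix is contained in the exceptional divisor, which corresponds to $x=0$ in chart \eqref{e:cordB} with $v=y^p/x^q=1/z$. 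Along $\{x=0\}$ the flow is regular, since $V(0,v)\neq 0$, and it carries the orbit from $v=+\infty$ to $v=-\infty$ or back, producing an exit near the other separatrix. A second saddle passage then brings the orbit to $w=\ve$. The transition factors as
\[
\Delta^\ve_0=(\text{saddle exit})\circ(\text{regular transit})\circ(\text{saddle entry}).
\]
I will compute the linear coefficient in $x$ of each factor.

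\emph{Step 1 (saddle passages).} For the hyperbolic saddle with ratio $\lambda=a/b$, the Dulac map from $\{w=\ve\}$ to $\{z=\delta\}$ is $w\mapsto c\,w^{\lambda}(1+o(1))$ in the $z$-variable. Composing an entry corner map with an exit corner map gives exponents $\lambda$ and $1/\lambda$, so the powers cancel and only the constants survive. Because the system is analytic, I will use a $C^1$ (Poincaré–Dulac, finitely smooth) normalization of the saddle, for instance $\dot z=-az$, $\dot w=bw(1+\dots)$. The surviving constants then depend continuously on $\ve$ and $\delta$, and their logarithms combine into part of $R(\ve)$.

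\emph{Step 2 (regular transit).} On the chart \eqref{e:cordB} the orbit equation is $dx/dv=X/V$ with $X(0,v)=0$. Its first variational equation along $x=0$ gives
\[
x(v_1)=x(v_0)\exp\Big(\int_{v_0}^{v_1}\partial_x(X/V)|_{x=0}\,dv\Big)\,(1+o(1)).
\]
I then translate the $x$-coordinate of this chart into the saddle coordinate at the exit, $w=y/x$ with $y^p=v x^q$. This is where the factor $1/(\lambda p)$ comes from: the transversal coordinate of the flow box is the $p$-th root of the saddle variable raised to a power tied to $\lambda$. In the limit $v_0\to\mp\infty$, $v_1\to\pm\infty$ the integral becomes the principal value; the divergent tails cancel against the saddle constants, leaving a remainder $R(\ve)\to0$.

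\emph{Step 3 (parity cases).} The map $(x,y)\mapsto (x,v)$ with $v=y^p/x^q$ is not injective, and how it covers the half-planes determines the cases.
\begin{itemize}
\item If $p$ is even, $y$ and $-y$ give the same $v$. The two sides $y>0$ and $y<0$ of $\theta=0$ correspond to the same half-line of $v$, the passage along the divisor collapses, and there is no integral term: case (a).
\item If $p$ is odd and $q$ is even, $v$ has the sign of $y$ for both $x>0$ and $x<0$. The transits near $\theta=0$ and $\theta=\pi$ run over $v$ in opposite orientations, which produces $\nu=\pm1$: case (b).
\item If $p$ and $q$ are both odd, $x\mapsto -x$ together with $y\mapsto -y$ preserves $v$, so both directions traverse $\mathbb{R}$ with the same orientation: case (c).
\end{itemize}
I fix the orientation using the counterclockwise rotation hypothesis.

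The main obstacle is Step 2 combined with the matching: showing rigorously that the truncated integral plus the saddle corner constants converge to the principal value with a remainder $R(\ve)\to0$. It must also be checked that the coefficient is exactly $1/(\lambda p)$ and not some other power. I would first verify this on the explicit model, the minimal model of Theorem \ref{t:Teo-main1}, where the saddles are linearizable and the corner maps are explicit. The general case would follow by $C^1$ conjugacy of the saddle to its linear part, which preserves the linear term.
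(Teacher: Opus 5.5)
Your decomposition (entry corner at the saddle of \eqref{e:cordA}, regular transit along $\{x=0\}$ of \eqref{e:cordB}, exit corner at the same saddle) matches the paper's proof. So does your orientation analysis of $v=y^p/x^q$ in the three parity cases. Replacing the Dulac-map Lemma~\ref{l:lema8} by a $C^1$ linearization is a legitimate variant. The product of the two corner constants then becomes a ratio of the conjugacy factors at $(0,\pm\ve)$ and $(\pm\delta,0)$, which is continuous and tends to $1$.

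The gap is that the actual content of the proposition is what you leave as ``the main obstacle'': the exponent $1/(\lambda p)$ and the passage to the principal value. Checking the minimal model cannot fix a general exponent, and your $p$-th-root heuristic has the mechanism backwards. The exponent comes from explicit leading-term bookkeeping. Use the intermediate sections $\{z=\mp\delta\}=\{v=\mp1/\delta\}$ and the relations $x^{q-p}=zw^p$, $w^p=v\,x^{q-p}$:
\begin{itemize}
\item the point $(x,-\ve x)$ has $|z|=\ve^{-p}|x|^{q-p}$;
\item the entry corner gives $|w|\simeq A_1|z|^{\lambda}$;
\item the flow box multiplies $x$ by $t(\delta)=\exp\int_{-1/\delta}^{1/\delta}\partial_x(X/V)|_{x=0}\,dv$, hence multiplies $w$ on $\{z=\delta\}$ by $t^{(q-p)/p}$;
\item the exit corner raises this factor to the power $1/\lambda$;
\item on $\{w=\ve\}$ one recovers $x$ through $x^{q-p}=z\ve^{p}$.
\end{itemize}
Altogether
\[
D_0^\ve=t(\delta)^{\frac{1}{\lambda p}}\bigl(A_2A_1^{1/\lambda}\bigr)^{\frac{1}{q-p}},
\]
and the exponent is $\frac{1}{\lambda p}=\frac{q-p}{p}\cdot\frac{1}{\lambda}\cdot\frac{1}{q-p}$.

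The second missing idea is why the limit is a principal value with a remainder $R(\ve)$. The key fact is that $D_0^\ve$ does not depend on the auxiliary $\delta$. Meanwhile $A_2A_1^{1/\lambda}$ is the exponential of two principal-value integrals: one over $[-\ve,\ve]$ in $w$, and $-\frac{1}{\lambda}$ times one over $[-\delta,\delta]$ in $z$. Both integrands have odd simple poles, so the second integral tends to $0$ with $\delta$. Letting $\delta\to0$ at fixed $\ve$ therefore forces $t(\delta)$ to converge. Because the limits $\pm1/\delta$ are symmetric, the limit is $\exp\mathrm{PV}\int_{-\infty}^{\infty}\partial_x(X/V)|_{x=0}\,dv$, and the $\ve$-part gives $R(\ve)\to0$.

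In particular, no divergent tails have to cancel against the saddle constants, since the corner product stays bounded. With this in place, the parity cases only replace $t(\delta)$ by $1$, by $t^{\pm1}$, or by $t$ for both directions, as you describe.
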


\begin{proof}
We consider the characteristic directions $\theta_* \in \{0, \pi\}$ and the systems \eqref{e:cordA} and \eqref{e:cordB} in Definition \ref{d:typeA}, obtained via the blow-ups
$$
(z, w)_A = \left(\frac{x^q}{y^p}, \frac{y}{x}\right)_A \quad \text{and} \quad (x, v)_B = \left(x, \frac{y^p}{x^q}\right)_B,
$$
where the subscripts $(\cdot)_A$ and $(\cdot)_B$ indicate the corresponding local coordinate charts.
We decompose the transition maps $\Delta_0^\ve:\{y=-\ve x\}\rightarrow \{y=\ve x\}$ for $x\gtrsim 0$; and $\Delta_\pi^\ve:\{y=-\ve x\}\rightarrow \{y=\ve x\}$ for $x\lesssim 0$, into the local transition maps
$$
\Delta_0^\ve=\sigma_2\circ\tau_1\circ\sigma_1\mbox{ and }
\Delta_\pi^\ve=\sigma_4\circ\tau_2\circ\sigma_3,
$$
see Figure~\ref{f:fig1}. The analysis in local coordinates depends on the parity of $p$ and $q$. For reasons of space, we only will provide a detailed proof for the case where both $p$ and $q$ are odd, and we will comment on the differences for the remaining cases at the end of the proof.

\begin{figure}[h]
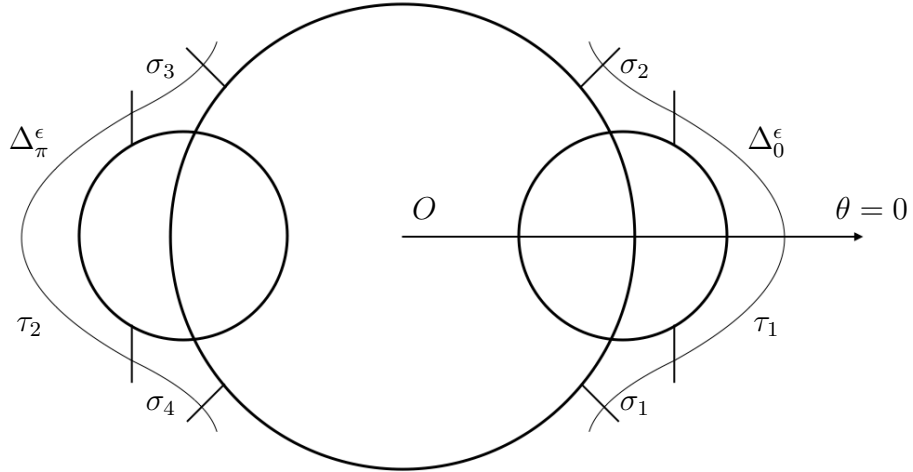


	\centering
	
	\begin{lpic}[l(2mm),r(2mm),t(2mm),b(2mm)]{typeA2(0.35)}
		
		\lbl[c]{350,125; $\theta=0$}
		
		\lbl[c]{310,150; $\Delta_{0}^\epsilon$}
		\lbl[c]{30,150; $\Delta_{\pi}^\epsilon$}

		\lbl[c]{310,80; $\tau_1$}
		\lbl[c]{30,80; $\tau_2$}		
		
	    \lbl[c]{260,50; $\sigma_1$}
		\lbl[c]{80,50; $\sigma_4$}	
		
	    \lbl[c]{260,178; $\sigma_2$}
		\lbl[c]{80,178; $\sigma_3$}
		
		\lbl[c]{180,125; $O$}
		
	\end{lpic}
	
	\caption{Transition maps scheme for  $\mathcal{A}$-type  characteristic directions.}\label{f:fig1}
\end{figure}

Assume that $p$ and $q$ are odd. The transversal sections $\{y = -\ve x\}$ and $\{y = \ve x\}$ correspond to $\{w = -\ve\}$ and $\{w = \ve\}$, respectively, in the local coordinates of system \eqref{e:cordA}. The transition maps $\sigma_1$ and $\sigma_3$ are determined by the flow of \eqref{e:cordA} from $\{w = -\ve\}$ to $\{z = -\delta\}$ where $\ve$ and $\delta$ are positive and small enough; while $\sigma_2$ and $\sigma_4$ arise from the reversed-time flow of \eqref{e:cordA} from $\{z = \delta\}$ to $\{w = \ve\}$. In the local coordinates of system \eqref{e:cordB}, the maps $\tau_1$ and $\tau_2$ are given by the flow of \eqref{e:cordB} from $\{v = -1/\delta\}$ to $\{v = 1/\delta\}$ for $x\gtrsim 0$ and $x\lesssim 0$ respectively; see Figure~\ref{f:fig2}.

\begin{figure}[h]
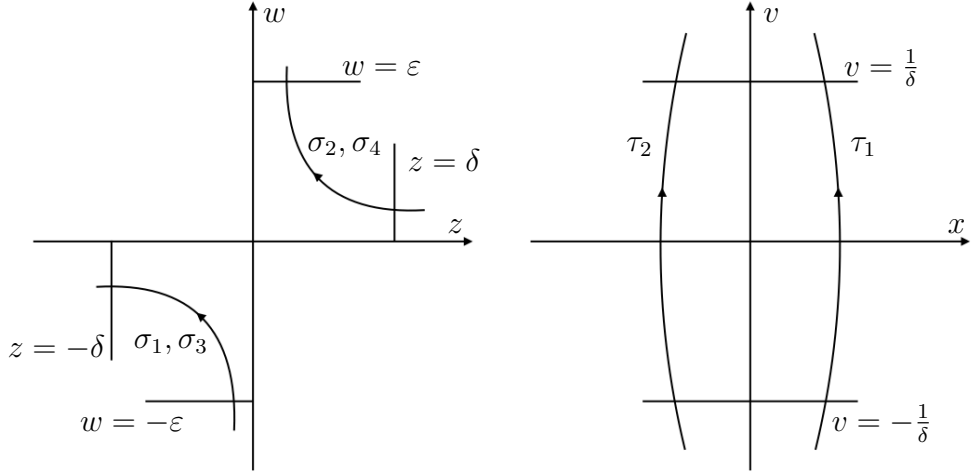

	
	\centering
	
	\begin{lpic}[l(2mm),r(2mm),t(2mm),b(2mm)]{cartalocal(0.35)}
		
\lbl[l]{105,195; $w$}
\lbl[l]{175,115; $z$}

\lbl[r]{75,40; $w=-\ve$}
\lbl[r]{45,69; $z=-\delta$}	

\lbl[l]{135,175; $w=\ve$}
\lbl[l]{160,140; $z=\delta$}	

\lbl[c]{136,145; $\sigma_{2},\sigma_4$}
\lbl[c]{70,70; $\sigma_{1},\sigma_3$}

\lbl[l]{295,195; $v$}
\lbl[l]{365,115; $x$}

\lbl[c]{340,175; $v=\frac{1}{\delta}$}
\lbl[c]{340,40; $v=-\frac{1}{\delta}$}

\lbl[c]{248,145; $\tau_2$}
\lbl[c]{333,145; $\tau_1$}

	\end{lpic}
	
	\caption{Transition maps in local coordinates for $\mathcal{A}$-type characteristic directions in the case $p$ and $q$ odd.}\label{f:fig2}
\end{figure}

Set $$F(\kappa)\,=\,\displaystyle{\int\limits_{0}^{\kappa}\left.\left(\frac{W(z,w)}{w\,
Z(z,w)}\right)\right|_{w=0}\,dz} \mbox{ and }
G(\kappa)\,=\,\displaystyle{\int\limits_{0}^{\kappa}\left.\left(\frac{Z(z,w)}{z\,
W(z,w)}\right)\right|_{z=0}\, dw}.$$

By using Lemma \ref{l:lema8} we obtain that for $i=1,3$:
$$
\sigma_{i}\left((z,-\ve)_A\right)=\left(-\delta,-A_i(\ve,\delta)|z|^\lambda+o(|z|^\lambda)\right)_A
$$ where
$$
A_i(\ve,\delta)=\exp\{F(-\delta)-\lambda G(-\ve)\},
$$ being $\lambda$ the hyperbolicity ratio of the saddle, noting that at this stage the stable and unstable manifolds are $W^s(O) = \{z = 0\}$ and $W^u(O) = \{w = 0\}$, respectively so that $\lambda$ is the one given in \eqref{e:hypratio1}. For, $i=2,4$:
$$
\sigma_{i}\left((\delta,w)_A\right)=\left(A_i(\ve,\delta)w^\frac{1}{\lambda}+o(w^\frac{1}{\lambda}),\ve\right)_A
$$ where
$$
A_i(\ve,\delta)=\exp\left\{G(\ve)-\frac{1}{\lambda}F(\delta)\right\}.
$$
Hence, for $i=1,3$ we obtain
\begin{multline}\label{e:mercuri}
A_{i+1}(\ve,\delta)A_i^{\frac{1}{\lambda}}(\ve,\delta)= \\
=\exp\left\{
\displaystyle{\int\limits_{-\ve}^{\ve}\left.\left(\frac{Z(z,w)}{z
W(z,w)}\right)\right|_{z=0}dw
-\frac{1}{\lambda}\int\limits_{-\delta}^{\delta}\left.\left(\frac{W(z,w)}{w
Z(z,w)}\right)\right|_{w=0}dz}\right\}.
\end{multline}
The regular maps $\tau_1$ and $\tau_2$ are given by:
$$
\tau_i\left(\left(x,-\frac{1}{\delta}\right)_B\right)=
\left(t_i(\delta)\,x+o(x),\frac{1}{\delta}\right)_B
$$
where, by direct integration of the first variational equation of system \eqref{e:cordB},
\begin{equation}\label{e:venus}
t_i(\delta)=\exp\left\{\int_{-1/\delta}^{1/\delta}
\frac{\partial}{\partial x} \left.\left(\frac{X(x,v)}{V(x,v)}\right)\right|_{x=0} dv\right\}.
\end{equation}

In order to compute the transition map $\Delta_{0}^\ve$, set $i,j=1$; and set $i=3$ and $j=2$ to compute $\Delta_{\pi}^\ve$. Set $\nu\in\{1,-1\}$ according to $\theta\in\{0,\pi\}$. To simplify the notation we write $A_i=A_i(\ve,\delta)$ and $t_j=t_j(\delta)$. Also to simplify the notation, and with a slight abuse of notation, we will write only the leading terms of the successive images computed.  Then, the transition maps are:
$$
(x,-\ve x)=\left(-\ve^{-p}|x|^{q-p},-\ve\right)_A
\xrightarrow{\sigma_i}\left(-\delta,-A_i{\ve^{-\lambda p}}{|x|^{\lambda(q-p)}}  \right)_A
$$
$$=\left(\nu\delta^{\frac{1}{q-p}},A_i^{\frac{p}{q-p}}{\ve^{^{-\frac{\lambda p^2}{q-p}}}}{|x|^{\lambda p}} , -\frac{1}{\delta}\right)_B
\xrightarrow{\tau_j}\left(\nu t_j\delta^{\frac{1}{q-p}},A_i^{\frac{p}{q-p}}{\ve^{^{-\frac{\lambda p^2}{q-p}}}}{|x|^{\lambda p}} , \frac{1}{\delta}\right)_B
$$
$$
=\left(\delta, t_j^{\frac{q-p}{p}} A_i {\ve^{^{-\lambda p}}}{|x|^{\lambda (q-p)}} \right)_A\xrightarrow{\sigma_{i+1}}
\left( t_j^{\frac{q-p}{\lambda p}} A_{i+1}A_i^{\frac{1}{\lambda}}{\ve^{^{-p}}} {|x|^{ q-p}} ,\ve\right)_A
$$
$$
=\left( t_j^{\frac{1}{\lambda p}} \left(A_{i+1}A_i^{\frac{1}{\lambda}}\right)^{\frac{1}{q-p}} x,\ve
t_j^{\frac{1}{\lambda p}} \left(A_{i+1}A_i^{\frac{1}{\lambda}}\right)^{\frac{1}{q-p}} x\right).
$$
Hence
$$
D_{0,\pi}^\ve=t_j^{\frac{1}{\lambda p}}(\delta) \left(A_{i+1}(\ve,\delta)A_i^{\frac{1}{\lambda}}(\ve,\delta)\right)^{\frac{1}{q-p}}
$$
for $(i,j)=(1,1)$ and $(i,j)=(3,2)$ respectively. Since $D_{0,\pi}^\ve$ do not depend on $\delta$, and taking into account that the expressions in \eqref{e:mercuri} are continuous functions, we can take $\delta\to 0$. Taking into account \eqref{e:venus}, we obtain:
$$
D_{0,\pi}^{\ve}=\exp\left\{R(\ve)+\frac{1}{\lambda p}\mathrm{PV}\int_{-\infty}^{\infty}
\left.\frac{\partial}{\partial x} \left(\frac{X(x,v)}{V(x,v)}\right)\right|_{x=0} dv\right\}
$$
where
$$
R(\ve)=\frac{1}{q-p} \int\limits_{-\ve}^{\ve}\left.\left(\frac{Z(z,w)}{z
W(z,w)}\right)\right|_{z=0}dw,
$$ which proves statement (c).

The remaining cases, corresponding to statements (a) and (b), are obtained analogously. As we have seen in the preceding case and, in fact, it is  guaranteed by Lemma \ref{l:lema8}(d), the analysis of the terms associated with the hyperbolic saddles is irrelevant, since  $D_{0,\pi}^\ve=\exp\left\{R_1(\ve)+R_2(\delta)\right\}$ where $\lim\limits_{\kappa\to 0}R_i(\kappa)=0$. The difference in these cases is that the coefficients $t_i(\delta)$ of the maps $\tau_i$ vary with respect the one given in \eqref{e:venus}. To compute them, denote $Q_i$ for $i=1,\ldots,4$ the different quadrants in $(x, y)$-coordinates.

In the case where both $p$ and $q$ are even, the regions $Q_1$ and $Q_4$ in $(x, y)$-coordinates (where $\Delta_0^\ve$ is defined) map to the first quadrant in coordinates A, while $Q_2$ and $Q_3$ (where $\Delta_{\pi}^\ve$ is defined) map to the second quadrant. A straightforward calculation, using the first variational equations of system \eqref{e:cordA},
shows that:
\begin{equation}\label{e:ti}
t_i(\delta)=\exp\left\{\int_{1/\delta}^{0} + \int_{0}^{1/\delta}
\left.\frac{\partial}{\partial x} \left(\frac{X(x,v)}{V(x,v)}\right)\right|_{x=0} dv
\right\}=1.
\end{equation}

When $p$ is even and $q$ is odd, the regions $Q_1$ and $Q_4$ in $(x, y)$-coordinates are mapped to the first quadrant in coordinates A via the blow up, but $Q_2$ and $Q_3$ are mapped to the third quadrant. Hence, $t_1(\delta)$ is given by \eqref{e:ti} but
$$
t_2(\delta)=\exp\left\{\int_{-1/\delta}^{0} + \int_{0}^{-1/\delta}
\left.\frac{\partial}{\partial x} \left(\frac{X(x,v)}{V(x,v)}\right)\right|_{x=0} dv
\right\}=1,
$$which proves statement (a).

When $p$ is odd and $q$ is even, then the regions $Q_1$ and $Q_4$ in $(x, y)$-coordinates map to the first and fourth quadrant in coordinates A, hence we can compute $t_1(\delta)$ using \eqref{e:venus}. However,  $Q_2$ and $Q_3$ map to the second and fourth quadrant, respectively. This means that,
$$
t_2(\delta)=\exp\left\{\int_{1/\delta}^{-1/\delta}
\left.\frac{\partial}{\partial x} \left(\frac{X(x,v)}{V(x,v)}\right)\right|_{x=0} dv\right\}=t_1^{-1}(\delta),
$$which completes the proof of statement (b).
\end{proof}

A direct consequence of the above Proposition \ref{p:propotocha} is:

\begin{corollary}\label{c:producte} Under the assumptions of Proposition \ref{p:propotocha}, the following statements hold:
\begin{enumerate}[(a)]
\item If either $p$ or $q$ are even, then
$$\lim\limits_{\ve\to 0} D_\pi^\ve\,D_0^\ve=1.$$
\item  If $p$ and $q$ are odd, then
$$
\lim\limits_{\ve\to 0} D_\pi^\ve\,D_0^\ve=\exp\left\{ \frac{2}{\lambda p} \mathrm{PV} \int_{-\infty}^{\infty}
\left.\frac{\partial}{\partial x} \left(\frac{X(x,v)}{V(x,v)}\right)\right|_{x=0} dv\right\}
$$
\end{enumerate}
\end{corollary}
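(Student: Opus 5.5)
The plan is to multiply the two expressions for $D_0^\ve$ and $D_\pi^\ve$ given by Proposition \ref{p:propotocha}, case by case according to the parities of $p$ and $q$, and then let $\ve\to 0^+$ using $\lim_{\ve\to0^+}R(\ve)=0$. The one point to watch is the notation. Proposition \ref{p:propotocha} writes a single $R(\ve)$, but the quantities coming from the two directions $\theta=0$ and $\theta=\pi$ may differ. I will write $R_0(\ve)$ and $R_\pi(\ve)$ and check that each is continuous and tends to $0$. This follows from the proof: each equals $\frac{1}{q-p}$ times an integral over $[-\ve,\ve]$ of a function that is continuous near $w=0$, which is integrable since $W(0,w)/w$ is regular and nonvanishing at the saddle. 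In cases (a) and (b) the same holds via Lemma \ref{l:lema8}(d).

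If $p$ is even, case (a) of Proposition \ref{p:propotocha} gives $D_0^\ve D_\pi^\ve=\exp\{R_0(\ve)+R_\pi(\ve)\}$, and the limit is $1$.

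If $p$ is odd and $q$ is even, case (b) applies. The PV-integral term enters with $\nu=1$ for $\theta=0$ and $\nu=-1$ for $\theta=\pi$, so in the product the two terms cancel exactly, leaving $\exp\{R_0(\ve)+R_\pi(\ve)\}\to1$. Together with the previous paragraph this proves statement (a).

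If $p$ and $q$ are both odd, case (c) applies, and the two PV terms are identical rather than opposite. Adding the exponents gives
$$D_0^\ve D_\pi^\ve=\exp\Big\{R_0(\ve)+R_\pi(\ve)+\frac{2}{\lambda p}\,\mathrm{PV}\!\int_{-\infty}^{\infty}\frac{\partial}{\partial x}\Big(\frac{X}{V}\Big)\Big|_{x=0}dv\Big\}.$$
Letting $\ve\to0$ yields (b).

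The main subtlety is justifying that the PV integral and the ratio $\lambda$ are the same object for both directions. This holds because a single algebraic blow-up chart covers both $\theta=0$ and $\theta=\pi$ (the case $x\lesssim0$ corresponds to the same systems \eqref{e:cordA}, \eqref{e:cordB}). Consequently $\lambda$ and the integrand coincide, and only the orientation and sign factor recorded by $\nu$ change.
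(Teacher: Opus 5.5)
Your proposal is correct and is essentially the paper's argument, which presents the corollary as a direct consequence of Proposition~\ref{p:propotocha}: you multiply $D_0^\ve$ and $D_\pi^\ve$, note that the PV terms are absent for $p$ even, cancel for $p$ odd and $q$ even (via $\nu=\pm1$), and double for $p,q$ odd, and then let $\ve\to0$. One small correction to your aside: the integrand $\bigl(Z/(zW)\bigr)|_{z=0}$ is not continuous at $w=0$ but has a simple pole there (since $(Z/z)|_{z=0}\to -a\neq 0$ while $W(0,w)\sim bw$), so $R(\ve)\to0$ holds only because the integral over the symmetric interval $[-\ve,\ve]$ is taken as a principal value (equivalently, by Lemma~\ref{l:lema8}(d)); in any case you need none of this, since Proposition~\ref{p:propotocha} already asserts $R(\ve)\to0$ for each direction.
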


The linear terms of the transition maps for characteristic directions in a $\mathcal{B}$-type $2$-tuple are given in the following results:

\begin{proposition}\label{p:propotochb}
Let the origin of an analytic differential system $\dot{x}=P(x,y)$, $\dot{y}=Q(x,y)$, be a monodromic point. Assume that the orbits rotate in a counterclockwise direction.
Let  $\theta_* = 0,\pi$, belong  to a $2$-tuple of $\mathcal{B}$-type characteristic directions computed with $(p,q)$-weighted polar coordinates.  For a fixed $\ve\gtrsim 0$, consider the transition maps of the flow $\Delta_{0,\pi}^\ve:\{y=\mp\ve^\frac{1}{p} |x|^\frac{q-p}{p}\}\rightarrow \{y=\pm \ve^\frac{1}{p} |x|^\frac{q-p}{p}\}$ for $x\gtrsim 0$ and  $x\lesssim 0$, respectively. Then,
$$
\Delta_{0,\pi}^\ve(x,\mp \ve^\frac{1}{p} |x|^\frac{q-p}{p})=(D_{0,\pi}^\ve x+o(x),\pm  \ve^\frac{1}{p}\,\left(D_{0,\pi}^\ve  |x|\right)^\frac{q-p}{p}+o(|x|^\frac{q-p}{p}))
$$ respectively,
where
\begin{enumerate}[(a)]
\item If $p$ is even, then $
D_{0,\pi}^\ve=\exp\left\{R(\ve)\right\}.$
\item If $p$ is odd and $q$ is even, then
$$
D_{0,\pi}^\ve=\exp\left\{R(\ve)+ \frac{\nu}{\lambda_0 p} \mathrm{PV} \int_{-\infty}^{\infty}
\left.\frac{\partial}{\partial x} \left(\frac{X(x,v)}{V(x,v)}\right)\right|_{x=0} dv\right\},
$$where $\nu\in\{1,-1\}$ when $\theta\in\{0,\pi\}$, respectively.
\item If $p$ is odd and $q$ is odd, then
\begin{equation}\label{e:novadzero}
D_{0,\pi}^\ve=\exp\left\{R(\ve)+ \frac{1}{\lambda_0 p} \mathrm{PV} \int_{-\infty}^{\infty}
\left.\frac{\partial}{\partial x} \left(\frac{X(x,v)}{V(x,v)}\right)\right|_{x=0} dv\right\}.
\end{equation}
\end{enumerate}

In the expressions above, $\lambda_0$ has the same form as in Equation \eqref{e:hypratio1} and represents the hyperbolicity ratio of the saddle arising in the blow-up described in Definition \ref{d:typeB} for $\theta_*=0$. In this case, $Z$ and $W$, and additionally $X$ and $V$, denote the components of the differential systems \eqref{e:cordA} and \eqref{e:cordB}, respectively. Finally, $R(\ve)$ is a continuous function satisfying $\lim\limits_{\ve \to 0^+} R(\ve) =~0$.
\end{proposition}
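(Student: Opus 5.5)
The plan is to mirror the proof of Proposition \ref{p:propotocha}, replacing the $\mathcal{A}$-type charts by the $\mathcal{B}$-type charts of Definition \ref{d:typeB}(a). We take $(p,q)=(p_2,q_2)$ and work with
$$
(z,w)_A=\left(\frac{x^{q}}{y^{p}},\frac{y^{p}}{x^{q-p}}\right)_A, \qquad (x,v)_B=\left(x,\frac{y^{p}}{x^{q}}\right)_B .
$$
The transversal curves $\{y=\mp\ve^{1/p}|x|^{(q-p)/p}\}$ are chosen precisely so that they become $\{w=\mp\ve\}$ in chart A when $p$ is odd. In chart A they become $\{w=\ve\}$, traversed on both sides, when $p$ is even. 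We decompose $\Delta_0^\ve=\sigma_2\circ\tau_1\circ\sigma_1$ and $\Delta_\pi^\ve=\sigma_4\circ\tau_2\circ\sigma_3$ exactly as in Figure \ref{f:fig1}. Here $\sigma_i$ are the Dulac transitions of the hyperbolic saddle of \eqref{e:cordA} between $\{w=\pm\ve\}$ and $\{z=\pm\delta\}$, and $\tau_j$ is the regular transition of \eqref{e:cordB} from $\{v=-1/\delta\}$ to $\{v=1/\delta\}$.

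The first step is to invoke Lemma \ref{l:lema8} for the saddle transitions. This gives $\sigma_{1,3}(z,-\ve)=(-\delta,-A_i|z|^{\lambda_0}+\cdots)$ and $\sigma_{2,4}(\delta,w)=(A_iw^{1/\lambda_0}+\cdots,\ve)$. The constants $A_i$ are the same exponentials of $F$ and $G$ as before. Consequently the product $A_{i+1}A_i^{1/\lambda_0}$ equals the exponential of the two integrals in \eqref{e:mercuri}, over $[-\ve,\ve]$ and $[-\delta,\delta]$.

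The second step integrates the first variational equation of \eqref{e:cordB} along $x=0$. This yields $t_j(\delta)$ as in \eqref{e:venus}.

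The third step is to chase a point through the chain of coordinate changes, keeping only leading terms. We use the relations $z=x^{p}/w$ and $v=w/x^{p}$, valid since $x^q/y^p\cdot y^p/x^{q-p}=x^p$. This is the main difference from the $\mathcal{A}$ case. A point $(x,-\ve^{1/p}|x|^{(q-p)/p})$ goes to $z\sim -x^{p}/\ve$. The map $\sigma_1$ then gives $w\sim -A_1\ve^{-\lambda_0}|x|^{\lambda_0 p}$ at $z=-\delta$. In chart B this point sits at $v=-1/\delta$ with $x^{p}=\delta|w|$. After $\tau_1$ the $x$-coordinate is multiplied by $t_1$, so at $z=\delta$ we get $w=t_1^{p}(\ldots)$. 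Finally $\sigma_2$ returns $z\sim t_1^{p/\lambda_0}A_2A_1^{1/\lambda_0}x^p/\ve$, and converting back to $x$ requires a $p$-th root. The exponent of $x$ therefore closes up to $1$, which is the key consistency check. The result is
$$
D^\ve=t_j^{1/(\lambda_0 p)}\big(A_{i+1}A_i^{1/\lambda_0}\big)^{1/p}.
$$
The $y$-component on the exit curve follows automatically, because $w=\ve$ there.

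Since $D^\ve$ is independent of $\delta$, we let $\delta\to0$. The $[-\delta,\delta]$ integral then vanishes and $t_j$ becomes the principal value integral in \eqref{e:novadzero}. We set $R(\ve)=\frac1p\int_{-\ve}^{\ve}\frac{Z}{zW}\big|_{z=0}dw$, which tends to $0$.

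The parity cases (a) and (b) should follow as at the end of the proof of Proposition \ref{p:propotocha}. One tracks to which quadrants of charts A and B the quadrants $Q_1,\dots,Q_4$ are mapped. For $p$ even the $\tau$-integrals run over $[0,1/\delta]$ and back, so they cancel and $t_j=1$. For $p$ odd and $q$ even the $\pi$-side reverses orientation, which gives $t_2=t_1^{-1}$ and the sign $\nu$.

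I expect the main obstacle to be the bookkeeping of signs and branches for $x\lesssim0$, where $|x|^{(q-p)/p}$ and $x^p$ interact with the parities of $p$ and $q-p$. This part must be checked carefully in each parity case to confirm the claimed quadrant correspondences, especially that the exit curve is really $\{w=+\ve\}$ on the $\pi$ side.
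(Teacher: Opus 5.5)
Your plan is the paper's proof. It uses the same charts $(z,w)_A=(x^q/y^p,\,y^p/x^{q-p})$ and $(x,v)_B=(x,\,y^p/x^q)$, the same splitting $\Delta_0^\ve=\sigma_2\circ\tau_1\circ\sigma_1$, $\Delta_\pi^\ve=\sigma_4\circ\tau_2\circ\sigma_3$, Lemma~\ref{l:lema8} for the saddle, the variational equation for $\tau_j$, and a leading-order chase through $zw=x^p$, $v=1/z$.

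\textbf{The gap is the final step of your chase.} The intermediate steps are correct, but the formula you conclude does not follow from them. At the exit section $w=\ve$ your own line gives $z\sim t_1^{p/\lambda_0}A_2A_1^{1/\lambda_0}x^p/\ve$. Hence $x_{\mathrm{out}}^p=zw=t_1^{p/\lambda_0}A_2A_1^{1/\lambda_0}x^p$, and the $p$-th root gives
\[
D^\ve=t_j^{1/\lambda_0}\bigl(A_{i+1}A_i^{1/\lambda_0}\bigr)^{1/p},
\]
not $t_j^{1/(\lambda_0p)}(\cdots)^{1/p}$.

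The structural reason is this. Chart B keeps the original coordinate $x$. Since $x^p=zw$ with one of $z,w$ fixed on each section, the two saddle passages act on $x$ as $x\mapsto c\,x^{\lambda_0}$ and $x\mapsto c'x^{1/\lambda_0}$. A factor $t_j$ inserted between them therefore survives as $t_j^{1/\lambda_0}$. In the $\mathcal{A}$ case one has instead $x^{q-p}=zw^p$, the passages act as $x\mapsto c\,x^{\lambda p}$, and that is where $1/(\lambda p)$ comes from.

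An independent check is that $D^\ve$ must not depend on $\delta$. Changing charts gives $\frac{W}{wZ}\big|_{w=0}=-\frac{p}{z^2}\,g(1/z)-\frac1z$ with $g(v)=\partial_x(X/V)|_{x=0}$. The $\delta$-dependence of $\frac1p\ln\bigl(A_{i+1}A_i^{1/\lambda_0}\bigr)$ is then cancelled by $\frac1{\lambda_0}\ln t_j(\delta)$. It is not cancelled by $\frac1{\lambda_0p}\ln t_j(\delta)$ unless $p=1$.

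\textbf{Consequence for the statement.} Carried out correctly, your argument gives (b) and (c) with $\nu/\lambda_0$ and $1/\lambda_0$ in front of the principal value. This matches the statement only when $p=1$.

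Following the paper does not repair this. The paper's own chase ends with $t_j^{1/\lambda}(A_{i+1}A_i^{1/\lambda})^{1/p}x$. Its next line, $D^\ve_{0,\pi}=t_j^{1/(\lambda p)}(A_{i+1}A_i^{1/\lambda})^{1/(q-p)}$, is the $\mathcal{A}$-type formula of Proposition~\ref{p:propotocha} carried over unchanged. So for $p>1$ the extra factor $1/p$ in (b) and (c) is not supported by the computation. Correspondingly, $\frac{2\mathcal{O}}{\lambda_0p_2^2}$ in Theorem~\ref{t:v2} would become $\frac{2\mathcal{O}}{\lambda_0p_2}$. The discrepancy is invisible in the paper's examples, which all have $p_2=1$.

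\textbf{A smaller bookkeeping point, which you already suspected.} For $p,q$ odd, $q-p$ is even, so for $x\lesssim0$ the entry curve is $\{w=+\ve\}$ and the exit curve is $\{w=-\ve\}$. The passage goes from the second to the fourth quadrant of chart A, as in the paper's $\sigma_3,\sigma_4$, so $\sigma_3$ does not start on $\{w=-\ve\}$. This only reverses the sign of the $\ve$-integral in \eqref{e:mercuri}, i.e.\ it changes $R(\ve)$. The $\tau$-integral still runs from $v=-1/\delta$ to $v=1/\delta$, so $D_\pi^\ve$ carries the same principal-value term as $D_0^\ve$.
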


\begin{proof}
We consider the systems \eqref{e:cordA} and \eqref{e:cordB}, obtained via the blow-ups
$
(z, w)_A = \left({x^q}/{y^p}, {y^p}/{x^{q-p}}\right)_A \quad \text{and} \quad (x, v)_B = \left(x, {y^p}/{x^q}\right)_B.$ Following the scheme of Figure~\ref{f:fig3},
the transition maps $\Delta_{0,\pi}^\ve:\{y=\mp \ve^\frac{1}{p} |x|^\frac{q-p}{p}\}\rightarrow \{y=\pm \ve^\frac{1}{p} |x|^\frac{q-p}{p}\}$ for $x\gtrsim 0$ and $x\lesssim 0$ can be written as:
$
\Delta_0^\ve=\sigma_2\circ\tau_1\circ\sigma_1$ and $
\Delta_\pi^\ve=\sigma_4\circ\tau_2\circ\sigma_3.
$

\begin{figure}[h]

	\centering
	
	\begin{lpic}[l(2mm),r(2mm),t(2mm),b(2mm)]{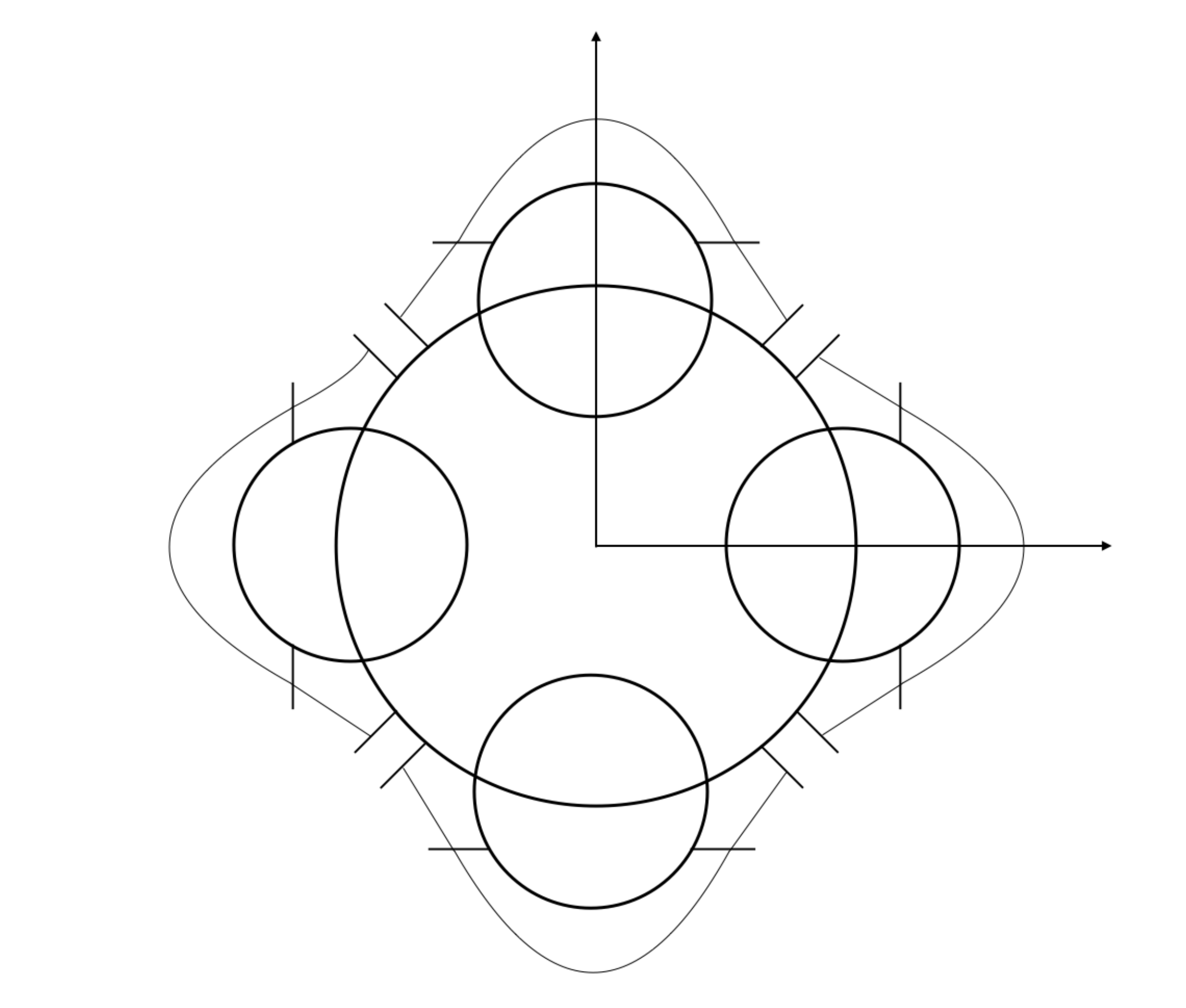(0.35)}
		
		\lbl[c]{390,165; $\theta=0$}
		
		\lbl[c]{350,190; $\Delta_{0}^\epsilon$}
		\lbl[c]{60,190; $\Delta_{\pi}^\epsilon$}

		\lbl[c]{340,120; $\tau_1$}
		\lbl[c]{60,120; $\tau_2$}		
		
	    \lbl[c]{295,90; $\sigma_1$}
		\lbl[c]{110,90; $\sigma_4$}	
		
	    \lbl[c]{295,218; $\sigma_2$}
		\lbl[c]{108,218; $\sigma_3$}

	\lbl[r]{194,330; $\theta=\frac{\pi}{2}$}
		
		\lbl[l]{220,300; $\Delta_{\frac{\pi}{2}}^\epsilon$}
		\lbl[l]{220,10; $\Delta_{\frac{3\pi}{2}}^\epsilon$}

		\lbl[l]{170,303; $\bar{\tau}_1$}
		\lbl[l]{170,13; $\bar{\tau}_2$}		
		
	    \lbl[c]{267,255; $\bar{\sigma}_1$}
		\lbl[c]{267,60; $\bar{\sigma}_4$}	
		
	    \lbl[c]{137,255; $\bar{\sigma}_2$}
		\lbl[c]{137,61; $\bar{\sigma}_3$}

		\lbl[c]{200,150; $O$}

	\end{lpic}
	
	\caption{Transition maps scheme for $\mathcal{B}$-type $2$-tuples of characteristic directions.}\label{f:fig3}
\end{figure}

The analysis in local coordinates depends on the parity of $p$ and $q$. We only provide a detailed proof for the case where both $p$ and $q$ are odd.  In this case, in the local coordinates of system \eqref{e:cordA}:
$
\sigma_1:\{w = -\ve\}\rightarrow \{z = -\delta\}$;  $
\sigma_2:\{z = \delta\}\rightarrow \{w = \ve\}$; $\sigma_3:\{w = \ve\}\rightarrow \{z = -\delta\}
$; $\sigma_4:\{z = \delta\}\rightarrow \{w = -\ve\}$.
where $\ve$ and $\delta$ are positive and small enough, see Figure~\ref{f:fig4}.

\begin{figure}[h]
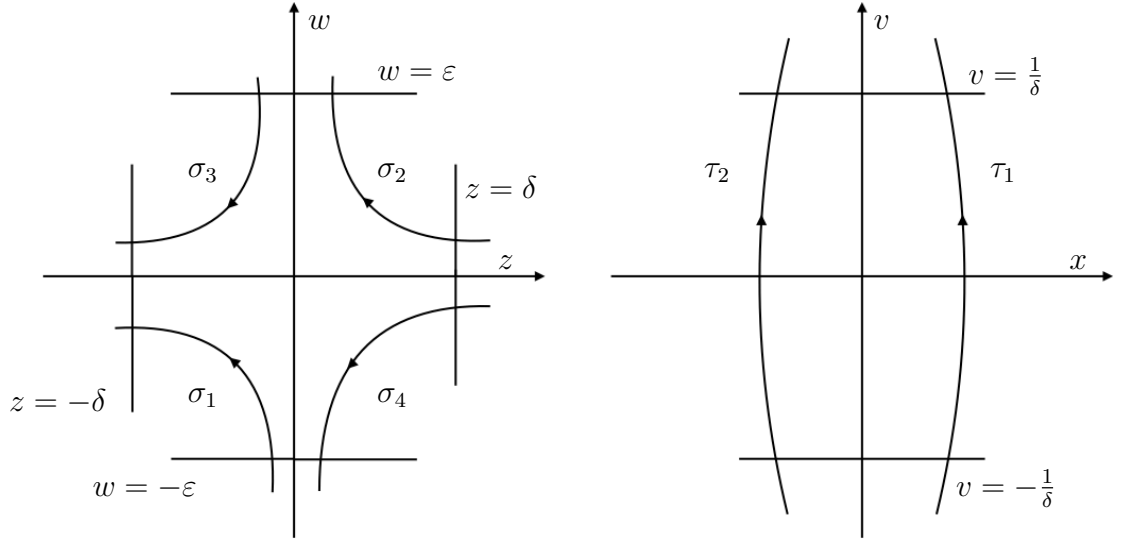

	
	\centering
	
\begin{lpic}[l(2mm),r(2mm),t(2mm),b(2mm)]{cartalocalb(0.40)}
		
\lbl[l]{112,195; $w$}
\lbl[l]{175,115; $z$}

\lbl[r]{75,40; $w=-\ve$}
\lbl[r]{45,69; $z=-\delta$}	

\lbl[l]{135,178; $w=\ve$}
\lbl[l]{164,140; $z=\delta$}	

\lbl[c]{140,145; $\sigma_{2}$}
\lbl[c]{77,70; $\sigma_{1}$}

\lbl[c]{140,70; $\sigma_{4}$}
\lbl[c]{77,145; $\sigma_{3}$}

\lbl[l]{300,195; $v$}
\lbl[l]{365,115; $x$}

\lbl[c]{344,177; $v=\frac{1}{\delta}$}
\lbl[c]{344,40; $v=-\frac{1}{\delta}$}

\lbl[c]{248,145; $\tau_2$}
\lbl[c]{343,145; $\tau_1$}

	\end{lpic}
	
	\caption{Transition maps in local coordinates for $\theta_*\in\{0,\pi\}$ belonging to a $\mathcal{B}$-type $2$-tuple of characteristic directions in the case $p$ and $q$ odd.}\label{f:fig4}
\end{figure}

Proceeding as in the proof of Proposition \ref{p:propotocha} we have that for $i=1,3$:
$$
\sigma_{i}\left((z,\mp\ve)_A\right)=\left(-\delta,-\nu A_i(\ve,\delta)|z|^\lambda+o(|z|^\lambda)\right)_A
$$ where
$\lambda$ is the hyperbolicity ratio of the saddle, and $\nu\in\{1,-1\}$ depending on whether $\theta_*$ is $0$ or $\pi$. Note
that  the stable and unstable manifolds are $W^s(O) = \{z = 0\}$ and $W^u(O) = \{w = 0\}$, respectively, hence the hyperbolicity ratio is given by the same expression as in \eqref{e:hypratio1}. For, $i=2,4$:
$$
\sigma_{i}\left((\delta,w)_A\right)=\left(A_i(\ve,\delta)|w|^\frac{1}{\lambda}+o(|w|^\frac{1}{\lambda}),\pm \ve\right)_A.
$$
In this case, $W^s(O) = \{w = 0\}$ and $W^u(O) = \{z = 0\}$.

Observe that, from Lemma \ref{l:lema8}(d) for $i=1,3$ we obtain $
A_{i+1}(\ve,\delta)A_i^{\frac{1}{\lambda}}(\ve,\delta)=\exp\left(R_1(\ve)+R_2(\delta)\right) $ where $\lim_{\kappa\to 0} R_i(\kappa)=0$.
The regular maps $\tau_1$ and $\tau_2$ are given by:
$
\tau_i\left(\left(x,-\frac{1}{\delta}\right)_B\right)=
\left(t_i(\delta)\,x+o(x),\frac{1}{\delta}\right)_B
$ with $
t_i(\delta)$ given by the same expression that in
\eqref{e:venus}.

To compute the transition map $\Delta_{0}^\ve$, we set $i,j=1$; and to compute $\Delta_{\pi}^\ve$ we set $i=3$ and $j=2$. Also we set $\nu\in\{1,-1\}$ according to $\theta\in\{0,\pi\}$. Again, with an abuse of notation, we will write only the leading terms of the successive images computed.  Then, the transition maps are:
$$
\left(x,-\nu \ve^\frac{1}{p} |x|^\frac{q-p}{p}\right)=\left(-\ve^{-1}|x|^{p},-\nu \ve\right)_A
\xrightarrow{\sigma_i}\left(-\delta,-\nu A_i {\ve^{-\lambda }} {|x|^{\lambda p}} \right)_A
$$
$$=\left(\nu\delta^{\frac{1}{p}}A_i^{\frac{1}{p}}{\ve^{^{-\frac{\lambda }{p}}}}{|x|^{\lambda }} , -\frac{1}{\delta}\right)_B
\xrightarrow{\tau_j}\left(\nu t_j\delta^{\frac{1}{p}}A_i^{\frac{1}{p}}{\ve^{^{-\frac{\lambda }{p}}}}{|x|^{\lambda }} , \frac{1}{\delta}\right)_B
$$
$$
=\left(\delta, \nu t_j^{p} A_i {\ve^{^{-\lambda }}}{|x|^{\lambda p}} \right)_A \xrightarrow{\sigma_{i+1}}
\left( t_j^{\frac{p}{\lambda }} A_{i+1}A_i^{\frac{1}{\lambda}} {\ve^{^{-1}}}{|x|^{ p}} ,\nu\ve\right)_A
$$
$$
=\left( t_j^{\frac{1}{\lambda }} \left(A_{i+1}A_i^{\frac{1}{\lambda}}\right)^{\frac{1}{p}} x,
\nu \ve^\frac{1}{p}\left(t_j^{\frac{1}{\lambda }} \left(A_{i+1}A_i^{\frac{1}{\lambda}}\right)^{\frac{1}{p}}\right)^\frac{q-p}{p} x^\frac{q-p}{p}\right),
$$
where we have used $\nu|x|=x$.
Hence
$
D_{0,\pi}^\ve=t_j^{\frac{1}{\lambda p}}(\delta) \left(A_{i+1}(\ve,\delta)A_i^{\frac{1}{\lambda}}(\ve,\delta)\right)^{\frac{1}{q-p}}
$
for $(i,j)=(1,1)$ and $(i,j)=(3,2)$ respectively.
Proceeding as in Proposition \ref{p:propotocha},  we obtain
that $D_{0,\pi}^{\ve}$ is given by \eqref{e:novadzero}, which proves statement (c).

The remaining cases, corresponding to statements (a) and (b), follow using the same  arguments as in the proof of Proposition \ref{p:propotocha}.
\end{proof}

\begin{proposition}\label{p:propotochb2}
Let the origin of an analytic differential system $\dot{x}=P(x,y)$, $\dot{y}=Q(x,y)$, be a monodromic point. Assume that the orbits rotate in a counterclockwise direction.
Let  $\theta_* = \pi/2, 3\pi/2$ belong  to a $2$-tuple of $\mathcal{B}$-type characteristic directions computed with $(p,q)$-weighted polar coordinates. For a fixed $\ve\gtrsim 0$, consider the transition maps of the flow $\Delta_{\frac{\pi}{2},\frac{3\pi}{2}}^\ve:\{y=\pm\ve^{-\frac{1}{p}} |x|^\frac{q-p}{p}\}\rightarrow \{y=\pm \ve^{-\frac{1}{p}} |x|^\frac{q-p}{p}\}$ for $x\gtrsim 0$ and  $x\lesssim 0$, respectively. Then,
$$
\Delta_{\frac{\pi}{2},\frac{3\pi}{2}}^\ve(x,\pm\ve^{-\frac{1}{p}} |x|^\frac{q-p}{p})=(-D_{\frac{\pi}{2},\frac{3\pi}{2}}^\ve x+o(x),\pm  \ve^{-\frac{1}{p}}\,\left(D_{\frac{\pi}{2},\frac{3\pi}{2}}^\ve  |x|\right)^\frac{q-p}{p}+o(|x|^\frac{q-p}{p}))
$$ respectively,
where
\begin{equation}\label{e:novadpimitjos}
D_{\frac{\pi}{2},\frac{3\pi}{2}}^\ve=\exp\left\{R(\ve)- \frac{p}{\lambda_{\frac{\pi}{2}} } \mathrm{PV} \int_{-\infty}^{\infty}
\left.\frac{\partial}{\partial y} \left(\frac{Y(u,y)}{U(u,y)}\right)\right|_{y=0} du\right\}.
\end{equation}
In the above expression, $\lambda_{\frac{\pi}{2}}$ has the form as in Equation \eqref{e:hypratio1} and represents the hyperbolicity ratio of the saddle arising in the blow-up described in Definition \ref{d:typeB} for $\theta_*=\pi/2$. In this case, $Z$ and $W$, and additionally $U$ and $Y$, denote the components of the differential systems \eqref{e:cordA} and \eqref{e:cordC}, respectively. Again, $R(\ve)$ is a continuous function satisfying $\lim\limits_{\ve \to 0^+} R(\ve) =~0$.
\end{proposition}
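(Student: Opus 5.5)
The proof follows the scheme of Proposition \ref{p:propotochb}, now for the directions $\theta_*=\pi/2,3\pi/2$. Here the blow-ups of Definition \ref{d:typeB}(b) give the saddle chart $(z,w)_A=(y/x,\,x^{q-p}/y^{p})_A$ and the regular chart $(u,y)_C=(x/y,\,y)_C$ of system \eqref{e:cordC}. As in Figure~\ref{f:fig3}, I would decompose
$$\Delta_{\frac{\pi}{2}}^\ve=\bar\sigma_2\circ\bar\tau_1\circ\bar\sigma_1,\qquad \Delta_{\frac{3\pi}{2}}^\ve=\bar\sigma_4\circ\bar\tau_2\circ\bar\sigma_3 .$$
The maps $\bar\sigma_i$ are Dulac maps of the hyperbolic saddle of \eqref{e:cordA} between the sections $\{w=\pm\ve\}$ and $\{z=\pm\delta\}$. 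The curve $y=\pm\ve^{-1/p}|x|^{(q-p)/p}$ is exactly $w=\pm\ve$ up to the sign conventions dictated by the parity of $p,q$. The maps $\bar\tau_j$ are regular transitions of \eqref{e:cordC} from $\{u=-1/\delta\}$ to $\{u=1/\delta\}$, or with reversed orientation, since the counterclockwise flow near $x=0$ crosses from $x>0$ to $x<0$. This reversal is what produces the sign $-D^\ve$ in the first component.

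\textbf{Step 1: the saddle transitions.} Apply Lemma \ref{l:lema8} to get $\bar\sigma_i(\cdot)$ with leading coefficients $A_i(\ve,\delta)$ and exponents $\lambda_{\frac{\pi}{2}}$ or $1/\lambda_{\frac{\pi}{2}}$. By Lemma \ref{l:lema8}(d), the product $A_{i+1}A_i^{1/\lambda_{\pi/2}}=\exp\{R_1(\ve)+R_2(\delta)\}$ with $R_1,R_2\to 0$. So the saddles only contribute to $R(\ve)$.

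\textbf{Step 2: the regular transitions.} Integrate the first variational equation of \eqref{e:cordC} along $y=0$. The direction is transversal because $U(u,0)\neq0$. This gives
$$\bar\tau_j(u,y)=\Bigl(\pm\tfrac1\delta,\ t_j(\delta)\,y+o(y)\Bigr),\qquad t_j(\delta)=\exp\Bigl\{\pm\int_{-1/\delta}^{1/\delta}\partial_y\bigl(Y/U\bigr)\big|_{y=0}\,du\Bigr\}.$$
The sign depends on the orientation in which $u$ is traversed, and I expect it to be negative for counterclockwise rotation.

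\textbf{Step 3: composition.} Chase leading terms through the charts. Start from $x$ on the entry curve, pass to $(z,w)_A$, apply $\bar\sigma_i$, rewrite in $(u,y)_C$ using $u=z$ and $y^{q}=w^{-1}z^{q-p}\cdot(\text{powers})$, apply $\bar\tau_j$, return to chart A, apply $\bar\sigma_{i+1}$, and finally go back to $x$.

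Relating $x$ and $y$ along the exit section, $x$ scales like $y^{p/(q-p)}$ while $y$ is multiplied by $t_j$. Combined with the saddle exponent $1/\lambda_{\pi/2}$, this converts the factor $t_j$ into $t_j^{\,p/\lambda_{\pi/2}}$ on the $x$-coordinate; I expect the exponent bookkeeping to confirm this. That gives the factor $p/\lambda_{\frac{\pi}{2}}$ in \eqref{e:novadpimitjos}. Letting $\delta\to0$ turns $t_j$ into the principal value integral, and $D^\ve$ is independent of $\delta$, exactly as in Proposition \ref{p:propotocha}.

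\textbf{Main obstacle.} The delicate part is the sign and parity bookkeeping. This means tracking which quadrants of $(x,y)$ map to which quadrants of charts A and C, and hence the orientation of the $u$-integration, which yields the minus sign in the exponent and the $-D^\ve$ in the first component. It also means checking that, unlike the $\theta=0$ case, no parity cases survive: the chart $(u,y)=(x/y,y)$ has exponents $1$, so both half-lines $u\to\pm\infty$ are always traversed. I would carry out the case $p,q$ odd in detail first, then verify that the remaining parities change only the section signs and not the formula.
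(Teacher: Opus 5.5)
Your plan is the paper's argument. The decomposition $\Delta^\ve_{\pi/2}=\bar\sigma_2\circ\bar\tau_1\circ\bar\sigma_1$ matches, as does the use of Lemma \ref{l:lema8}(d) to absorb the saddle factors into $R(\ve)$. So does the variational integral of \eqref{e:cordC} along $y=0$ traversed from $u=1/\delta$ to $u=-1/\delta$, which gives both the minus sign in the exponent and the exit with $x<0$.

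The one step that fails as written is Step~3.

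\textbf{The chart change.} It is $u=1/z$, not $u=z$; your own Step~2 sections $u=\pm1/\delta$ are $z=\pm\delta$. From $w=x^{q-p}/y^p$ one gets $y^{q-2p}=w\,z^{q-p}$.

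\textbf{The exponent.} Your heuristic (``$x\sim y^{p/(q-p)}$ on the exit section, combined with $1/\lambda$'') would give $t_j^{p/((q-p)\lambda)}$, not $t_j^{p/\lambda}$, with $\lambda=\lambda_{\frac{\pi}{2}}$. The factor $t_j$ actually propagates as follows:
\begin{itemize}
\item back in chart A at $z=-\delta$, it multiplies $w$ by $t_j^{q-2p}$;
\item the saddle passage $\bar\sigma_{i+1}$ (Lemma \ref{l:lema8}(b)) turns this into $t_j^{(q-2p)/\lambda}$ on $z$;
\item on the exit curve $|z|=\ve^{-1/p}|x|^{(q-2p)/p}$, so $|x|\propto|z|^{p/(q-2p)}$ and the $q-2p$ cancels.
\end{itemize}
The same chain applied to the saddle coefficients gives
$D^\ve=t_j^{p/\lambda}\bigl(A_{i+1}A_i^{1/\lambda}\bigr)^{p/(q-2p)}$, with $t_j=\exp\{\int_{1/\delta}^{-1/\delta}\partial_y(Y/U)|_{y=0}\,du\}$. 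Since $D^\ve$ does not depend on $\delta$, letting $\delta\to0$ yields \eqref{e:novadpimitjos}. This is exactly the paper's computation.
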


\begin{proof}
We consider the systems \eqref{e:cordA} and \eqref{e:cordC}, obtained via the blow-ups
$
(z, w)_A = \left(y/x, {x^{q-p}/y^p}\right)_A \quad \text{and} \quad (u,y)_C = \left(x/y,y\right)_C.$ Using the notation displayed in Figure   \ref{f:fig3},
the transition maps $\Delta_{\frac{\pi}{2},\frac{3\pi}{2}}^\ve:\{y=\pm \ve^{-\frac{1}{p}} |x|^\frac{q-p}{p}\}\rightarrow \{y=\pm \ve^{-\frac{1}{p}} |x|^\frac{q-p}{p}\}$ for $x\gtrsim 0$ and $x\lesssim 0$ respectively, decompose as:
$
\Delta_\frac{\pi}{2}^\ve=\bar{\sigma}_2\circ\bar{\tau}_1\circ\bar{\sigma}_1$ and $
\Delta_\frac{3\pi}{2}^\ve=\bar{\sigma}_4\circ\bar{\tau}_2\circ\bar{\sigma}_3.
$
As in the proof of Proposition \ref{p:propotochb}, the analysis of the transition maps $\bar{\sigma}_i$, in local coordinates, depends on the parity of $p$ and $q$. The main difference in this case is that, in local coordinates, the analysis of the maps  $\bar{\tau}_i$ do not depend on the parity of $p$ and $q$.

As in the proof of the previous results, we only provide a detailed proof for the case where both $p$ and $q$ are odd.  In this case, in the local coordinates of system \eqref{e:cordA}:
$
\bar{\sigma}_1:\{w = \ve\}\rightarrow \{z = \delta\}$;  $
\bar{\sigma}_2:\{z = -\delta\}\rightarrow \{w = \ve\}$; $\bar{\sigma}_3:\{w = -\ve\}\rightarrow \{z = \delta\}
$; $\bar{\sigma}_4:\{z = -\delta\}\rightarrow \{w = -\ve\}$, being $\ve$ and $\delta$ are positive and small enough, see Figure~\ref{f:fig5}. Again, we have that for $i=1,3$:
$
\sigma_{i}\left((z,\pm\ve)_A\right)=\left(\delta,\nu A_i(\ve,\delta)|z|^\lambda+o(|z|^\lambda)\right)_A
$ where
$\lambda$ is the hyperbolicity ratio of the saddle, and $\nu\in\{-1,1\}$ according to $\theta_*\in\{\frac{\pi}{2},\frac{3\pi}{2}\}$. The hyperbolicity ratio $\lambda_{\frac{\pi}{2},\frac{3\pi}{2}}$ has the same expression as in \eqref{e:hypratio1}. For, $i=2,4$:
$
\sigma_{i}\left((-\delta,w)_A\right)=(-A_i(\ve,\delta)|w|^\frac{1}{\lambda}+o(|w|^\frac{1}{\lambda}),\nu \ve)_A.
$

\begin{figure}[h]
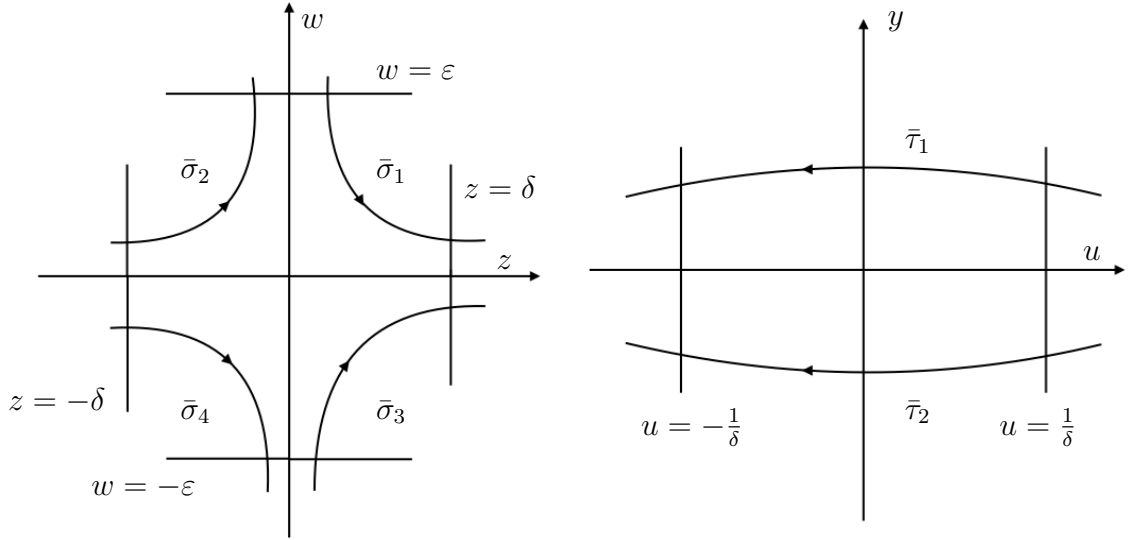

	
	\centering
	
	\begin{lpic}[l(2mm),r(2mm),t(2mm),b(2mm)]{locc(0.40)}
		
\lbl[l]{95,180; $w$}
\lbl[l]{160,100; $z$}

\lbl[r]{60,25; $w=-\ve$}
\lbl[r]{30,54; $z=-\delta$}	

\lbl[l]{120,163; $w=\ve$}
\lbl[l]{149,125; $z=\delta$}	

\lbl[c]{125,130; $\bar{\sigma}_{1}$}
\lbl[c]{60,50; $\bar{\sigma}_{4}$}

\lbl[c]{125,50; $\bar{\sigma}_{3}$}
\lbl[c]{60,130; $\bar{\sigma}_{2}$}

\lbl[l]{290,180; $y$}
\lbl[l]{355,103; $u$}

\lbl[c]{225,45; $u=-\frac{1}{\delta}$}
\lbl[c]{340,45; $u=\frac{1}{\delta}$}

\lbl[c]{300,140; $\bar{\tau}_1$}
\lbl[c]{300,50; $\bar{\tau}_2$}

	\end{lpic}
	
	\caption{Transition maps in local coordinates for $\theta_*\in\{\frac{\pi}{2},\frac{3\pi}{2}\}$ belonging to a $\mathcal{B}$-type $2$-tuple of characteristic directions in the case $p$ and $q$ odd. While the transition maps in the 
	$(z,w)$-coordinates associated with systems \eqref{e:cordA} depend on the parity of $p$ and~$q$, the transition maps in the $(u,y)$-coordinates associated with systems \eqref{e:cordC} are independent on the parity of $p$ and~$q$.}\label{f:fig5}
\end{figure}

The regular maps $\tau_1$ and $\tau_2$ are given by:
$
\tau_i\left(\left(x,\frac{1}{\delta}\right)_C\right)=
\left(t_i(\delta)\,x+o(x),-\frac{1}{\delta}\right)_C
$ with $
t_i(\delta)$ given by
\begin{equation}\label{e:novati}
t_i(\delta)=\exp\left\{ \int_{1/\delta}^{-1/\delta}
\left.\frac{\partial}{\partial y} \left(\frac{Y(u,y)}{U(u,y)}\right)\right|_{y=0} du\right\}.
\end{equation}

As in the proof of Proposition \ref{p:propotochb}, to compute $\Delta_{\frac{\pi}{2}}^\ve$, we set $i,j=1$; and to compute $\Delta_{\frac{3\pi}{2}}^\ve$, we set $i=3,j=2$. Also we set $\nu\in\{1,-1\}$ according to $\theta_*\in\{{\pi}/{2},{3\pi}/{2}\}$.   Then, the transition maps are:
$$
\left(x,\nu \ve^{-\frac{1}{p}} |x|^\frac{q-p}{p}\right)=\left(\ve^{-\frac{1}{p}}|x|^\frac{q-2p}{p},\nu \ve\right)_A
\xrightarrow{\bar{\sigma}_i}\left(\delta,\nu \ve^{-\frac{\lambda}{p}}A_i  {|x|^{\lambda\frac{q-2p}{p}}} \right)_A
$$
$$=\left(\frac{1}{\delta},\nu\delta^{\frac{q-p}{q-2p}}
\ve^{^{-\frac{\lambda }{(q-2p)p}}}
A_i^{\frac{1}{q-2p}}{{|x|^{\frac{\lambda}{p} }} }\right)_C
\xrightarrow{\bar{\tau}_j}\left(-\frac{1}{\delta},t_j\nu\delta^{\frac{q-p}{q-2p}}
\ve^{^{-\frac{\lambda }{(q-2p)p}}}
A_i^{\frac{1}{q-2p}}{{|x|^{\frac{\lambda}{p} }}} \right)_C
$$
$$
=\left(-\delta, \nu t_j^{q-2p} {\ve^{^{-\frac{\lambda}{p} }}}A_i {|x|^{\lambda \frac{q-2p}{p}}} \right)_A \xrightarrow{\bar{\sigma}_{i+1}}
\left( \nu t_j^{\frac{q-2p}{\lambda }}  {\ve^{^\frac{-1}{p}}}A_{i+1}A_i^{\frac{1}{\lambda}}{|x|^{ \frac{q-2p}{p}}} ,\nu\ve\right)_A
$$
$$
=\left(-t_j^{\frac{p}{\lambda }} \left(A_{i+1}A_i^{\frac{1}{\lambda}}\right)^\frac{p}{q-2p}x ,\nu \ve^{-\frac{1}{p}}t_j^{\frac{q-p}{\lambda }} \left(A_{i+1}A_i^{\frac{1}{\lambda}}\right)^\frac{q-p}{q-2p}|x|^{\frac{q-p}{p}}\right),
$$
where only the leading terms of the successive images are written. In summary,
$
D_{\frac{\pi}{2},\frac{3\pi}{2}}^\ve=t_j(\delta)^{\frac{p}{\lambda }} \left(A_{i+1}(\ve,\delta)A_i(\ve,\delta)^{\frac{1}{\lambda}}\right)^\frac{p}{q-2p}
$
for $(i,j)=(1,1)$ and $(i,j)=(3,2)$ respectively.
As in the previous proofs,  using  Lemma \ref{l:lema8}(d), we obtain that $D_{\frac{\pi}{2},\frac{3\pi}{2}}^\ve$ is given by Equation \eqref{e:novadpimitjos}. When either $p$ or $q$ are even we obtain the same result.
\end{proof}

\begin{corollary}\label{c:producteb}
Consider the assumptions of Proposition \ref{p:propotochb} and \ref{p:propotochb2}.
\begin{enumerate}[(a)]
\item If either $p$ or $q$ are even, then
$\lim\limits_{\ve\to 0} D_\pi^\ve\,D_0^\ve=1.$
If $p$ and $q$ are odd, then
$$
\lim\limits_{\ve\to 0} D_\pi^\ve\,D_0^\ve=\exp\left\{ \frac{2}{\lambda_0 p} \mathrm{PV} \int_{-\infty}^{\infty}
\left.\frac{\partial}{\partial x} \left(\frac{X(x,v)}{V(x,v)}\right)\right|_{x=0} dv\right\}.
$$
\item For every natural $p$ and $q$: $$
\lim\limits_{\ve\to 0} D_{\frac{3\pi}{2}}^\ve\,D_{\frac{\pi}{2}}^\ve=
\exp\left\{- \frac{2p}{\lambda_{\frac{\pi}{2}} } \mathrm{PV} \int_{-\infty}^{\infty}
\left.\frac{\partial}{\partial y} \left(\frac{Y(u,y)}{U(u,y)}\right)\right|_{y=0} du\right\}.
$$
\end{enumerate}
\end{corollary}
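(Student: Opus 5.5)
The proof is a direct multiplication of the transition coefficients computed in Propositions \ref{p:propotochb} and \ref{p:propotochb2}, followed by the limit $\ve\to 0^+$. We treat part (a) (directions $0,\pi$) and part (b) (directions $\pi/2,3\pi/2$) separately.

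\emph{Part (a).} We use Proposition \ref{p:propotochb} and write $D_0^\ve=\exp\{R_0(\ve)+\nu_0 I\}$ and $D_\pi^\ve=\exp\{R_\pi(\ve)+\nu_\pi I\}$. Here
$$I=\frac{1}{\lambda_0 p}\mathrm{PV}\int_{-\infty}^{\infty}\left.\frac{\partial}{\partial x}\left(\frac{X(x,v)}{V(x,v)}\right)\right|_{x=0}dv,$$
and $R_0,R_\pi$ are continuous with limit $0$ as $\ve\to0^+$. The signs $\nu_0,\nu_\pi$ are fixed by the parity cases:
\begin{itemize}
\item If $p$ is even, case (a) of the proposition gives no integral term, so $\nu_0=\nu_\pi=0$.
\item If $p$ is odd and $q$ is even, case (b) gives $\nu_0=1$ and $\nu_\pi=-1$.
\item If $p$ and $q$ are both odd, case (c) gives $\nu_0=\nu_\pi=1$.
\end{itemize}
Multiplying gives $D_\pi^\ve D_0^\ve=\exp\{R_0(\ve)+R_\pi(\ve)+(\nu_0+\nu_\pi)I\}$. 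The sum $\nu_0+\nu_\pi$ is $0$ in the first two cases and $2$ in the third. Letting $\ve\to0^+$, so that $R_0+R_\pi\to0$, gives $1$ when $p$ or $q$ is even and $\exp\{2I\}$ when both are odd, which is the claimed formula.

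\emph{Part (b).} We use Proposition \ref{p:propotochb2}, which gives the same expression \eqref{e:novadpimitjos} for both $\Delta^\ve_{\pi/2}$ and $\Delta^\ve_{3\pi/2}$, regardless of parity. The reason is that the regular charts \eqref{e:cordC} do not depend on the parity of $p$ and $q$. So $D_{3\pi/2}^\ve D_{\pi/2}^\ve=\exp\{R_1(\ve)+R_2(\ve)-2pJ/\lambda_{\pi/2}\}$, where $J$ is the principal value integral of $\partial_y(Y/U)|_{y=0}$. Letting $\ve\to0^+$ gives the stated limit.

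\emph{Main obstacle.} The only delicate point is checking that the two directions in each pair carry the same principal value integral, with the same $\lambda$ and the same sign, up to the parity signs above. This matters because the $\delta$-dependent factors $t_j(\delta)$ must combine correctly before $\delta\to0$, producing either cancellation or doubling. For part (a) the signs are exactly those recorded in the quadrant analysis of the proof of Proposition \ref{p:propotocha}. For part (b), $t_1(\delta)$ and $t_2(\delta)$ are given by the same expression \eqref{e:novati}, so no sign cancellation can occur and the terms add. Both facts can be read off directly from the stated propositions.
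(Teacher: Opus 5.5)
Your proposal is correct and follows the paper. The paper states the corollary as an immediate consequence of Propositions \ref{p:propotochb} and \ref{p:propotochb2}: multiply the two coefficients, observe that the parity signs $\nu$ cancel or double (part (b) does not depend on parity), and let $\ve\to0^+$. Your concern about combining the $t_j(\delta)$ before taking $\delta\to0$ is unnecessary, since each $D^\ve$ in those propositions is already independent of $\delta$.
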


To prove the above results we will use the following straightforward generalization of Lemma 8 in \cite{Ga-Li-Ma-Ma} (we omit its proof):

\begin{lemma}\label{l:lema8}
Consider system
\begin{equation}\label{e:sistomega}
\begin{array}{ccl}
\dot{x} &=& -x(a+f(x,y))\,=\,P(x,y),\\
\dot{y} &=& y(b+g(x,y))\,=\,Q(x,y),
\end{array}
\end{equation}
where  $f$ and $g$ begin with first order terms and $a$ and $b$ are
positive.
\begin{enumerate}[(a)]
\item Let $s(y)$ be the transition map of the
flow from $\{x=\ve\}$ to $\{y=\delta\}$ being $\ve$ and $\delta$
positive and small enough,  then the leading term of the map is given by $$
s(y)=A(\ve,\delta)\, y^{\lambda}\, +\, o(y^{\lambda}),
\quad \mbox{with}\quad A(\ve,\delta)=
\frac{\exp\{F(\delta)\}}{\exp\{\lambda G(\ve)\}},$$ where $\lambda=a/b$ is the hyperbolicity ratio and
$$F(\delta)\,=\,\displaystyle{\int\limits_{0}^{\delta}\left.\left(\frac{P(x,y)}{x\,
Q(x,y)}\right)\right|_{x=0}\,dy}\mbox{ and }
G(\ve)\,=\,\displaystyle{\int\limits_{0}^{\ve}\left.\left(\frac{Q(x,y)}{y\,
P(x,y)}\right)\right|_{y=0}\, dx}.$$
\item[(b)] Let $\bar{s}$ be the transition map from $\{y=\delta\}$  to $\{x=\ve\}$ associated with system \eqref{e:sistomega} after a time reversion. Then
$$
\bar{s}(x)=\bar{A}(\ve,\delta)\, x^{\frac{1}{\lambda}}\, +\, o(x^\frac{1}{\lambda}),
\quad \mbox{with}\quad \bar{A}(\ve,\delta)=
\frac{\exp\{G(\ve)\}}{\exp\{\frac{1}{\lambda} F(\delta)\}},$$

\item[(c)] Let $A_{Q_i}(\ve,\delta)$ be the coefficient of the leading terms of the transition maps of the saddle \eqref{e:sistomega} associated with each quadrant $Q_i$. Then, the leading terms of the transition maps for $i = 2, 3, 4$, corresponding to the second, third, and fourth quadrants (that is, from $\{x = -\ve\}$ to $\{y = \delta\}$; from $\{x = -\ve\}$ to $\{y = -\delta\}$; and from $\{x = \ve\}$ to $\{y = -\delta\}$, respectively), are obtained by substituting the appropriate values of $\pm \ve$ and/or $\pm \delta$ into the expression for $A(\ve, \delta)$ given above.

\item[(d)] For any $i\neq j$ we have $\bar{A}_{Q_j}(\ve,\delta)A_{Q_i}^{\frac{1}{\lambda}}(\ve,\delta)=\exp\left(R_1(\ve)+R_2(\delta)\right)$, where
$\lim\limits_{\ve\to 0} R_1(\ve)=\lim\limits_{\delta\to 0} R_2(\delta)=0$. Of course, $\bar{A}_{Q_i}(\ve,\delta)A_{Q_i}^{\frac{1}{\lambda}}(\ve,\delta)=1$.
\end{enumerate}
\end{lemma}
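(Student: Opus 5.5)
The statement immediately above is Lemma \ref{l:lema8}, the generalization of Lemma 8 in \cite{Ga-Li-Ma-Ma}. I would prove it by integrating the orbit equation of the saddle \eqref{e:sistomega} explicitly up to leading order.

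\emph{Step 1: normalize the dependent variable.} Along orbits in the open quadrant $x,y>0$ we have
$$
\frac{d\ln y}{dx}=\frac{Q}{yP},\qquad \frac{Q}{yP}=-\frac{b+g}{x(a+f)}.
$$
Write
$$
\frac{Q(x,y)}{y\,P(x,y)}=-\frac{1}{\lambda x}+\frac{\phi(x,y)}{x},
$$
where $\phi$ is analytic and $\phi(0,0)=0$. For item (a) I would integrate from $x=\ve$ backwards to the exit point near the $y$-axis. Orbits starting at $(\ve,y)$ with $y\to0$ stay close to the separatrices $\{y=0\}$ and $\{x=0\}$. So I would use the standard trick: introduce $u=\ln y+\frac{1}{\lambda}\ln x$ (equivalently $y\,x^{1/\lambda}$), and compare with the unperturbed flow along each separatrix.

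\emph{Step 2: split the passage at the corner.} Take a curve such as $\{y=x^{\kappa}\}$, or simply the level where $x\approx y^\lambda$, and split the passage there. On the first leg the orbit is uniformly close to $\{y=0\}$. The correction integral
$$
\int \phi(x,y)\,\frac{dx}{x}
$$
then converges to
$$
\int_0^\ve \Bigl(\frac{Q}{yP}\Big|_{y=0}+\frac{1}{\lambda x}\Bigr)dx ,
$$
which I read as the regularized $G(\ve)$. On the second leg the same holds with the roles of the variables swapped, now using $dx/dy=P/Q$ close to $\{x=0\}$. That yields $F(\delta)$, with the normalization by $\lambda$ producing the factor $\exp\{F(\delta)\}/\exp\{\lambda G(\ve)\}$ and the exponent $y^\lambda$.

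\emph{Main obstacle.} The hardest point is the uniform control of the error near the corner. The contribution of $\phi$ there has to vanish as $y\to0$, and with only analytic $f,g$ (no normal form) the time spent near the corner grows like $|\ln y|$. I would handle it by bounding $|\phi|\le C(x+y)$ along the orbit, where $x$ decays and $y$ grows exponentially in time. The error integral is then dominated by a convergent geometric sum independent of $y$, and dominated convergence gives the limits. As an alternative, I would invoke a $C^1$ linearization of the hyperbolic saddle (Hartman) only to justify the asymptotic form $Ay^\lambda+o(y^\lambda)$. The coefficient would still come from the integrals above, since these are exactly the invariant expressions for $A$ in the linear case $f=g=0$, where $F=G=0$ after regularization and $A=\delta\,\ve^{-\lambda}$ after restoring the endpoint factors.

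\emph{Remaining items.} Item (b) is the inverse of (a) under time reversal: inverting $y\mapsto Ay^\lambda$ gives exponent $1/\lambda$ and coefficient $A^{-1/\lambda}=e^{G(\ve)}e^{-F(\delta)/\lambda}$. Item (c) follows by repeating the computation with the sign-adapted sections, because $F$ and $G$ are defined as integrals from $0$ to $\pm\delta$ and $\pm\ve$. For item (d), the product $\bar A_{Q_j}A_{Q_i}^{1/\lambda}$ equals $\exp\{G(\ve_j)-G(\ve_i)+\frac1\lambda(F(\delta_i)-F(\delta_j))\}$. Since $F$ and $G$ are continuous and vanish at $0$, this is of the form $\exp(R_1(\ve)+R_2(\delta))$ with $R_1,R_2\to0$, and it equals $1$ when $i=j$.
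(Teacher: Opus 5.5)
The paper gives no proof of this lemma (it defers to Lemma 8 of \cite{Ga-Li-Ma-Ma}). The analytic core of your plan is the standard argument and is sound. Along the orbit $\frac{d}{dt}(\ln x+\lambda\ln y)=-f+\lambda g$, and the bound $|{-f}+\lambda g|\le C(x+y)$, with $x$ decaying and $y$ growing exponentially in time, keeps the integral uniformly bounded in $y$. Dominated convergence then splits its limit into one integral along $\{y=0\}$ and one along $\{x=0\}$. Item (b) by inversion is also fine.

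The gap is in identifying the constant, which is the real content of (a). As written, $F$ and $G$ diverge, because $\frac{P}{xQ}\big|_{x=0}=-\frac{\lambda}{y}+O(1)$ and $\frac{Q}{yP}\big|_{y=0}=-\frac{1}{\lambda x}+O(1)$. You remove the poles, so $F=G=0$ when $f=g=0$. But then the displayed formula $A=e^{F(\delta)}/e^{\lambda G(\ve)}$ gives $A=1$ for the linear saddle. Moreover, the endpoint factor you propose to restore, $\delta\,\ve^{-\lambda}$, is wrong. For $\dot x=-ax$, $\dot y=by$ the quantity $x\,y^{\lambda}$ is constant, so the orbit through $(\ve,y)$ meets $\{y=\delta\}$ at $x=\ve\,\delta^{-\lambda}y^{\lambda}$; that is, $A=\ve\,\delta^{-\lambda}$.

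What your time-integral argument actually proves is
\[
A(\ve,\delta)=\ve\,\delta^{-\lambda}\exp\Bigl\{\int_0^{\delta}\Bigl(\frac{P}{xQ}\Big|_{x=0}+\frac{\lambda}{y}\Bigr)dy-\lambda\int_0^{\ve}\Bigl(\frac{Q}{yP}\Big|_{y=0}+\frac{1}{\lambda x}\Bigr)dx\Bigr\}.
\]
Both integrands here are bounded, since they equal $\frac{-f+\lambda g}{y(b+g)}\big|_{x=0}$ and $\frac{f-\lambda g}{\lambda x(a+f)}\big|_{y=0}$.

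The lemma's formula agrees with this exactly when $F$ and $G$ are read as finite-part integrals that keep the logarithms:
\[
F(\delta)=-\lambda\ln|\delta|+\int_0^\delta\Bigl(\frac{P}{xQ}\Big|_{x=0}+\frac{\lambda}{y}\Bigr)dy,\qquad G(\ve)=-\frac{1}{\lambda}\ln|\ve|+\int_0^\ve\Bigl(\frac{Q}{yP}\Big|_{y=0}+\frac{1}{\lambda x}\Bigr)dx .
\]
You need to state this convention explicitly; it is also what makes the sign substitutions in (c) meaningful.

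With this convention, your argument for (d) also needs adjusting, because $F$ and $G$ no longer vanish at $0$. Their logarithmic parts depend only on $|\ve|$ and $|\delta|$, so they cancel in $\bar A_{Q_j}A_{Q_i}^{1/\lambda}$. What remains are differences such as $G(\ve)-G(-\ve)=\mathrm{PV}\int_{-\ve}^{\ve}\frac{Q}{yP}\big|_{y=0}\,dx$, which tend to $0$. This is exactly the form used in \eqref{e:mercuri}.
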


\subsection{Proof of Theorem \ref{t:v1}}

\begin{proof}
If $\theta_i$ is an $\mathcal{A}$-type characteristic direction with weights $(p,q)$, then the transition maps $\Delta_{\theta_i,\theta_i+\pi}^\ve$
from the section $\{y=\tan(\theta_i-\bar{\ve}) x\}$ to
$\{y=\tan(\theta_i+\bar{\ve}) x\}$, where $\bar{\ve}=\arctan(\ve)$, can be computed
via Proposition \ref{p:propotocha} by using the corresponding systems \eqref{e:sistemesi}  obtained taking a rotation of angle $-\theta_i$. This yields to a transition map $\Delta_{0,\pi}^{\theta_i,\ve}(x,-\ve x)=(D_{0,\pi}^{\theta_i,\ve} x+o(x),\ve D_{0,\pi}^{\theta_i,\ve} x+o(x))$. A straightforward computation gives that
 the first component of this map, written in polar coordinates  $(\rho,\theta)$, is
$
\tilde{\Delta}_{0,\pi}^{\theta_i,\ve}(\rho)=D_{0,\pi}^{\theta_i,\ve} \rho+o(\rho).
$
Hence,
$$
\tilde{\Delta}_{\theta_i,\theta_i+\pi}^\ve(\rho)=D_{\theta_i,\theta_i+\pi}^\ve \rho+o(\rho) \mbox{ where } D_{\theta_i,\theta_i+\pi}^\ve=D_{0,\pi}^{\theta_i,\ve}.
$$
From Corollary \ref{c:producte} we have:
$$
\lim\limits_{\ve\to 0} D_{\theta_i+\pi}^\ve D_{\theta_i}^\ve=
\exp\left\{ \frac{2}{\lambda_i p_i} \mathrm{PV} \int_{-\infty}^{\infty}
\left.\frac{\partial}{\partial x} \left(\frac{X_i(x,v)}{V_i(x,v)}\right)\right|_{x=0} dv\right\}
$$ 
if the weights associated with the blow-up of the direction $p_i$ and $q_i$ are odd; and $\lim\limits_{\ve\to 0} D_{\theta_i+\pi}^\ve D_{\theta_i}^\ve=1$ otherwise.

Without loss of generality, assume that $\{\theta=0\}$ is not a
characteristic direction, so we can take $\{x\gtrsim 0\}$ as transversal section or, in polar coordinates, $\{\rho\gtrsim 0;\theta=0\}$.

Let $\ve>0$ be small enough such that
$\{(\theta_j-\bar{\ve},\theta_j+\bar{\ve})\}_{j\in\{1,\cdots,n\}}$, where $\bar{\ve}=\arctan(\ve)$, is a
collection of disjoint intervals. Set
$$S_\ve=\bigcup\limits_{j=1}^{n}\left((\theta_j-\bar{\ve},\theta_j+\bar{\ve})\cup (\theta_j+\pi-\bar{\ve},\right. \theta_j \left.+\pi+\bar{\ve})\right)\mbox{ and } I_\ve=[0,2\pi)\setminus S_{\ve}.$$

Consider the polar equation associted with the initial differential system
$$
\frac{d\rho}{d\theta}=\mathcal{F}(\theta)\rho+o(\rho).
$$
Then the return map $\Pi(\rho)$ decomposes as:
$$
\Pi(\rho)=T_{2n+1}^\ve\circ\tilde{\Delta}_{\theta_n+\pi}^\ve\circ T_{2n}^\ve\circ\cdots\circ\tilde{\Delta}_{\theta_1+\pi}^\ve\circ T_{n+1}^\ve\circ
\tilde{\Delta}_{\theta_n}^\ve\circ T_{n}^\ve\circ\cdots\circ
T_{2}^\ve\circ
\tilde{\Delta}_{\theta_1}^\ve\circ T_{1}^\ve(\rho),
$$
where $T_1^\ve$ denotes the regular transition map from
$\{\theta=0\}$ to $\{\theta=\theta_1-\bar{\ve}\}$;  $T_j^\ve$ denotes the
regular transition from $\{\theta=\theta_{j-1}+\bar{\ve}\}$ to
$\{\theta=\theta_j-\bar{\ve}\}$; and $T_{2n+1}^\ve$ the transition map from $\{\theta=\theta_{n}+\pi+\bar{\ve}\}$ to $\{\theta=2\pi\}$.
A computation shows that for all $i=1,\ldots 2n+1$, the maps $T_j^\ve$ have the form
$$ T_j^\ve(\rho)\, =\,t_j(\ve)\,\rho \,+\,
o(\rho)$$ where $$ t_1^\ve=
\exp\left\{\int_{0}^{\theta_{1}-\bar{\ve}}
\mathcal{F}(\theta) d\theta \right\},\,  t_j^\ve=
\exp\left\{\int_{\theta_{j-1}+\bar{\ve}}^{\theta_{j}-\bar{\ve}}
\mathcal{F}(\theta) d\theta \right\} $$	and  $$
t_{2n+1}^\ve=
\exp\left\{\int_{\theta_{n}+\bar{\ve}}^{2\pi}
\mathcal{F}(\theta) d\theta \right\}.$$

Since $\Pi(\rho)$ is a composition of  maps with non-vanishing linear leading terms, we can write
$\Pi(\rho) = \eta\, \rho+o(x),$ where

\begin{equation}\label{e:final}
\eta\, =\, \left(\prod_{j=1}^{n} D_{\theta_j+\pi}^{{\ve}}D_{\theta_j}^{{\ve}} \right)\cdot \exp
\left\{\int_{I_\ve} \mathcal{F}(\theta) d\theta
\right\}.
\end{equation}

The terms depending on ${\ve}$ in the integrals appearing in
each $D_{\theta_j+\pi}^{\ve}D_{\theta_j}^{\ve}$ (given in Proposition \ref{p:propotocha}) are
non--singular. Therefore, taking $\ve\rightarrow 0$ in
equation (\ref{e:final}), and using Corollary \ref{c:producte} we obtain equation \eqref{e:tv1}.
\end{proof}

\subsection{Proof of Theorem \ref{t:v2}}

\begin{proof}
Let $\{0,\pi/2\}$ be a $\mathcal{B}$-type tuple of characteristic directions appearing using weighted polar coordinates $x=\rho^{m}\cos\theta$ and $y=\rho^{n}\sin\theta$ with $m=p_2$ and $n=q_2-p_2$. Let $\theta_j \in \{0,\pi/2,\pi,3\pi/2\}$, $j=1,2,3,4$, denote the characteristic directions. Now we establish the relations between the transversal sections in Cartesian and weighted polar coordinates just taking into account that a constant angle $\theta = \theta^\dag$ is written as $y^m/x^n = \sin^m\theta^\dag / \sin^n\theta^\dag$.

We begin with the transversal sections corresponding to the characteristic directions $\theta_j$, for $j = 1,3$.  Given $\ve> 0$ sufficiently small, let $\tilde{\ve} > 0$ be the smallest positive solution of the equation $\sin^m(\tilde{\ve}) = \ve \cos^{n} (\tilde{\ve})$. Then, using the above relation and applying the trigonometric angle addition formulas, we easily obtain that, for $j=1,3$, the transversal sections $\{\theta = \theta_j \pm \tilde{\ve}\}$, in weighted polar coordinates, coincide with the local transversal sections near the origin contained in the corresponding branches of the curves $\{ y = \pm \ve^{1/m} |x|^{n/m} \}$ taking a counterclockwise sense of the rotation flow.

For the transversal sections corresponding to $\theta_j$, for $j=2,4$, we proceed similarly.
Given the value $\tilde{\ve}>0$ defined above, let $\bar{\ve}>0$ be the smallest positive solution of $\bar{\ve} \cos^m(\tilde{\ve}) = \sin^{n} (\tilde{\ve})$. Again, using the above relation and the trigonometric angle addition formulas we straightforwardly obtain that for $j=2,4$, the sections $\{\theta = \theta_j \pm \tilde{\ve}\}$ give rise to the local transversal sections near the origin lying in appropriate branches of the curves  $\{ y = \pm \bar{\ve}^{-1/m} |x|^{n/m} \}$. Observe that, as $\tilde{\ve} \to 0$, we have $\ve \to 0$ and $\bar{\ve}\to 0$.

With this notation and assuming the flow rotates counterclockwise, the transition maps $\Delta_{0,\pi}^\ve(x, \mp \ve^{\frac{1}{m}}|x|^\frac{n}{m})$, as stated in Proposition \ref{p:propotochb}, correspond with the transition maps, in weighted polar coordinates,
$\tilde{\Delta}_{0}^{\tilde{\ve}}:\{\theta=-\tilde{\ve} \} \to \{\theta= \tilde{\ve}\}$ and $\tilde{\Delta}_{\pi}^{\tilde{\ve}}:\{\theta=\pi-\tilde{\ve}\} \to \{\theta=\pi+\tilde{\ve}\}$.  In the same way, the transition maps $\Delta_{\pi/2,3\pi/2}^{\bar{\ve}}(x,\pm \bar{\ve}^{-\frac{1}{m}}|x|^\frac{n}{m})$,  defined in Proposition \ref{p:propotochb2}, correspond with the transition maps, in weighted polar coordinates,
$\tilde{\Delta}_{\pi/2}^{\tilde{\ve}} : \{\theta= \pi/2-\tilde{\ve}\} \to \{\theta = \pi/2+\tilde{\ve}\}$ and $\tilde{\Delta}_{3\pi/2}^{\tilde{\ve}}:\{\theta = 3\pi/2-\tilde{\ve} \} \to \{\theta = 3\pi/2 + \tilde{\ve} \}$.

We claim that
$$
\tilde{\Delta}_{0,\pi}^{\tilde{\ve}}(\rho)=\left(D_{0,\pi}^{\ve}\right)^\frac{1}{m} \rho+o(\rho)
\mbox{ and }
\tilde{\Delta}_{\frac{\pi}{2},\frac{3\pi}{2}}^{\tilde{\ve}}(\rho) = \left(D_{\frac{\pi}{2},\frac{3\pi}{2}}^{\bar{\ve}}\right)^\frac{1}{m} \rho+o(\rho).
$$
We only prove the claim for $\tilde{\Delta}_{0}^{\tilde{\ve}}$, the other cases follow analogously. The                                                          first component of the map $\Delta_{0}^\ve$ is $\Delta_{0}^\ve(x,- \ve^{\frac{1}{m}}|x|^\frac{n}{m})_1 = D_0^\ve\,x+o(x)$ and, by definition $x=\rho^m\cos(-\tilde{\ve})=\rho^m\cos(\tilde{\ve})$ so that
$$
D_0^\ve \, x + o(x)=\left(\tilde{\Delta}_{0}^{\tilde{\ve}}(\rho) \right)^m \cos(\tilde{\ve}).
$$
Hence
$$
D_0^\ve \, \rho^m \cos(\tilde{\ve}) + o(\rho^m) = \left(\tilde{\Delta}_{0}^{\tilde{\ve}}(\rho) \right)^m \cos(\tilde{\ve}),
$$
and therefore $\tilde{\Delta}_{0}^{\tilde{\ve}}(\rho)= \left(D_0^\ve\right)^\frac{1}{m}\,\rho+o(\rho)$, proving the claim.

\medskip

We take the transversal section $\{\theta=\bar{\theta}\}$, with $\bar{\theta}\in(0,\pi/2)$.
Then, the return map $\Pi(\rho)$ decomposes as:
$$
\Pi(\rho)=T_{5}^{\tilde{\ve}}\circ\tilde{\Delta}_{0}^{\tilde{\ve}}
\circ T_{4}^{\tilde{\ve}} \circ \tilde{\Delta}_{\frac{3\pi}{2}}^{\tilde{\ve}}
\circ T_{3}^{\tilde{\ve}} \circ
\tilde{\Delta}_{\pi}^{\tilde{\ve}} \circ T_{2}^{\tilde{\ve}} \circ
\tilde{\Delta}_{\frac{\pi}{2}}^{\tilde{\ve}}\circ T_{1}^{\tilde{\ve}}(\rho),
$$
where $T_{1}^{\tilde{\ve}}$ denotes the regular transition map from
$\{\theta=\bar{\theta}\}$ to $\{\theta=\pi/2-\tilde{\ve}\}$; and the rest of maps $T_{j}^{\tilde{\ve}}$ are the obvious regular transition maps connecting the transition maps $\tilde\Delta^{\tilde{\ve}}_{\theta_j}$,  in polar coordinates.  These maps have the form $T_j^{\tilde{\ve}}(\rho)\, =\,t_j({\tilde{\ve}})\,\rho \,+\, o(\rho)$, where the coefficients $t_j$ can be obtained integrating the first variation equations of the $(m,n)$-weighted polar differential equation $\frac{d\rho}{d\theta}=\mathcal{F}(\theta)\rho+o(\rho)$ associated with the initial differential system as done in the the proof of Theorem~\ref{t:v1}.

Using the above decomposition, one gets $\Pi(\rho) = \eta\, \rho+o(\rho)$ with
\begin{align}
\eta\, &=\, \left(D_{0}^{\ve}\,D_{\frac{\pi}{2}}^{\bar{\ve}}
\,D_{{\pi}}^{\ve}\,D_{\frac{3\pi}{2}}^{\bar{\ve}}\right)^\frac{1}{m}
\,t_5(\tilde{\ve})\cdots t_1(\tilde{\ve})\label{e:final2}\\
&=\, \left(D_{0}^{\ve}\,D_{\frac{\pi}{2}}^{\bar{\ve}}
\,D_{{\pi}}^{\ve}\,D_{\frac{3\pi}{2}}^{\bar{\ve}}\right)^\frac{1}{m}
\cdot \exp
\left\{\int_{I_{\tilde{\ve}}} \mathcal{F}(\theta) d\theta
\right\}, \nonumber
\end{align}
where the last integral is computed over $I_{\tilde{\ve}}=[0,2\pi)\setminus S_{\tilde{\ve}}$ with
$$
S_{\tilde{\ve}} = \left(-\tilde{\ve}, \tilde{\ve} \right) \cup \left(\frac{\pi}{2}- \tilde{\ve}, \frac{\pi}{2} + \tilde{\ve} \right)\cup\left(\pi- \tilde{\ve}, \pi + \tilde{\ve} \right) \cup \left(\frac{3\pi}{2}-\tilde{\ve},\frac{3\pi}{2}+ \tilde{\ve} \right).
$$
By Corollary \ref{c:producteb}, letting $\tilde{\ve} \to 0$ (and thus both $\ve$ and $\bar{\ve}$ tend to zero) in equation \eqref{e:final2}, we obtain the expression for $\eta$ in \eqref{e:tv2}.
\end{proof}

\section{Proofs on the extensions of particular minimal models} \label{S-Proofs3}

\subsection{Proof of Proposition \ref{prop-1221}}

\begin{proof}
An straightforward verification shows that the first extension $\mathcal{X}^{[1]}$ of the minimal model  $\mathcal{X}$ given by \eqref{Ex-1} is just the family \eqref{Ex-1-ext}. We notice that the family \eqref{Ex-1-ext} breaks the time-reversible symmetry with respect to both axis but it keeps the set $\Omega_{11} =\{0,\pi/2\}$ of characteristic directions in polar coordinates. Moreover, in polar coordinates
$$
\dot{\theta}=(D-B)	\cos^2\theta\sin^2\theta+o(\rho).
$$
Hence, to ensure that the orbits turn around in counterclockwise direction we fix $D-B>0$. As explained below, we will restrict ourselves to the parameter values of \eqref{Ex-1-ext} that maintain the monodromic nature of the origin.

We claim that in the parameter space $\hat{\Lambda}$ both $\mathcal{X}$ and $\mathcal{X}^{[1]}$ share the blow-up desingularization geometry. In details, we take the characteristic direction $\theta_1^* = 0$ of \eqref{Ex-1-ext} and we do the blow-up $(x, y) \mapsto (z_2,w)$ with $z_2=x^{2}/y$ and $w = y/x$. We remove the common factor $z_2^2 w^3$ and we obtain the polynomial differential system $\dot{z}_2 = z_2[ 2B-D + o(1)]$, $\dot{w} = w[D-B+o(1)]$,
 Since $(2B-D) (D-B) < 0$ on $\Lambda^*$ it follows that the origin now becomes a hyperbolic saddle.

Next, we perform the blow-up  $(x,y) \mapsto (x, w_2)$ with $w_2 = y/x^2$ and a time-rescaling obtained by removing the factor $x^3$ and a reversion of time in order to preserve the fixed sense of rotation. Then \eqref{Ex-1-ext} is transformed into the polynomial differential system $\dot{x} = X(x, w_2)$, $\dot{w}_2 = W_2(x, w_2) = P_2(w_2) + x Q_2(x, w_2)$  where $Q_2$ is a polynomial and where $\dot{w}_2|_{x=0} =  P_2(w_2) := -C +(2 a_2-b_2) w_2 + (2B-D) w_2^2 $, that has no real roots and it is positive since  $(2 B-D)>0$ and provided that $\Delta_1 := (2 a_2-b_2)^2 + 4 C (2 B-D) < 0$. This gives us a regular part of the flow.

Associated to the characteristic direction $\theta_2^* = \pi/2$ of \eqref{Ex-1-ext} we can either perform the rotation of $-\pi/2$ and do the procedure explained in Remark \ref{remark-rotation-A} or just do equivalently first the blow-up $(x, y) \mapsto (z, w_1)$ with $z = x/y$ and $w_1=y^{2}/x$. After a time-rescaling that removes the common factor $w_1^{2} z^{3}$ and ensure the assumed sense of rotation, we obtain the polynomial differential system $\dot{z} = z[D-B+o(1)]$,
$\dot{w}_1 = w_1[B-2D+o(1)]$.  Again, the fact that $(D-B)(B-2D)< 0$ on $\Lambda^*$ guarantees that the origin is a hyperbolic saddle singularity.

Finally, we perform the blow-up $(x,y) \mapsto (z_1, y)$ with $z_1 = x/y^2$.  After a pertinent rescalling to remove the common factor $y^3$ and ensures the sense of rotation we obtain the system $\dot{z}_1 = Z_1(z_1, y) = P_1(z_1) + y Q_1(z_1, y)$, $\dot{y} = Y(z_1, y)$, with $\dot{z}_1|_{y=0} = P_1(z_1) = -A + (2 b_1-a_1 ) z_1 + (2D-B) z_1^2$ that has no real roots provided its discriminant $\Delta_2 := (2 b_1-a_1 ))^2 + 4 A (2 D-B) < 0$. This gives another regular part of the flow.

The former computations prove the claim and, in particular, they establishes the monodromy of the origin of \eqref{Ex-1-ext} on $\hat{\Lambda}$.

\medskip

Observe that the characteristic directions $\theta_* = 0$ and $\theta_* = \pi/2$ are of type $\mathcal{A}$.  Moreover, since
$W(\mathbf{N}(\mathcal{X}))=\{(2,1),(1,2)\}$, it follows that $\mathcal{A}_* = \emptyset$. Therefore, according to Theorem \ref{t:v1}, the linear term $\eta$ of the Poincaré map is given by:
$$
\displaystyle{\eta=\exp\left\{\mathrm{PV}\int_0^{2\pi} \mathcal{F}(\theta)d\theta  \right\}=\exp\left\{\mathrm{PV} \int_0^{2 \pi} \frac{B \cot\theta + D \tan\theta}{D-B} d \theta \right\}= 1.}
$$
where $\mathcal{F}(\theta)$ is the leading term of the polar coordinates equation associated with \eqref{Ex-1-ext}, $dr/d\theta=\mathcal{F}(\theta)r+o(r)$.

\medskip

Finally, we consider the second extension $\mathcal{X}^{[2]}$, by adding to $\mathcal{X}^{[1]}$ arbitrary higher order terms with respect to $\mathbf{N}(\mathcal{X})$, that is, $\mathcal{X}^{[2]} = \mathcal{X}^{[1]} + \mathcal{X}_*$ where $\mathcal{X}_* = \left(\sum_{(i, j) \in S} a_{i,j-1} x^i y^{j-1} \right) \partial_x + \left(\sum_{(i, j) \in S} b_{i-1,j} x^{i-1} y^{j} \right) \partial_y$ being $(i,j) \in S$ with $S = \{ (i,j) \in \mathbb{N}^2 : 2 i + (j - 1) > 2 + 3, i + 2 (j - 1) > 1 + 3 \}$.  Therefore, we obtain that $\mathcal{X}^{[2]}$ is given by  \eqref{Ex-1-ext-HOT}.

According to \eqref{The-set-S-HOT} and recalling that $W(\mathbf{N}(\mathcal{X})) = \{(2, 1), (1,2) \}$ and $r_1 = r_2 = 3$ by \eqref{The-rk}.
In this case, it can be verified that adding the terms associated with $\mathcal{X}_*$ does not affect the $\mathcal{A}$-type nature of the characteristic direction, and therefore we remain under the scope of Theorem~\ref{t:v1}. It can also be verified that these terms do not affect the expression of the corresponding function $\mathcal{F}(\theta)$. This proves the last part of the proposition.
\end{proof}

\subsection{Proof of Proposition \ref{p:novapropTh2}}

\begin{proof}
An easy verification shows that the vector field \eqref{Toy1-XD} is the first extension of the minimal model \eqref{Ex-2}, and that the origin keeps the unique characteristic direction $\theta_*=0$ in polar coordinates if and only if the additional condition $\delta_1 := (a_1 - b_1)^2 + 4 A (D -B) < 0$ holds. If we, in addition, fix $D-B>0$ then the orbits turn around in counterclockwise direction.

The origin of $\mathcal{X}$ and $\mathcal{X}^{[1]}$ share the desingularization sequence of blow-ups. More specifically, doing the blow-up $(x, y) \mapsto (w, z_2)$ with $w = y/x$ and $z_2=x^{2}/y$ and remove the common factor by time-rescaling to obtain a polynomial differential system $\dot{z}_2 = z_2 [ (2 B-D)+o(1)]$, $\dot{w} =  w[(D-B)+ o(1)]$ having a hyperbolic saddle at the origin with hyperbolic ratio $\lambda_1 = (2 B - D)/(B - D) > 0$ on $\Lambda$.
\medskip

The blow-up $(x,y) \mapsto (w_2, x)$ with $w_2 = y/x^{2}$ and the corresponding time-rescaling we get a polynomial differential system $\dot{x} = X(x, w_2)$,
$\dot{w}_2 = W_2(x, w_2)$   such that $\dot{w}_2|_{x=0} = -C+  (2 a_2-b_2) w + (2B-D)w^2$ has no real roots under the discriminant condition $\Delta_1 < 0$.

The second extension is
$\mathcal{X}^{[2]} = \mathcal{X}^{[1]} + \mathcal{X}_*$, where $\mathcal{X}_* = \left(\sum_{(i, j) \in S} a_{i,j-1} x^i y^{j-1} \right) \partial_x + \left(\sum_{(i, j) \in S} b_{i-1,j} x^{i-1} y^{j} \right) \partial_y$ being $(i,j) \in S \subset \mathbb{N}^2$ if and only if $(i, j)$ lies in the upper half plane with respect to $\mathbf{N}(\mathcal{X})$. In other words, $S = \{ (i,j) \in \mathbb{N}^2 : i + (j - 1) > 1 + 2, i + 2 (j - 1) > 1 + 3 \}$. We emphasize that the addition of higher order terms with respect to $\mathbf{N}(\mathcal{X})$ does not affect the characteristic directions at the origin. Regarding the blow-up processes for the computation of the saddle and the regular part, we have verified that the addition of the terms defining $\mathcal{X}^{[1]}$ and $\mathcal{X}^{[2]}$ does not affect the $\mathcal{A}$-type nature of the characteristic direction.

The expression for the linear term of the return map $\Pi$ follows directly from Theorem \ref{t:v1}, taking into account $\mathcal{A}^* = \emptyset$  is empty because $\theta_*=0$ is an $\mathcal{A}$-type characteristic direction only when the weights $(p,q)=(1,2)\in W(\mathbf{N}(\mathcal{X}))$ are used, and that  for both $\mathcal{X}^{[1]}$ and $\mathcal{X}^{[2]}$ we get using polar coordinates:
$$
 \mathcal{F}(\theta)
=
\frac{ 2( b_1 +   \cot\theta (A + D +  \cot\theta (a_1 +  B \cot\theta))) \sin^2\theta }{ D-A - B  + (A - B +  D) \cos(2 \theta) + (b_1-a_1 ) \sin^2 \theta)}.
$$
\end{proof}

\subsection{Proof of Proposition \ref{p:propoexttipusb}}

\begin{proof}
Let $\mathcal{X}$ be the vector field associated to the minimal model \eqref{Toy-3}. Its first extension $\mathcal{X}^{[1]}$ is just family \eqref{Toy-3-ext}. In order to apply the type $\mathcal{B}$ desingularization scheme and to compute the linear part associated with the Poincar\'e map by using Theorem \ref{t:v2}, we  take $(p_2, q_2)=(1,3)\in W(\mathbf{N}(\mathcal{X}))$, and we consider the weighed polar coordinates $x = \rho^{p_2} \cos\theta=\rho \cos\theta$ and $y = \rho^{q_2-p_2} \sin\theta= \rho^{2} \sin\theta$. In these $(1,2)$-weighted polar coordinates there appear two characteristic directions $\Omega_{12} =\{0,\pi/2\}$ and
$$
\dot{\theta}=\frac{(D-2B)\cos^2\theta\sin^2\theta}{2-\cos^2\theta}+o(\rho)
$$
To ensure the counter-clockwise rotation sense we fix $D-2B>0$.

We claim that for $\mathcal{X}^{[1]}$, and therefore for $\mathcal{X}$, these characteristic directions are of type  $\mathcal{B}$, when we take the restricted monodromic parameter space $\hat{\Lambda}$. More precisely, to desingularize $\theta_*=0$, we perform the following blow-ups to the first extension  \eqref{Toy-3-ext}: $(x,y) \mapsto (z, w)$ with $z =x^3/y$ and $w=y/x^2$, with the appropriate time-rescaling removing the common factor $z^3 w^4$ transforming  \eqref{Toy-3-ext} into a polynomial vector field
$\dot{z} = z[3 B-D  + o(1)]$, $\dot{w} = w[D-2B  + o(1)]$,  so that $(3 B-D) (D-2B)< 0$ when we restrict to $\Lambda^\ddag$ yielding a hyperbolic saddle at the origin with hyperbolic ratio $\lambda_0=(2B-D)/(3B-D)$.

Now we do the second blow-up $(x,y) \mapsto (x, v)$ with $v = y/x^3$ and remove the common factor $x^4$ to get a polynomial differential system $\dot{x} = x(a_2+Bv+x^2X_2(x,v))$, $\dot{v} =  C +(b_2-3 a_2) v + (D- 3 B) v^2+x^2V_2(x,y)$,  where both $X_2$ and $V_2$ are polynomials. We observe that $\dot{v}|_{x=0}$
has no real roots provided that $\Delta_1 := (b_2-3 a_2)^2 + 4 C (3 B - D) < 0$. Hence $\{x=0\}$ is a regular solution. Since $D-2B>0$ we have $3B-D<0$ and therefore it implies that $C>0$.

To desingularize $\theta_* =\pi/2$ we perform the blow up $(x,y) \mapsto (z,w)$ with $z= y/x$, $w = x^2/y$ and the time-rescaling multiplying by $z^3w^2$, which gives the polynomial system $\dot{z}=z(D-B+o(1))$ and $\dot{w}=w(2B-D+o(1))$, thus having a hyperbolic saddle at the origin  if $(D-B)(2B-D)<0$, with hyperbolic ratio $\lambda_{\frac{\pi}{2}}=(D-2B)/(D-B)$.

 Finally, the blow up $(x,y) \mapsto (u,y)$ with $u=x/y$ and removing the factor $y^2$, which gives a regularized system $\dot{u}= A+(a_1-b_1)u+(B-D)u^2+y^2U_2(u,y)$, $\dot{y}=y(b_1+Du+y^2Y_2(u,y))$, i.e. such that $y=0$ is invariant and has no singular points inside when $\Delta_2=(a_1-b_1)^2+4A(D-B) < 0$. Observe that since $2B-D<0$ we have that $D-B>0$ and therefore $A<0$.

\medskip

We have seen that the characteristic directions of both $\mathcal{X}$ and $\mathcal{X}^{[1]}$ are $\mathcal{B}$-type characteristic directions, so we can apply Theorem \ref{t:v2} to compute the linear term~$\eta$. To do this, we observe that the polar coordinates associated equation is
$$
\frac{d\rho}{d\theta}= \mathcal{F}(\theta) +o(\rho) =\frac{B \cot\theta + D \tan\theta }{D-2B} +o(\rho),
$$ hence $\mathrm{PV} \int_0^{2\pi} \mathcal{F}(\theta)d\theta=0.$ We also observe that
$$
\frac{\partial}{\partial x} \left.\left(\frac{X(x,v)}{V(x,v)}\right)\right|_{x=0}=\frac{Bv+a_2}{(D-3B)v^2+(b_2-3a_2)v+C}
$$ and $\lambda_0=(2B-D)/(3B-D)$; and that
$$\frac{\partial}{\partial y}\left.\left(\frac{Y(u,y)}{U(u,y)}\right)\right|_{y=0}=\frac{Du+b_1}{(B-D)u^2+(a_1-b_1)u+A}
$$and $\lambda_{\frac{\pi}{2}}=(D-2B)/(D-B)$.
A direct application of Theorem \ref{t:v2} gives that for $\mathcal{X}^{[1]}$ the coefficient $\eta$ is given by \eqref{eta-Toy-3-ext}.

By adding arbitrary higher order terms with respect to $\mathbf{N}(\mathcal{X})$, we obtain $\mathcal{X}^{[2]} = \mathcal{X}^{[1]} + \mathcal{X}_*$ where $\mathcal{X}_* = \left(\sum_{(i, j) \in S} a_{i,j-1} x^i y^{j-1} \right) \partial_x $ $+ \left(\sum_{(i, j) \in S} b_{i-1,j} x^{i-1} y^{j} \right) \partial_y$ being $(i,j) \in S$. According with \eqref{The-set-S-HOT} and taking into account the values $(p_1, q_1)=(1,1)$, $(p_2, q_2)=(1,3)$ and $r_1=2$ and $r_2=4$ from \eqref{The-rk}, we obtain the set $S = \{ (i,j) \in \mathbb{N}^2 : i + j > 4, i + 3 j  > 8\}$. A computation shows that for $\mathcal{X}^{[2]}$, $\{0,\pi/2\}$ is also a $\mathcal{B}$-type 2-tuple of characteristic directions. It can also be verified that these terms do not affect the computation of $\eta$.
\end{proof}

\subsection{Proof of Proposition \ref{Prop-manyosa-Famili-I-ext}}

\begin{proof}
Let $\mathcal{X}$ be the vector field of the minimal model \eqref{toy-manyosa-Famili-I}. The most general extended family $\mathcal{X}^{[1]}$ with $\mathcal{X} \subset \mathcal{X}^{[1]}$, $\mathbf{N}(\mathcal{X}^{[1]}) = \mathbf{N}(\mathcal{X})$ and $\mathcal{X}^{[1]} = (\mathcal{X}^{[1]})_\Delta$ is \eqref{manyosa-ext}. The geometry of the desingularizing blow-ups of the characteristic direction $\theta_*=0$ of $\mathcal{X}$ and $\mathcal{X}^{[1]}$ coincide since \eqref{rest-exp-regularpart-ext} holds in all the monomials of \eqref{manyosa-ext}. This characteristic direction is computed using polar coordinates and we recall that $(1,1) \in W(\mathbf{N}(\mathcal{X}))$. Notice that family \eqref{manyosa-ext} also contains \eqref{ejemplo-manyosa} and they coincide when $c_1= d_1= -d_2 = 1$.
\medskip

The polar vector field associated to \eqref{manyosa-ext} has the angular component $\dot{\theta}= G_0(\theta) + O(\rho)$ with $G_0(\theta) = \sin^2\theta \mathcal{P}(\theta)$ with $\mathcal{P}(\theta) = 4 \cos^2\theta + (d_1- c_1) \sin\theta \cos\theta + \sin^2\theta$. The monodromic necessary condition $G_0(\theta) \geq 0$ holds if and only if $\delta_1(c_1, d_1) = -16 + (c_1 - d_1)^2 \leq 0$ and there is only one characteristic direction $\theta_* = 0$ when $\delta_1 < 0$, the case that we are analyzing. As we will see, $0$ is a $\mathcal{A}$-type characteristic direction for the origin of $\mathcal{X}^{[1]}$.

We perform the blow-ups associated to the characteristic direction $\theta_* = 0$ to system \eqref{manyosa-ext} as we did in the proof of Theorem \ref{t:Teo-main1} for $\mathcal{A}$-type directions.

We take $(x, y) \mapsto (z_2,w)$ with  $z_2=x^{3}/y$  and $w = y/x$ with the corresponding time-rescaling. It produces the hyperbolic saddle $\dot{z}_2 = z_2[4  + o(1)]$,
$\dot{w} = w[-4  + o(1)]$. Hence its hyperbolic ratio is $\lambda=1$. Now we perform the change $(x, y) \mapsto (x, w_2)$ with $w_2 = y/x^3$ and, denoting the resulting system $\dot{x} = X(x, w_2),$ $\dot{w}_2 = W_2(x, w_2)$, we obtain a regular solution at $\{x=0\}$. A computation gives:
$$
\int_{-\infty}^{\infty} \frac{\partial}{\partial x} \left. \left(\frac{X(x, w)}{W_2(x, w)}\right) \right|_{x=0} dw = \int_{-\infty}^{\infty} \frac{-a}{-2 + (3 a - d_2) w - 4 w^2} dw$$
$$
= 2 a \sqrt{\Delta_1^{-1}(a, d_2)} \, \pi,
$$
provided that the discriminant $\Delta_1(a, d_2) := 32 - (d_2 -3 a)^2 > 0$. Notice that we have obtained the natural extension $\Delta_1(a, -1) = \Delta(a)$ of system \eqref{ejemplo-manyosa}.

\medskip

Now we integrate the variational part associated to the regular part. First we write the polar system associated to \eqref{manyosa-ext}, that is, $\dot{\theta} = G_0(\theta) + O(\rho)$, $\dot{\rho} = R_1(\theta) \rho + O(\rho^2)$ with $R_1(\theta) = \sin^2\theta \mathcal{Q}(\theta)$ and $\mathcal{Q}(\theta) = c_1 \cos^2\theta + 3 \sin\theta \cos\theta + d_1 \sin^2\theta$. The fact that $\mathcal{F}(\theta):=R_1(\theta)/G_0(\theta)=\mathcal{Q}(\theta)/\mathcal{P}(\theta)$ where $\mathcal{P}$ has no real roots because of $\delta_1 < 0$ guarantee that the following integral is non-singular and takes the value
$$
\mathrm{PV} \int_0^{2 \pi} \mathcal{F}(\theta) d \theta = \int_0^{2 \pi} \frac{\mathcal{Q}(\theta)}{\mathcal{P}(\theta)} d \theta =  \frac{2(c_1+d_1) \pi}{\sqrt{-\delta_1}}.
$$
From these computations and using Theorem \ref{t:v1} we obtain the explicit expression \eqref{eta-Toy1-XD_ManyosaFamilyI} of $\eta$ for $\mathcal{X}^{[1]}$.
\newline

Now we add to $\mathcal{X}^{[1]}$ arbitrary higher order terms with respect to $\mathbf{N}(\mathcal{X})$, that is, we consider $\mathcal{X}^{[2]} = \mathcal{X}^{[1]} + \mathcal{X}_*$ where $\mathcal{X}_* = \left(\sum_{(i, j) \in S} a_{i,j-1} x^i y^{j-1} \right) \partial_x + \left(\sum_{(i, j) \in S} b_{i-1,j} x^{i-1} y^{j} \right) \partial_y$ being $(i,j) \in S \subset \mathbb{N}^2$ if and only if $(i, j)$ lies in the upper half plane with respect to $\mathbf{N}(\mathcal{X})$. The set $S$ is also characterized in \eqref{The-set-S-HOT} where $(p_1, q_1)=(1,1)$, $(p_2, q_2)=(1,3)$ are the weights of $W(\mathbf{N}(\mathcal{X}))$, and $r_1$ and $r_2$ are given in \eqref{The-rk}. Since the exponents of the minimal model are $(\alpha, \beta, r, s) = (1, 1, 4, 2)$ then $r_1= 2$, $r_2 = 4$ and we obtain the set $S$ defined in the statement of the proposition.

We notice that the addition of higher order terms with respect to $\mathbf{N}(\mathcal{X})$ does not affect the characteristic directions at the origin, but if we apply the same desingularization scheme of $\theta_*=0$ of  $\mathcal{X}^{[1]}$ to $\mathcal{X}^{[2]}$ (the one corresponding to a type $\mathcal{A}$ direction), we do not obtain  polynomial fields in the local charts. In summary, $\theta_*=0$ is not a type $\mathcal{A}$ characteristic direction of $\mathcal{X}^{[2]}$. Nevertheless, fortunately we have checked that the origin of $\mathcal{X}^{[2]}$ has $\{0,\pi/2\}$ as a $\mathcal{B}$-type 2-tuple of characteristic directions so that we can compute $\eta$ for $\mathcal{X}^{[2]}$ too finishing the proof. We emphasize that the value of $\eta$ computed for $\mathcal{X}^{[2]}$ using its two different desingularizations is the same as it must be.
\end{proof}



\begin{thebibliography}{99}
\bibitem{AGGMed} {\sc A. Algaba, C. Garc\' \i a, J. Gin\'e}, {\it Geometric criterium in the center problem}, Mediterr. J. Math. {\bf 13} (2016), 2593--2611.

\bibitem{AGG} {\sc A. Algaba, C. Garc\' \i a, J. Gin\'e}, {\it Center conditions to find certain degenerate centers with characteristic directions}, Math. Comput. Simulation 215 (2024) 628--638.

\bibitem{AGR} {\sc A. Algaba, C. Garc\' \i a, M. Reyes}, {\it Characterization of a monodromic singular point of a planar vector field}, Nonlinear Anal. {\bf 74} (2011), 5402--5414.

\bibitem{AGR2} {\sc A. Algaba, C. Garc\' \i a, M. Reyes}, {\it A new algorithm for determining the monodromy of a planar differential system}, Appl. Math. Comput. {\bf 237} (2014), 419--429.
    
\bibitem{Be-Me}{\sc F.S. Berezovskaya and N.B. Medvedeva}, {\it The asymptotics of the return map of a singular point with fixed Newton diagram},     Journal of Soviet Mathematics {\bf 60}(6) (1992), 1765--1781.

\bibitem{BM} {\sc M. Brunella, M. Miari}, {\it Topological equivalence of a plane vector field with its principal part defined through Newton polyhedra}, J. Differential Equations {\bf 85} (1990), 338--366.
    
\bibitem{CZZ} {\sc H. Chen, R. Zhang, X. Zhang}, {\it New criterions on stability and order of analytic nilpotent foci}, J. Differential Equations 338 (2022) 352--371.
    
\bibitem{Dum} {\sc F. Dumortier,} {\it \ Singularities of vector fields on the plane}, J. Differential Equations {\bf 23} (1977), 53--106.

\bibitem{FLLL} {\sc W.W. Farr, C. Li, I.S. Labouriau, W.F. Langford}, {\it Degenerate Hopf-bifurcation formulas and Hilbert's 16th problem},
SIAM J. Math. Anal. 20 (1989), 13--29.

\bibitem{Ga-Gi} {\sc I.A. Garc\' \i a, J. Gin\'e},  {\it Center problem with characteristic directions and inverse integrating factors}, Commun. Nonlinear Sci. Numer. Simul. {\bf 108} (2022), 14 pp.

\bibitem{GaGi2}{\sc I.A. Garc\' \i a, J. Gin\'e}, {\it The Poincar\'e map of degenerate monodromic singularities with Puiseux inverse integrating factor}, Adv. Nonlinear Anal. {\bf 12} (2023), 20220314.

\bibitem{GaGi3}{\sc I.A. Garc\' \i a, J. Gin\'e}, {\it The linear term of the Poincar\'e map at singularities of planar vector fields}, J. Differential Equations, {\bf 396} (2024), 44--67.

\bibitem{GG-complex}{\sc I.A. Garc\' \i a, J. Gin\'e}, {\it Characterization of centers by its complex separatrices}, J. Differential Equations {\bf 442} (2025), 113506.
%
\bibitem{GGGrau} {\sc I.A. Garc\' \i a, J. Gin\'e, M. Grau}, {\it A necessary condition in the monodromy problem for analytic differential equations on the plane}, J. Symbolic Comput. {\bf 41} (2006) 943--958.

\bibitem{Ga-Li-Ma-Ma} {\sc A. Gasull, J. Llibre, V. Ma\~{n}osa, F. Ma\~{n}osas,} {\it \ The focus-centre problem for a type of degenerate system}, Nonlinearity {\bf 13} (2000), 699--729.

\bibitem{Ga-Ma-Ma} {\sc A. Gasull, V. Ma\~{n}osa, F. Ma\~{n}osas,} {\it \ Monodromy and stability of a class
    of degenerate planar critical points}, J. Differential Equations {\bf 217} (2005), 363--376.
    
\bibitem{GGL} {\sc H. Giacomini, J. Gin\'e, J. Llibre}, {\it The problem of distinguishing between a center and a focus for nilpotent and degenerate analytic systems} J. Differential Equations {\bf 227} (2006), no. 2, 406--426; Corrigendum, ibid. {\bf 232} (2007) 702.
    
\bibitem{G0} {\sc J. Gin\'e}, {\it Sufficient conditions for a center at a completely degenerate critical point}, Int. J. Bifurc. Chaos Appl. Sci. Eng. 12 (7) (2002) 1659--1666.
    
\bibitem{G} {\sc J. Gin\'e}, {\it On the centers of planar analytic differential systems}, Internat. J. Bifur. Chaos Appl. Sci. Engrg. {\bf 17} (2007), no. 9, 3061--3070.

\bibitem{G1} {\sc J. Gin\'e}, {\it On the degenerate center problem}, Internat. J. Bifur. Chaos Appl. Sci. Engrg. {\bf 21} (2011), no. 5, 1383--1392. 

\bibitem{GM} {\sc J. Gin\'e, S. Maza}, {\it The reversibility and the center problem}, Nonlinear Anal {\bf 74} (2011), no. 2, 695--704.

\bibitem{Ma} {\sc V. Ma\~{n}osa}, {\it On the center problem for degenerate singular points of planar vector fields}, Internat. J. Bifur. Chaos Appl. Sci. Engrg. {\bf 12} (2002), no. 4, 687--707.

\bibitem{Med1} {\sc N.B. Medvedeva}, {\it Principal term of the monodromy transformation of a monodromic singular point is linear},
Siberian Math. J. 33 (1992), 280--288.

\bibitem{M20}{\sc N.B. Medvedeva}, {\it The problem of distinguishing between a centre and a focus in the space of vector fields with given
Newton diagram}, Sb. Math. {\bf 211} (2020), 1399--1446.

\bibitem{RS}{\sc V. G. Romanovski and D.S. Shafer}, {\it The center and cyclicity problems: a computational
    algebra approach}. Birkh\"auser Boston, Inc., Boston, MA, 2009.
    
\bibitem{TLZ}{\sc Y. Tang, W. Li and Z. Zhang}, {\it Focus-center problem of planar degenerate system}, J. Math. Anal. Appl. {\bf 345} (2008), 934--940.    
\end{thebibliography}
\end{document}